\numberwithin{equation}{section}
\begin{document}

\title{Estimates of some integral operators with bounded variable kernels on the Hardy and weak Hardy spaces over $\mathbb R^n$}
\author{Hua Wang \footnote{E-mail address: wanghua@pku.edu.cn.}\\
\footnotesize{College of Mathematics and Econometrics, Hunan University, Changsha 410082, P. R. China}}
\date{}
\maketitle

\begin{abstract}
In this paper, we first introduce $L^{\sigma_1}$-$(\log L)^{\sigma_2}$ conditions satisfied by the variable kernels $\Omega(x,z)$ for $0\leq\sigma_1\leq1$ and $\sigma_2\geq0$. Under these new smoothness conditions, we will prove the boundedness properties of singular integral operators $T_{\Omega}$, fractional integrals $T_{\Omega,\alpha}$ and parametric Marcinkiewicz integrals $\mu^{\rho}_{\Omega}$ with variable kernels on the Hardy spaces $H^p(\mathbb R^n)$ and weak Hardy spaces $WH^p(\mathbb R^n)$. Moreover, by using the interpolation arguments, we can get some corresponding results for the above integral operators with variable kernels on Hardy--Lorentz spaces $H^{p,q}(\mathbb R^n)$ for all $p<q<\infty$. \\
MSC(2010): 42B20; 42B25; 42B30 \\
Keywords: Singular integral operators; fractional integrals; parametric Marcinkiewicz integrals; variable kernels; Hardy spaces $H^p(\mathbb R^n)$; weak Hardy spaces $WH^p(\mathbb R^n)$; Hardy--Lorentz spaces $H^{p,q}(\mathbb R^n)$.
\end{abstract}

\section{Introduction and main results}

Let $\mathbb R^n$($n\geq 2$) be the $n$-dimensional Euclidean space and $S^{n-1}$ be the unit sphere in $\mathbb R^n$ equipped with the normalized Lebesgue measure $d\sigma$. A function $\Omega(x,z)$ defined on $\mathbb R^n\times\mathbb R^n$ is said to belong to $L^{\infty}(\mathbb R^n)\times L^r(S^{n-1})$, $r\geq 1$, if it satisfies the following conditions:

$(i)$ for all $\lambda>0$ and $x,z\in\mathbb R^n$, $\Omega(x,\lambda z)=\Omega(x,z)$;

$(ii)$ $\big\|\Omega\big\|_{L^\infty(\mathbb R^n)\times L^r(S^{n-1})}:=\sup_{x\in\mathbb R^n}\left(\int_{S^{n-1}}|\Omega(x,z')|^r\,d\sigma(z')\right)^{1/r}<\infty$,

\noindent where $z'=z/{|z|}$ for any $z\in\mathbb R^n\backslash\{0\}$. Let $\Omega(x,z)\in L^{\infty}(\mathbb R^n)\times L^r(S^{n-1})$ satisfying the cancellation condition:
\begin{equation}\label{cancel}
\int_{S^{n-1}}\Omega(x,z')\,d\sigma(z')=0 \quad \mbox{for any } x\in\mathbb R^n.
\end{equation}
Then the singular integral operator with variable kernel is defined by
\begin{equation}
T_{\Omega}f(x)=\mbox{\upshape{P.V.}}\int_{\mathbb R^n}\frac{\Omega(x,x-y)}{|x-y|^n}f(y)\,dy.
\end{equation}

In 1955, Calder\'on and Zygmund \cite{cal1,cal2} investigated the $L^2$ boundedness of singular integral operators with variable kernels. They found that these operators $T_\Omega$ are closely related to the problem about second order elliptic partial differential equations with variable coefficients.  In \cite{cal1}, Calder\'on and Zygmund proved the following theorem (see also \cite{cal3}).

\newtheorem*{thma}{Theorem A}

\begin{thma}
Suppose that $\Omega(x,z)\in L^\infty(\mathbb R^n)\times L^r(S^{n-1})$ with $r>{2(n-1)}/n$, and satisfies $(\ref{cancel})$. Then there exists a constant $C>0$ independent of $f$ such that
\begin{equation*}
\big\|T_{\Omega}(f)\big\|_{L^2}\le C\big\|f\big\|_{L^2}.
\end{equation*}
\end{thma}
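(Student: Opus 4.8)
The plan is to expand the variable kernel in spherical harmonics of the second variable, which writes $T_\Omega$ as a superposition of translation--invariant singular integrals whose $L^2\!\to\!L^2$ norms are controlled by explicit Fourier multipliers, and then to sum the resulting series by balancing three classical facts about the space $\mathcal H_m$ of spherical harmonics of degree $m$ on $S^{n-1}$: the dimension count $d_m:=\dim\mathcal H_m\asymp m^{n-2}$; the multiplier decay $|\gamma_m|\le A_n\,m^{-n/2}$ described below; and the addition formula $\sum_{j=1}^{d_m}|Y_{m,j}(\theta)|^2=d_m$ valid (with $d\sigma$ normalized) for any $L^2(S^{n-1})$--orthonormal basis $\{Y_{m,j}\}_{j=1}^{d_m}$ of $\mathcal H_m$.

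Concretely I would write $\Omega(x,z')=\sum_{m\ge1}\sum_{j=1}^{d_m}a_{m,j}(x)\,Y_{m,j}(z')$, where $a_{m,j}(x)=\int_{S^{n-1}}\Omega(x,z')\overline{Y_{m,j}(z')}\,d\sigma(z')$ and the degree--$0$ term drops out by the cancellation hypothesis $(\ref{cancel})$; correspondingly $T_\Omega f=\sum_{m\ge1}\sum_j a_{m,j}(\cdot)\,T_{m,j}f$, with $T_{m,j}$ the convolution singular integral with kernel $Y_{m,j}(z/|z|)/|z|^n$. The classical formula for the Fourier transform of such a homogeneous kernel gives $\widehat{T_{m,j}f}(\xi)=\gamma_m\,Y_{m,j}(\xi/|\xi|)\,\widehat f(\xi)$ with $\gamma_m=c_n(-i)^m\,\Gamma(m/2)/\Gamma((m+n)/2)$, whence $|\gamma_m|\le A_n\,m^{-n/2}$ by Stirling's formula. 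To legitimize these manipulations I would first run the argument for $\Omega$ a finite spherical--harmonic sum and $f\in\mathcal S(\mathbb R^n)$, obtaining a bound independent of the number of terms, and then pass to general $\Omega\in L^\infty(\mathbb R^n)\times L^r(S^{n-1})$ by approximation.

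The heart of the matter is a dyadic grouping. Set $E_k=\{m:2^k\le m<2^{k+1}\}$ and $F_k=\sum_{m\in E_k}\sum_j a_{m,j}(\cdot)\,T_{m,j}f$. Cauchy--Schwarz in the pair $(m,j)$, pointwise in $x$, followed by integration, gives
\[
\|F_k\|_{L^2}^2\le\Big(\sup_{x\in\mathbb R^n}\sum_{m\in E_k}\sum_j|a_{m,j}(x)|^2\Big)\int_{\mathbb R^n}\sum_{m\in E_k}\sum_j|T_{m,j}f(x)|^2\,dx .
\]
By Plancherel's theorem and the addition formula the second factor equals $\big(\sum_{m\in E_k}|\gamma_m|^2 d_m\big)\|f\|_{L^2}^2\lesssim\big(\sum_{m\asymp2^k}m^{-2}\big)\|f\|_{L^2}^2\lesssim 2^{-k}\|f\|_{L^2}^2$; the first factor is $\sup_x\|Q_k[\Omega(x,\cdot)]\|_{L^2(S^{n-1})}^2\le\|Q_k\|_{L^r(S^{n-1})\to L^2(S^{n-1})}^2\,\|\Omega\|_{L^\infty\times L^r}^2$, where $Q_k$ is the orthogonal projection onto $\bigoplus_{m\in E_k}\mathcal H_m$. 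Since $Q_k$ has a Hermitian reproducing kernel, $\|Q_k\|_{L^r\to L^2}=\|Q_k\|_{L^2\to L^{r'}}$ with $r'=r/(r-1)$, and interpolating the elementary bounds $\|Q_k\|_{L^2\to L^2}=1$ and $\|Q_k\|_{L^2\to L^\infty}=\big(\sum_{m\in E_k}d_m\big)^{1/2}\lesssim 2^{k(n-1)/2}$ (addition formula once more) yields $\|Q_k\|_{L^r\to L^2}\lesssim 2^{k(n-1)(1/r-1/2)}$ when $r\le2$, while trivially $\|Q_k\|_{L^r\to L^2}\le1$ when $r\ge2$. Putting the two factors together,
\[
\|F_k\|_{L^2}\lesssim 2^{\,k[(n-1)(1/r-1/2)-1/2]}\,\|\Omega\|_{L^\infty\times L^r}\,\|f\|_{L^2}
\]
(the exponent being replaced by $-1/2$ when $r\ge2$), and the geometric series $\sum_{k\ge0}\|F_k\|_{L^2}$ converges precisely when $(n-1)(1/r-1/2)<1/2$, i.e. when $r>2(n-1)/n$ --- exactly the hypothesis. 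One concludes $\|T_\Omega f\|_{L^2}\le\sum_{k\ge0}\|F_k\|_{L^2}\le C\|\Omega\|_{L^\infty\times L^r}\|f\|_{L^2}$.

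I expect the main obstacle to be the quantitative bookkeeping rather than any single hard lemma: the scheme closes only because of the sharp decay $|\gamma_m|\le A_n m^{-n/2}$ (a weaker exponent leaves the $k$--series divergent), and the point of the restriction $r>2(n-1)/n$ is precisely to force the projection norms $\|Q_k\|_{L^r\to L^2}$ to grow more slowly than $2^{k/2}$, thereby compensating the gain $2^{-k/2}$ coming from the multiplier decay against the growth $d_m\asymp m^{n-2}$. A secondary, more routine difficulty is the justification of the termwise manipulations --- convergence of the spherical--harmonic expansion of $\Omega$ in the relevant topology and the interchange of summation with $T_\Omega$ and with integration --- which is handled by the reduction to finite sums and Schwartz $f$ followed by approximation, the case $r<2$ needing slightly more care since then $\Omega(x,\cdot)$ need not belong to $L^2(S^{n-1})$.
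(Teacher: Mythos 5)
The paper does not prove Theorem~A; it quotes it from Calder\'on--Zygmund \cite{cal1,cal3}, and your proposal is exactly the classical argument given there: spherical--harmonic expansion in the second variable, the multiplier decay $|\gamma_m|\le A_n m^{-n/2}$, the addition formula, and interpolation of the dyadic projection norms between $L^2\to L^2$ and $L^2\to L^\infty$. The bookkeeping is correct --- $(n-1)(1/r-1/2)<1/2$ is equivalent to $r>2(n-1)/n$ --- so the proposal is a sound reconstruction of the cited proof, with the limiting/approximation issues correctly identified as the only remaining routine work.
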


For $0<\alpha<n$ and $\Omega(x,z)\in L^{\infty}(\mathbb R^n)\times L^r(S^{n-1})$ with $r\geq 1$. Then the fractional integral operator with variable kernel is defined as follows:
\begin{equation}
T_{\Omega,\alpha}f(x)=\int_{\mathbb R^n}\frac{\Omega(x,x-y)}{|x-y|^{n-\alpha}}f(y)\,dy.
\end{equation}
In 1971, Muckenhoupt and Wheeden \cite{muckenhoupt} studied the $L^p$--$L^q$ boundedness of $T_{\Omega,\alpha}$ when $0<\alpha<n$, and obtained the following result (here, and in what follows we shall denote the conjugate exponent of $p>1$ by $p'=p/{(p-1)}$):

\newtheorem*{thmb}{Theorem B}

\begin{thmb}
Let $0<\alpha<n$, $1<p<n/{\alpha}$ and $1/q=1/p-\alpha/n$. Suppose that $\Omega(x,z)\in L^\infty(\mathbb R^n)\times L^r(S^{n-1})$ with $r>p'$, then there exists a constant $C>0$ independent of $f$ such that
\begin{equation*}
\big\|T_{\Omega,\alpha}(f)\big\|_{L^q}\le C\big\|f\big\|_{L^p}.
\end{equation*}
\end{thmb}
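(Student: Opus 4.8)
\emph{Proof proposal.} The plan is to reduce Theorem~B to a pointwise bound of Hedberg type and then invoke the Hardy--Littlewood maximal theorem. Since $n-\alpha<n$ the kernel $|x-y|^{-(n-\alpha)}$ is locally integrable, so no principal value is involved and it suffices to estimate $\int_{\mathbb R^n}\frac{|\Omega(x,x-y)|}{|x-y|^{n-\alpha}}|f(y)|\,dy$; we may assume $f\ge 0$, $f\not\equiv 0$ and $\|f\|_{L^p}<\infty$. From $r>p'$ and $1<p<\infty$ one gets $1<r'<p$, so I would fix an auxiliary exponent $a$ with $r'<a<p$ and let $t>1$ be defined by $1/r+1/a+1/t=1$ (possible precisely because $a>r'$). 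First I would split $\mathbb R^n\setminus\{x\}$ into the dyadic annuli $E_k(x)=\{y:2^k\le|x-y|<2^{k+1}\}$, $k\in\mathbb Z$, and bound the contribution of each $E_k(x)$ by Hölder's inequality with exponents $r,a,t$ applied to $|\Omega(x,x-y)|$, $f(y)$ and $1$. Passing to polar coordinates and using that $\Omega(x,\cdot)$ is homogeneous of degree zero gives $\bigl(\int_{E_k(x)}|\Omega(x,x-y)|^r\,dy\bigr)^{1/r}\le C\,2^{kn/r}\|\Omega\|_{L^\infty\times L^r}$ and $\|1\|_{L^t(E_k(x))}\le C\,2^{kn/t}$, and since $-(n-\alpha)+n/r+n/t=\alpha-n/a$ this produces
\begin{equation*}
\int_{E_k(x)}\frac{|\Omega(x,x-y)|}{|x-y|^{n-\alpha}}f(y)\,dy\le C\|\Omega\|_{L^\infty\times L^r}\,2^{k(\alpha-n/a)}\Bigl(\int_{E_k(x)}f(y)^a\,dy\Bigr)^{1/a}.
\end{equation*}

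Next I would sum over $k\in\mathbb Z$ after cutting the series at an arbitrary integer $j$. For $k\le j$ I would use $\bigl(\int_{E_k(x)}f^a\bigr)^{1/a}\le C\,2^{kn/a}\bigl(M(f^a)(x)\bigr)^{1/a}$, where $M$ is the Hardy--Littlewood maximal operator; since $\alpha>0$ one has $\sum_{k\le j}2^{k\alpha}\le C\,2^{j\alpha}$, so the small-scale part is at most $C\|\Omega\|_{L^\infty\times L^r}\,2^{j\alpha}\bigl(M(f^a)(x)\bigr)^{1/a}$. For $k>j$ I would instead use $\bigl(\int_{E_k(x)}f^a\bigr)^{1/a}\le|E_k(x)|^{1/a-1/p}\|f\|_{L^p}\le C\,2^{kn(1/a-1/p)}\|f\|_{L^p}$ (valid since $a<p$); since $\alpha<n/p$ one has $\sum_{k>j}2^{k(\alpha-n/p)}\le C\,2^{j(\alpha-n/p)}$, so the large-scale part is at most $C\|\Omega\|_{L^\infty\times L^r}\,2^{j(\alpha-n/p)}\|f\|_{L^p}$. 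Writing $A=\bigl(M(f^a)(x)\bigr)^{1/a}$ and $B=\|f\|_{L^p}$, this gives
\begin{equation*}
\bigl|T_{\Omega,\alpha}f(x)\bigr|\le C\|\Omega\|_{L^\infty\times L^r}\bigl(2^{j\alpha}A+2^{j(\alpha-n/p)}B\bigr)\qquad\text{for every }j\in\mathbb Z,
\end{equation*}
and I would then choose $j$ so that $2^j$ is comparable to $(B/A)^{p/n}$ --- legitimate for a.e. $x$, since $0<A<\infty$ a.e. and $0<B<\infty$. Balancing the two terms and using $1-\alpha p/n=p/q$ yields the Hedberg-type estimate $|T_{\Omega,\alpha}f(x)|\le C\|\Omega\|_{L^\infty\times L^r}\,A^{p/q}B^{1-p/q}$.

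Finally I would raise this to the power $q$, integrate in $x$, and use $1-p/q=\alpha p/n$ together with $A^p=M(f^a)(x)^{p/a}$ to obtain
\begin{equation*}
\bigl\|T_{\Omega,\alpha}f\bigr\|_{L^q}^q\le C\|\Omega\|_{L^\infty\times L^r}^q\,\|f\|_{L^p}^{q-p}\,\bigl\|M(f^a)\bigr\|_{L^{p/a}}^{p/a}.
\end{equation*}
Since $a<p$ we have $p/a>1$, so the Hardy--Littlewood maximal theorem gives $\|M(f^a)\|_{L^{p/a}}\le C\|f^a\|_{L^{p/a}}=C\|f\|_{L^p}^a$; substituting this back yields $\|T_{\Omega,\alpha}f\|_{L^q}\le C\|\Omega\|_{L^\infty\times L^r}\|f\|_{L^p}$, which is exactly the assertion of Theorem~B. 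The hypothesis $r>p'$ enters in one place only, the choice of $a$: one needs $a>r'$ so that Hölder on the sphere leaves a factor for $\Omega(x,\cdot)\in L^r(S^{n-1})$, and $a<p$ so that $M$ is bounded on $L^{p/a}$ in the last step. I expect the real obstacle to be the balancing argument of the previous paragraph --- separating small from large scales, controlling the former by a maximal function at the lowered exponent $a$ (which is what makes the final $L^q$ bound close) and the latter by the bare $L^p$ norm, then optimizing over $j$; the annular Hölder estimate and the summations are routine.
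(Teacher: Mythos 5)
Your proposal is correct. Note first that the paper itself offers no proof of Theorem B: it is quoted verbatim from Muckenhoupt and Wheeden (1971), whose original argument establishes more general weighted norm inequalities for fractional integrals with rough kernels. So there is nothing in the paper to compare against line by line; what you have written is a self-contained, elementary proof of the unweighted statement by the Hedberg method, and every step checks out. The annular H\"older estimate with the triple of exponents $(r,a,t)$ is right: the homogeneity of degree zero of $\Omega(x,\cdot)$ in the second variable together with the uniform-in-$x$ bound $\sup_x\|\Omega(x,\cdot)\|_{L^r(S^{n-1})}<\infty$ is exactly what makes $\bigl(\int_{E_k(x)}|\Omega(x,x-y)|^r\,dy\bigr)^{1/r}\le C2^{kn/r}$ work, and the exponent bookkeeping $-(n-\alpha)+n/r+n/t=\alpha-n/a$ is correct. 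The two tail sums converge for the right reasons ($\alpha>0$ on small scales, $\alpha<n/p$ on large scales), the optimization in $j$ gives the pointwise bound $|T_{\Omega,\alpha}f(x)|\le C\,(M(f^a)(x))^{(1/a)(p/q)}\|f\|_{L^p}^{1-p/q}$, and the final application of the maximal theorem on $L^{p/a}$ with $p/a>1$ closes the argument. You also correctly identify where $r>p'$ is used and why strict inequality is essential: if $r=p'$ one would be forced to take $a=p$, and the maximal operator is not bounded on $L^1$. The only stylistic remark is that your reduction to $f\ge0$ and the interchange of sum and integral deserve the one-line justification (monotone convergence applied to $T_{|\Omega|,\alpha}|f|$) that you implicitly assume; with that said, the proof is complete and is arguably simpler than the weighted machinery of the cited source.
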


For $0<\rho<n$, in 1960, H\"ormander \cite{hor} defined the parametric Marcinkiewicz integral operator $\mu^{\rho}_{\Omega}$ of higher dimension as follows.
\begin{equation*}
\widetilde\mu^{\rho}_{\Omega}(f)(x)=
\left(\int_0^\infty\big|\widetilde F^{\rho}_{\Omega,t}(x)\big|^2\frac{dt}{t^{2\rho+1}}\right)^{1/2},
\end{equation*}
where
\begin{equation*}
\widetilde F^{\rho}_{\Omega,t}(x)=\int_{|x-y|\le t}\frac{\Omega(x-y)}{|x-y|^{n-\rho}}f(y)\,dy.
\end{equation*}
In this paper, we will consider the parametric Marcinkiewicz integral operator with variable kernel which is given by the following expression
\begin{equation}
\mu^{\rho}_{\Omega}(f)(x)=\left(\int_0^\infty\big|F^{\rho}_{\Omega,t}(x)\big|^2\frac{dt}{t^{2\rho+1}}\right)^{1/2},
\end{equation}
where
\begin{equation}
F^{\rho}_{\Omega,t}(x)=\int_{|x-y|\le t}\frac{\Omega(x,x-y)}{|x-y|^{n-\rho}}f(y)\,dy.
\end{equation}
When $\rho=1$, we shall denote $\mu^1_\Omega$ simply by $\mu_\Omega$, which was first defined and studied by Ding et al.in \cite{ding1,ding2} (for the convolution kernel case, see \cite{stein}).

Let $0\leq\sigma_1\leq1$ and $\sigma_2\geq0$. We say that $\Omega(x,z)$ satisfies the $L^{\sigma_1}$-$(\log L)^{\sigma_2}$ condition, if there exists an absolute constant $C>0$ such that
\begin{equation}\label{L-logL}
\sup_{x\in\mathbb {R}^n}\big|\Omega(x,y')-\Omega(x,z')\big|\leq C\big|y'-z'\big|^{\sigma_1}\cdot\frac{1}{\big(\log\frac{1}{|y'-z'|}\big)^{\sigma_2}}.
\end{equation}
holds uniformly in $y',z'\in S^{n-1}$. If $\sigma_1=0$ and $\sigma_2>0$, this new condition reduces to the logarithmic type Lipschitz condition, which was introduced and studied by Lee and Rim in \cite{lee}, when the kernel $\Omega$ does not depend on the first variable. If $0<\sigma_1\leq1$ and $\sigma_2=0$, this new condition is just the Lipschitz condition of order $\sigma_1$, which is actually stronger than $L^{\sigma_1}$-$(\log L)^{\sigma_2}$ condition assumed on the variable kernel $\Omega(x,z)$. In addition, it is obvious that if $\Omega(x,z)$ satisfies (\ref{L-logL}) for some $0\leq\sigma_1\leq1$ and $\sigma_2\geq0$, then $\Omega(x,z)$ is a bounded function in $\mathbb R^n\times\mathbb R^n$ and $\Omega(x,z)\in L^{\infty}(\mathbb R^n)\times L^r(S^{n-1})$ for any $1\leq r<\infty$.

The main purpose of this paper is to discuss the boundedness properties of $T_\Omega$, $T_{\Omega,\alpha}$ and $\mu^{\rho}_\Omega$ on the Hardy spaces $H^p(\mathbb R^n)$ and weak Hardy spaces $WH^p(\mathbb R^n)$, under the new $L^{\sigma_1}$-$(\log L)^{\sigma_2}$ condition (\ref{L-logL}) imposed on the variable kernel $\Omega(x,z)$. We now formulate our main results as follows.

\newtheorem{theorem}{Theorem}[section]

\begin{theorem}\label{mainthm:1}
Let $n\geq 2$, $n/{(n+1)}<p\leq1$, and $\Omega(x,z)$ satisfy $(\ref{cancel})$ and the $L^{\sigma_1}$-$(\log L)^{\sigma_2}$ condition $(\ref{L-logL})$. Then $T_{\Omega}$ is bounded from $H^p(\mathbb R^n)$ into $L^p(\mathbb R^n)$ provided that $\sigma_1$ and $\sigma_2$ satisfy either of the following\\
$(i)$ $\sigma_1=n(1/p-1)$ and $\sigma_2>1/p$;\\
$(ii)$ $n(1/p-1)<\sigma_1\leq1$ and $\sigma_2\geq0$.
\end{theorem}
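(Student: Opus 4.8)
The plan is to argue through the atomic decomposition of $H^p(\mathbb R^n)$. Since $n/(n+1)<p\le 1$, any $f\in H^p(\mathbb R^n)$ can be written as $f=\sum_k\lambda_k a_k$ with $\sum_k|\lambda_k|^p\le C\|f\|_{H^p}^p$, where each $a_k$ is a $(p,\infty)$-atom supported on a ball $B_k$, with $\|a_k\|_{L^\infty}\le|B_k|^{-1/p}$ and $\int a_k=0$; note that $p>n/(n+1)$ forces $\lfloor n(1/p-1)\rfloor=0$, so only the zeroth moment need vanish. Since $\|\cdot\|_{L^p}^p$ is subadditive for $p\le 1$, it will suffice to establish a bound $\|T_\Omega a\|_{L^p}\le C$ uniform over all such atoms $a$; summing then yields $\|T_\Omega f\|_{L^p}^p\le C\sum_k|\lambda_k|^p\le C\|f\|_{H^p}^p$, the passage from atoms to $H^p$ being standard. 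I would also remark at the outset that, although $T_\Omega$ is not translation invariant, the supremum over $x$ built into $(\ref{L-logL})$ makes all the kernel estimates below uniform in the first variable; moreover $(\ref{L-logL})$ guarantees $\Omega\in L^\infty(\mathbb R^n)\times L^r(S^{n-1})$ for every finite $r$, so in particular Theorem A applies to $T_\Omega$.

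So fix an atom $a$ on $B=B(x_0,r)$ and split $\|T_\Omega a\|_{L^p}^p=\int_{4B}|T_\Omega a(x)|^p\,dx+\int_{(4B)^c}|T_\Omega a(x)|^p\,dx$. For the local piece, H\"older's inequality with exponent $2/p>1$ and the $L^2$-boundedness of $T_\Omega$ (Theorem A) give
\[
\int_{4B}|T_\Omega a(x)|^p\,dx\le\Big(\int_{4B}|T_\Omega a(x)|^2\,dx\Big)^{p/2}|4B|^{1-p/2}\le C\big(|B|^{1/2-1/p}\big)^p|B|^{1-p/2}=C,
\]
an absolute constant, since $\|a\|_{L^2}\le|B|^{1/2-1/p}$.

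For the global piece I would use the cancellation of $a$ to write, for $x\notin 4B$,
\[
T_\Omega a(x)=\int_{B}\Big(\frac{\Omega(x,x-y)}{|x-y|^n}-\frac{\Omega(x,x-x_0)}{|x-x_0|^n}\Big)a(y)\,dy,
\]
and bound the kernel difference by $\dfrac{|\Omega(x,(x-y)')-\Omega(x,(x-x_0)')|}{|x-y|^n}+\|\Omega\|_\infty\Big|\dfrac{1}{|x-y|^n}-\dfrac{1}{|x-x_0|^n}\Big|$. For $y\in B$ and $x\notin 4B$ one has $|x-y|\sim|x-x_0|$, $|(x-y)'-(x-x_0)'|\le 2r/|x-x_0|\le 1/2$, and, by the mean value theorem, $\big||x-y|^{-n}-|x-x_0|^{-n}\big|\le Cr/|x-x_0|^{n+1}$. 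Since $t\mapsto t^{\sigma_1}(\log(1/t))^{-\sigma_2}$ is nondecreasing on $(0,1)$, condition $(\ref{L-logL})$ gives
\[
\big|\Omega(x,(x-y)')-\Omega(x,(x-x_0)')\big|\le C\Big(\frac{r}{|x-x_0|}\Big)^{\sigma_1}\Big(\log\frac{|x-x_0|}{2r}\Big)^{-\sigma_2}.
\]
Then, using $\int_B|a|\le|B|^{1-1/p}=Cr^{n-n/p}$ and splitting $(4B)^c$ into the annuli $2^jr\le|x-x_0|<2^{j+1}r$, $j\ge 2$ (on which $\log(|x-x_0|/2r)\ge\log 2$), the contribution of the ``$|x-y|^{-n}$'' term sums to a constant multiple of $\sum_{j\ge2}2^{-j(p(n+1)-n)}$, which converges precisely because $p>n/(n+1)$; and the contribution of the ``$\Omega$'' term sums to a constant multiple of $\sum_{j\ge 2}2^{-j(p(n+\sigma_1)-n)}(j\log2)^{-p\sigma_2}$. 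In case $(ii)$, $\sigma_1>n(1/p-1)$ makes $p(n+\sigma_1)-n>0$, so this series converges for every $\sigma_2\ge0$; in case $(i)$, $\sigma_1=n(1/p-1)$ makes $p(n+\sigma_1)-n=0$, and the series reduces to $\sum_{j\ge2}j^{-p\sigma_2}$, which converges exactly when $p\sigma_2>1$, i.e. $\sigma_2>1/p$. Combining the local and global bounds gives $\|T_\Omega a\|_{L^p}\le C$ uniformly, as required.

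The main obstacle, and the point where the two cases genuinely separate, is the handling of the logarithmic factor in the global estimate: checking the monotonicity needed to evaluate the smoothness modulus at $t=2r/|x-x_0|$, arranging (by enlarging the ``local'' ball from $2B$ to $4B$) that $\log(|x-x_0|/2r)$ stays bounded away from $0$ on every remaining annulus, and carrying out the borderline summation in case $(i)$ that produces the sharp threshold $\sigma_2>1/p$. Everything else---the atomic reduction, the $L^2$-based local estimate, and the size estimate of $|x-y|^{-n}-|x-x_0|^{-n}$ that yields the restriction $p>n/(n+1)$---is routine.
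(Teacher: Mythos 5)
Your proposal is correct and follows essentially the same route as the paper: atomic decomposition, Hölder plus the $L^2$ bound of Theorem A on the enlarged ball, and cancellation of the atom combined with the mean value theorem and the $L^{\sigma_1}$-$(\log L)^{\sigma_2}$ modulus on the complement, with the annular summation separating the borderline case $\sigma_1=n(1/p-1)$, $p\sigma_2>1$ from the geometric case. The only (harmless) differences are cosmetic — $(p,\infty)$-atoms on balls versus the paper's $(p,2,0)$-atoms on cubes, and your explicit verification that $t\mapsto t^{\sigma_1}(\log(1/t))^{-\sigma_2}$ is increasing, a point the paper uses implicitly.
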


\begin{theorem}\label{mainthm:2}
Let $0<\alpha<n$, $n/{(n+1)}<p\leq1$, $1/q=1/p-\alpha/n$ and $\Omega(x,z)$ satisfy the $L^{\sigma_1}$-$(\log L)^{\sigma_2}$ condition $(\ref{L-logL})$. Then $T_{\Omega,\alpha}$ is bounded from $H^p(\mathbb R^n)$ into $L^q(\mathbb R^n)$ provided that $\sigma_1$ and $\sigma_2$ satisfy either of the following\\
$(i)$ $\sigma_1=n(1/q-1)+\alpha$ and $\sigma_2>1/q$; \\
$(ii)$ $n(1/q-1)+\alpha<\sigma_1\leq1$ and $\sigma_2\geq0$.
\end{theorem}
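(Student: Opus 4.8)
The plan is to pass to the atomic decomposition of $H^{p}(\mathbb{R}^{n})$. Since $n/(n+1)<p\le 1$ makes the required number of vanishing moments $\lfloor n(1/p-1)\rfloor$ equal to $0$, it suffices to prove a uniform bound $\|T_{\Omega,\alpha}a\|_{L^{q}}\le C$, with $C$ depending only on $n,p,\alpha$ and the constant in $(\ref{L-logL})$, for every $(p,\infty)$-atom $a$: that is, $a$ supported in a ball $B=B(x_{0},r)$, $\|a\|_{L^{\infty}}\le|B|^{-1/p}$, and $\int_{\mathbb{R}^{n}}a=0$. The passage from this to the boundedness of $T_{\Omega,\alpha}$ on $H^{p}$ is then routine: writing $f=\sum_{j}\lambda_{j}a_{j}$ with $\sum_{j}|\lambda_{j}|^{p}\lesssim\|f\|_{H^{p}}^{p}$, when $q\le1$ the $L^{q}$ quasi-triangle inequality together with $q>p$ gives $\|T_{\Omega,\alpha}f\|_{L^{q}}^{q}\le\sum_{j}|\lambda_{j}|^{q}\|T_{\Omega,\alpha}a_{j}\|_{L^{q}}^{q}\lesssim\big(\sum_{j}|\lambda_{j}|^{p}\big)^{q/p}\lesssim\|f\|_{H^{p}}^{q}$, whereas when $q>1$ the triangle inequality together with $p\le1$ gives $\|T_{\Omega,\alpha}f\|_{L^{q}}\le\sum_{j}|\lambda_{j}|\,\|T_{\Omega,\alpha}a_{j}\|_{L^{q}}\lesssim\big(\sum_{j}|\lambda_{j}|^{p}\big)^{1/p}\lesssim\|f\|_{H^{p}}$ (one first argues on finite sums of atoms and then extends by density, using the a priori $L^{p_{0}}$--$L^{q_{0}}$ boundedness supplied by Theorem~B). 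Finally, since $(\ref{L-logL})$ holds uniformly in $x$, the substitution $\Omega(x,z)\mapsto\Omega(rx+x_{0},z)$ preserves all hypotheses, so after translating and dilating we may normalize to $B=B(0,1)$ and $\|a\|_{L^{\infty}}\lesssim1$.

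To bound $\|T_{\Omega,\alpha}a\|_{L^{q}}$ I would split $\mathbb{R}^{n}$ into a fixed ball $B^{\ast}$ containing $B$, taken large enough for the geometric estimates below, and its complement. On $B^{\ast}$: since $\Omega$ satisfies $(\ref{L-logL})$ it lies in $L^{\infty}(\mathbb{R}^{n})\times L^{r}(S^{n-1})$ for every $r<\infty$, so Theorem~B applies with any exponents $p_{0}\in(1,n/\alpha)$ and $q_{0}$ given by $1/q_{0}=1/p_{0}-\alpha/n$; since $p_{0}>1\ge p$ we have $q_{0}>q$, and H\"older's inequality on $B^{\ast}$ followed by Theorem~B yields
\begin{equation*}
\|T_{\Omega,\alpha}a\|_{L^{q}(B^{\ast})}\le|B^{\ast}|^{1/q-1/q_{0}}\,\|T_{\Omega,\alpha}a\|_{L^{q_{0}}}\le C\|a\|_{L^{p_{0}}}\le C\|a\|_{L^{\infty}}|B|^{1/p_{0}}\le C.
\end{equation*}

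The heart of the matter is the estimate on $(B^{\ast})^{c}$. There, using $\int a=0$ to subtract the pivot $\Omega(x,x')|x|^{-(n-\alpha)}$ (which is integrable against $a$), I would write
\begin{equation*}
T_{\Omega,\alpha}a(x)=\int_{B}\Big[\frac{\Omega(x,(x-y)')}{|x-y|^{\,n-\alpha}}-\frac{\Omega(x,x')}{|x|^{\,n-\alpha}}\Big]a(y)\,dy,
\end{equation*}
and split the bracket as $\Omega(x,(x-y)')\big(|x-y|^{-(n-\alpha)}-|x|^{-(n-\alpha)}\big)+|x|^{-(n-\alpha)}\big(\Omega(x,(x-y)')-\Omega(x,x')\big)$. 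Boundedness of $\Omega$ and the mean value theorem, using $|x-y|\approx|x|$ for $x\in(B^{\ast})^{c}$, $y\in B$, bound the first term by $C|y|\,|x|^{-(n+1-\alpha)}$. For the second, the elementary estimate $|(x-y)'-x'|\le 2|y|/|x-y|\le C|y|/|x|$, which is $\le1/2$ on $(B^{\ast})^{c}$, allows us to invoke $(\ref{L-logL})$ and then, using $|y|<1$, to obtain $|\Omega(x,(x-y)')-\Omega(x,x')|\le C|y|^{\sigma_{1}}|x|^{-\sigma_{1}}(\log|x|)^{-\sigma_{2}}$. Integrating in $y$ against $a$ and using $\|a\|_{L^{\infty}}\lesssim1$, $\int_{B}|y|^{\sigma_{1}}\,dy\lesssim1$ gives
\begin{equation*}
|T_{\Omega,\alpha}a(x)|\le\frac{C}{|x|^{\,n+1-\alpha}}+\frac{C}{|x|^{\,n-\alpha+\sigma_{1}}(\log|x|)^{\sigma_{2}}},\qquad x\in(B^{\ast})^{c}.
\end{equation*}
Raising to the power $q$ and integrating over $(B^{\ast})^{c}$, the first term is integrable precisely when $(n+1-\alpha)q>n$, which is equivalent to $p>n/(n+1)$ --- our standing hypothesis. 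For the second term, in polar coordinates the integral is comparable to $\int_{R_{0}}^{\infty}r^{\,n-1-(n-\alpha+\sigma_{1})q}(\log r)^{-\sigma_{2}q}\,dr$: in case $(ii)$ the hypothesis $\sigma_{1}>n(1/q-1)+\alpha$ makes the exponent of $r$ strictly less than $-1$, so the integral converges for every $\sigma_{2}\ge0$; in case $(i)$ the equality $\sigma_{1}=n(1/q-1)+\alpha$ makes the exponent of $r$ equal to $-1$, so the integral reduces to $\int_{R_{0}}^{\infty}r^{-1}(\log r)^{-\sigma_{2}q}\,dr$, which converges exactly when $\sigma_{2}q>1$, i.e.\ $\sigma_{2}>1/q$ --- again exactly the assumed condition. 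Combining with the estimate on $B^{\ast}$ we get $\|T_{\Omega,\alpha}a\|_{L^{q}}\le C$, completing the proof.

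I expect the genuine obstacle to be this last, global, estimate: extracting from the $L^{\sigma_{1}}$-$(\log L)^{\sigma_{2}}$ smoothness the sharp decay $|x|^{-(n-\alpha+\sigma_{1})}(\log|x|)^{-\sigma_{2}}$ and then matching the convergence of the resulting integral against the two threshold regimes. The delicate point is the borderline case $(i)$, where the polynomial decay is exactly critical, $(n-\alpha+\sigma_{1})q=n$, so that only the logarithmic gain secures integrability; this is precisely why the stronger hypothesis $\sigma_{2}>1/q$ must be imposed (and used) there.
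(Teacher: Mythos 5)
Your proposal is correct and follows essentially the same route as the paper's proof: atomic decomposition, H\"older plus Theorem~B on an enlarged cube/ball, and on the complement the vanishing moment, the mean value theorem, and the $L^{\sigma_1}$-$(\log L)^{\sigma_2}$ condition yielding the pointwise bound $|x|^{-(n+1-\alpha)}+|x|^{-(n-\alpha+\sigma_1)}(\log|x|)^{-\sigma_2}$, with the same two-case analysis of the resulting tail integral. The only differences are cosmetic: you normalize to $L^\infty$-atoms on the unit ball by scaling (the paper carries $r_Q$ through with $(p,p_1,0)$-atoms) and you evaluate the tail integral directly rather than via the paper's dyadic annular sum.
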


\begin{theorem}\label{mainthm:3}
Let $1\leq \rho<n$, $n/{(n+1)}<p\leq1$, and $\Omega(x,z)$ satisfy $(\ref{cancel})$ and the $L^{\sigma_1}$-$(\log L)^{\sigma_2}$ condition $(\ref{L-logL})$. Then $\mu^{\rho}_{\Omega}$ is bounded from $H^p(\mathbb R^n)$ into $L^p(\mathbb R^n)$ provided that $\sigma_1$ and $\sigma_2$ satisfy either of the following\\
$(i)$ $\sigma_1=n(1/p-1)$ and $\sigma_2>1/p$; \\
$(ii)$ $n(1/p-1)<\sigma_1\leq1$ and $\sigma_2\geq0$.
\end{theorem}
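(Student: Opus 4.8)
By a standard argument (using the $L^{2}$-boundedness of $\mu^{\rho}_{\Omega}$ to pass from atoms to general elements of $H^{p}\cap L^{2}$), it suffices to produce $C>0$, independent of the atom, with $\|\mu^{\rho}_{\Omega}a\|_{L^{p}}\le C$ for every $(p,\infty)$-atom $a$, i.e.\ $\operatorname{supp}a\subset B=B(x_{0},r)$, $\|a\|_{L^{\infty}}\le|B|^{-1/p}$, $\int_{\mathbb{R}^{n}}a=0$ (only the vanishing of the zeroth moment is needed, since $n/(n+1)<p\le1$). The required $L^{2}$-bound is itself available: $\Omega$ satisfying (\ref{L-logL}) lies in $L^{\infty}(\mathbb{R}^{n})\times L^{r}(S^{n-1})$ for every $r$, so the $L^{2}$-boundedness theorem for $\mu_{\Omega}$ of Ding et al.\ \cite{ding1,ding2}, together with its routine extension to the parametric range $1\le\rho<n$ (both proved through Plancherel and the size/cancellation (\ref{cancel}) of the kernel), applies. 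Now split $\int_{\mathbb{R}^{n}}|\mu^{\rho}_{\Omega}a|^{p}=\int_{2B}+\int_{(2B)^{c}}$. On $2B$, Hölder's inequality and the $L^{2}$-bound give $\int_{2B}|\mu^{\rho}_{\Omega}a|^{p}\le|2B|^{1-p/2}\|\mu^{\rho}_{\Omega}a\|_{L^{2}}^{p}\le C|B|^{1-p/2}\|a\|_{L^{2}}^{p}\le C|B|^{1-p/2}|B|^{p(1/2-1/p)}=C$.

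For $x\notin2B$ set $\delta=|x-x_{0}|>2r$. Since $\operatorname{supp}a$ lies at distance $\ge\delta-r$ from $x$, $F^{\rho}_{\Omega,t}(x)=0$ for $t<\delta-r$; and for $t\ge\delta-r$ pure algebra (adding and subtracting the kernel value at $y=x_{0}$) gives
\begin{equation*}
F^{\rho}_{\Omega,t}(x)=\int_{B}\Big[\frac{\Omega(x,x-y)}{|x-y|^{n-\rho}}-\frac{\Omega(x,x-x_{0})}{\delta^{n-\rho}}\Big]a(y)\,\chi_{\{|x-y|\le t\}}\,dy+\frac{\Omega(x,x-x_{0})}{\delta^{n-\rho}}\,(a*\chi_{B(0,t)})(x),
\end{equation*}
and by $\int_{B}a=0$ the factor $(a*\chi_{B(0,t)})(x)$ vanishes unless $t\in(\delta-r,\delta+r)$. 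Call the two summands the \emph{regular} and the \emph{jump} part. For the regular part I would use Minkowski's integral inequality, the mean value theorem for the radial factor ($\lesssim r\,\delta^{-(n-\rho+1)}$), the bound $|(x-y)'-(x-x_{0})'|\lesssim r/\delta$ inserted into (\ref{L-logL}) for the spherical factor ($\lesssim\delta^{-(n-\rho)}(r/\delta)^{\sigma_{1}}(\log(\delta/r))^{-\sigma_{2}}$), and $\big(\int_{|x-y|}^{\infty}t^{-2\rho-1}\,dt\big)^{1/2}\approx\delta^{-\rho}$, obtaining a contribution to $\mu^{\rho}_{\Omega}a(x)$ of at most
\begin{equation*}
C\,\|a\|_{L^{\infty}}|B|\Big[\frac{r}{\delta^{n+1}}+\frac{1}{\delta^{n}}\Big(\frac{r}{\delta}\Big)^{\sigma_{1}}\frac{1}{(\log(\delta/r))^{\sigma_{2}}}\Big].
\end{equation*}
Raising to the $p$-th power, integrating over $(2B)^{c}$ in polar coordinates about $x_{0}$, and rescaling $\delta=rs$, one is reduced to the convergent integral $\int_{2}^{\infty}s^{n-1-p(n+1)}\,ds$ (using $p>n/(n+1)$) and to $\int_{2}^{\infty}s^{n-1-p(n+\sigma_{1})}(\log s)^{-p\sigma_{2}}\,ds$; the latter converges precisely when $\sigma_{1}>n(1/p-1)$, or when $\sigma_{1}=n(1/p-1)$ and $\sigma_{2}>1/p$ — i.e.\ exactly under hypotheses $(i)$--$(ii)$.

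The jump part is the crux, and is the feature that makes $\mu^{\rho}_{\Omega}$ genuinely harder than the operators of Theorems \ref{mainthm:1}--\ref{mainthm:2}: for $t\in(\delta-r,\delta+r)$ one has $(a*\chi_{B(0,t)})(x)=\int_{B\cap B(x,t)}a$, so the jump part contributes $\mathcal{E}(x):=\frac{|\Omega(x,x-x_{0})|}{\delta^{n-\rho}}\big(\int_{\delta-r}^{\delta+r}|(a*\chi_{B(0,t)})(x)|^{2}\,\frac{dt}{t^{2\rho+1}}\big)^{1/2}$ to $\mu^{\rho}_{\Omega}a(x)$. The crude pointwise estimate (from $|(a*\chi_{B(0,t)})(x)|\le\|a\|_{L^{\infty}}|B|$ on a $t$-interval of length $2r$) only yields $\mathcal{E}(x)\lesssim\|a\|_{L^{\infty}}|B|^{1/2}r^{1/2}\delta^{-(n+1/2)}$ and is too lossy. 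Instead my plan is to decompose $(2B)^{c}=\bigcup_{j\ge1}A_{j}$ with $A_{j}=2^{j+1}B\setminus2^{j}B$ (on $A_{j}$ only $t\approx2^{j}r$ is active), apply Hölder in $x$ on each $A_{j}$ (legitimate since $p<2$) to reduce $\int_{A_{j}}\mathcal{E}^{p}$ to a power of $\int_{2^{j}r}^{2^{j+1}r}\|a*\chi_{B(0,t)}\|_{L^{2}(\mathbb{R}^{n})}^{2}\,dt$, and evaluate the latter by Plancherel using the Bessel-type decay $|\widehat{\chi_{B(0,t)}}(\xi)|\lesssim\min\{t^{n},\,t^{(n-1)/2}|\xi|^{-(n+1)/2}\}$ of the Fourier transform of a ball together with the mean-zero bound $|\widehat{a}(\xi)|\lesssim\|a\|_{L^{1}}\min\{1,r|\xi|\}$, then sum the resulting geometric series over $j$. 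Making this summation converge throughout the stated range of $p$ is the main technical obstacle; it is precisely the point at which the sharp truncation $|x-y|\le t$ in $F^{\rho}_{\Omega,t}$ costs the most, and it has no analogue in the proofs of Theorems \ref{mainthm:1} and \ref{mainthm:2}.
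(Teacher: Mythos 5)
Your estimate of the local piece and of what you call the \emph{regular} part coincides with the paper's own proof: the paper works with $(p,2,0)$-atoms, writes $\mu^{\rho}_{\Omega}$ through the rescaled kernel $\mathcal K^{\rho}_t(x,z)=t^{-n}\mathcal K^{\rho}(x,z/t)$ with $\mathcal K^{\rho}(x,z)=\Omega(x,z)|z|^{-(n-\rho)}\chi_{\{|z|\le1\}}(z)$, subtracts $\mathcal K^{\rho}_t(x,x-x_0)$ using the vanishing moment, applies the mean value theorem and $(\ref{L-logL})$, integrates $\bigl(\int_{|x-x_0|/2}^{\infty}t^{-2\rho-1}\,dt\bigr)^{1/2}$, and finishes with the same polar-coordinate/dyadic summation you describe, under exactly the conditions $(i)$--$(ii)$. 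The \emph{jump} part you isolate is precisely what the paper's proof omits: in passing from $\bigl|\mathcal K^{\rho}_t(x,x-y)-\mathcal K^{\rho}_t(x,x-x_0)\bigr|$ to its terms I and II, the paper tacitly identifies the two truncations $\chi_{\{|x-y|\le t\}}$ and $\chi_{\{|x-x_0|\le t\}}$; these differ for $t$ in an interval of length $\le 2r_Q$ about $|x-x_0|$, and there the kernel difference is of size $t^{-\rho}|x-x_0|^{-(n-\rho)}$ with no gain of $r_Q/|x-x_0|$. So you have correctly located a genuine gap in the paper's argument, not a presentational shortcut, and you are right that it has no analogue for $T_{\Omega}$ and $T_{\Omega,\alpha}$.

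Your own treatment of the jump part, however, is only a plan, and the plan cannot be completed in the full range $n/(n+1)<p\le1$. Plancherel buys nothing here: for $t\gg r$ one finds $\|a*\chi_{B(0,t)}\|_{L^2}^2\approx\|a\|_{L^1}^2\,r\,t^{n-1}$ (the low-frequency gain from $|\widehat a(\xi)|\lesssim\|a\|_{L^1}r|\xi|$ is dominated by the frequencies $|\xi|\sim 1/r$), which is exactly the trivial bound obtained from $|a*\chi_{B(0,t)}(x)|\le\|a\|_{L^1}$ on the annulus $\{|t-|x-x_0||<r\}$ of measure $\approx t^{n-1}r$; inserting this into your H\"older step on $A_j$ reproduces the same restriction $p>2n/(2n+1)$ as the crude pointwise bound you rejected. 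Moreover that crude bound is sharp: take $\Omega(x,z')=z_1'$ (Lipschitz, mean zero, so $\sigma_1=1$ is admissible) and an atom $a$ odd in $y_1$; then $\int_{B\cap B(x,t)}a\gtrsim\|a\|_{L^1}$ for $t$ in a subinterval of $(\delta-r,\delta+r)$ of length $\approx r$ and for $x$ in a fixed cone, so $\mu^{\rho}_{\Omega}(a)(x)\gtrsim\|a\|_{L^1}r^{1/2}\delta^{-(n+1/2)}$ there and $\|\mu^{\rho}_{\Omega}(a)\|_{L^p}=\infty$ whenever $p\le 2n/(2n+1)$. Consequently your decomposition, with the regular part as you handled it and the jump part estimated crudely, does yield the theorem for $2n/(2n+1)<p\le1$; but for $n/(n+1)<p\le 2n/(2n+1)$ the assertion itself appears to fail, and neither your approach nor the paper's (which never confronts the jump term) can be repaired there.
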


\begin{theorem}\label{mainthm:4}
Let $n\geq 2$, $n/{(n+1)}<p\leq1$, and $\Omega(x,z)$ satisfy $(\ref{cancel})$ and the $L^{\sigma_1}$-$(\log L)^{\sigma_2}$ condition $(\ref{L-logL})$. Then $T_{\Omega}$ is bounded from $WH^p(\mathbb R^n)$ into $WL^p(\mathbb R^n)$ provided that $\sigma_1$ and $\sigma_2$ satisfy either of the following\\
$(i)$ $\sigma_1=n(1/p-1)$ and $\sigma_2>2/p$; \\
$(ii)$ $n(1/p-1)<\sigma_1\leq1$ and $\sigma_2\geq0$.
\end{theorem}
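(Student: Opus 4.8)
The plan is to run the standard weak-Hardy-space scheme: decompose $f$ atomically, cut the decomposition at a height comparable to $\lambda$, estimate the low-height part by the $L^2$-boundedness of $T_{\Omega}$ together with Chebyshev's inequality, and estimate the high-height part by the off-support kernel estimate already established in the proof of Theorem~\ref{mainthm:1}. First observe that, since $\Omega$ satisfies \eqref{cancel} and, by \eqref{L-logL}, lies in $L^{\infty}(\mathbb{R}^n)\times L^r(S^{n-1})$ for every $r<\infty$, Theorem~A applies and gives $\|T_{\Omega}g\|_{L^2}\le C\|g\|_{L^2}$. Fix $f$ with $\|f\|_{WH^p}=1$; by the atomic decomposition of $WH^p$ I write $f=\sum_{k\in\mathbb{Z}}\sum_{j}\lambda_{k,j}a_{k,j}$ (convergence in $\mathcal{S}'$), where each $a_{k,j}$ is a $(p,\infty)$-atom supported in a cube $Q_{k,j}$ with centre $x_{k,j}$ and side $r_{k,j}$ and with vanishing mean (only the mean is needed, since $p>n/(n+1)$ forces $\lfloor n(1/p-1)\rfloor=0$), the cubes $\{Q_{k,j}\}_j$ have bounded overlap for each fixed $k$, $|\lambda_{k,j}|\le C2^{k}|Q_{k,j}|^{1/p}$, and $\sum_j|Q_{k,j}|\le C2^{-kp}$. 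For $\lambda>0$ pick $k_0\in\mathbb{Z}$ with $2^{k_0}\le\lambda<2^{k_0+1}$ and split $f=F_1+F_2$ with $F_1=\sum_{k\le k_0}\sum_j\lambda_{k,j}a_{k,j}$; it then suffices to show $|\{|T_{\Omega}F_i|>\lambda/2\}|\le C\lambda^{-p}$ for $i=1,2$.

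For $F_1$ I would use only the $L^2$ bound. Since at level $k$ the atoms have sup-norm $\lesssim2^{k}$ and their supports have total measure $\lesssim2^{-kp}$, one has $\big\|\sum_j\lambda_{k,j}a_{k,j}\big\|_{L^2}^2\lesssim 2^{2k}\cdot2^{-kp}=2^{k(2-p)}$, and summing the geometric series over $k\le k_0$ (here $p<2$ is used) yields $\|F_1\|_{L^2}\lesssim 2^{k_0(2-p)/2}\approx\lambda^{1-p/2}$. Hence $\|T_{\Omega}F_1\|_{L^2}\lesssim\lambda^{1-p/2}$, and Chebyshev gives $|\{|T_{\Omega}F_1|>\lambda/2\}|\lesssim\lambda^{-2}\|T_{\Omega}F_1\|_{L^2}^2\lesssim\lambda^{-p}$. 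No smoothness of $\Omega$ is needed here.

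For $F_2$ I would inflate the exceptional cubes by a slowly growing dilation. Fix $\varepsilon\in(0,p/n)$, set $N_k:=2^{\varepsilon(k-k_0)}$ for $k>k_0$, and let $\Omega^{*}:=\bigcup_{k>k_0}\bigcup_j N_k\cdot2\sqrt{n}\,Q_{k,j}$. Then $|\Omega^{*}|\le C\sum_{k>k_0}N_k^{n}\sum_j|Q_{k,j}|\le C2^{-k_0p}\sum_{m\ge1}2^{m(\varepsilon n-p)}\le C\lambda^{-p}$, so it remains to control $\{x\notin\Omega^{*}:|T_{\Omega}F_2(x)|>\lambda/2\}$. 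For $x\notin2\sqrt{n}\,Q_{k,j}$, subtracting the kernel value at $x_{k,j}$ and using the vanishing mean of $a_{k,j}$, \eqref{L-logL} and $\sigma_1\le1$ gives, exactly as in the proof of Theorem~\ref{mainthm:1}, a pointwise bound of the form
\[
\big|T_{\Omega}a_{k,j}(x)\big|\le C\,r_{k,j}^{\,n-n/p+\sigma_1}\,\frac{1}{|x-x_{k,j}|^{\,n+\sigma_1}\big(\log\frac{|x-x_{k,j}|}{r_{k,j}}\big)^{\sigma_2}},
\]
and integrating the $p$-th power of this bound over the complement of $N_k\cdot2\sqrt{n}\,Q_{k,j}$ (the same scaling computation as in that proof) gives $\int_{(N_k\cdot2\sqrt{n}\,Q_{k,j})^{c}}|T_{\Omega}a_{k,j}|^{p}\le C(\log N_k)^{1-\sigma_2 p}$ in case $(i)$ (since $(n+\sigma_1)p=n$ and $\sigma_2 p>1$) and $\le C\,N_k^{\,n-(n+\sigma_1)p}$ in case $(ii)$. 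Using $p\le1$, the bounded overlap, $|\lambda_{k,j}|^{p}\le C2^{kp}|Q_{k,j}|$, $\sum_j|Q_{k,j}|\le C2^{-kp}$, and $(\Omega^{*})^{c}\subseteq(N_k\cdot2\sqrt{n}\,Q_{k,j})^{c}$, I obtain
\[
\int_{(\Omega^{*})^{c}}|T_{\Omega}F_2|^{p}\le\sum_{k>k_0}\sum_j|\lambda_{k,j}|^{p}\int_{(N_k\cdot2\sqrt{n}\,Q_{k,j})^{c}}|T_{\Omega}a_{k,j}|^{p}\le C\sum_{m\ge1}\theta_m,
\]
with $\theta_m=m^{1-\sigma_2 p}$ in case $(i)$ and $\theta_m=2^{-\varepsilon[(n+\sigma_1)p-n]m}$ in case $(ii)$; this series converges precisely because $\sigma_2>2/p$ (so $1-\sigma_2 p<-1$) in case $(i)$, and because $(n+\sigma_1)p>n$ in case $(ii)$. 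Chebyshev then bounds $|\{x\notin\Omega^{*}:|T_{\Omega}F_2(x)|>\lambda/2\}|$ by $C\lambda^{-p}$, and adding $|\Omega^{*}|\le C\lambda^{-p}$ completes the estimate for $F_2$. Combining the two parts, $\lambda^{p}|\{|T_{\Omega}f|>\lambda\}|\le C$ uniformly in $\lambda$, i.e.\ $\|T_{\Omega}f\|_{WL^p}\le C\|f\|_{WH^p}$.

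The main obstacle, and the reason the threshold rises from $\sigma_2>1/p$ in Theorem~\ref{mainthm:1} to $\sigma_2>2/p$ here, is the summation over the dyadic levels in $F_2$: the naive tail bound $\int_{(2\sqrt{n}\,Q_{k,j})^{c}}|T_{\Omega}a_{k,j}|^{p}\le C$ (with no gain in $k$) would produce the divergent series $\sum_{k>k_0}1$, so one is forced to enlarge the exceptional cubes by $N_k$. These dilation factors must be small enough to keep $|\Omega^{*}|\lesssim\lambda^{-p}$ yet large enough that the logarithmic tail gain $(\log N_k)^{1-\sigma_2 p}\approx(k-k_0)^{1-\sigma_2 p}$ makes $\sum_{m\ge1}m^{1-\sigma_2 p}$ converge, and reconciling these two opposing requirements is exactly what consumes an extra power of the logarithm, hence the extra $1/p$ in the hypothesis on $\sigma_2$; in case $(ii)$ the tail decay is polynomial in $N_k$ and no condition on $\sigma_2$ is needed. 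Everything else is a routine transcription of the estimates from the proof of Theorem~\ref{mainthm:1} and the $L^2$-boundedness from Theorem~A, with the usual truncation/density arguments used to justify the formal manipulations with $F_2$ (whose defining series need not converge in $H^p$).
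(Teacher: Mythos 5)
Your proposal is correct and follows essentially the same route as the paper: the same cut of the atomic decomposition at height $\lambda\sim 2^{k_0}$, the same $L^2$--Chebyshev treatment of the low part, and the same slowly dilated exceptional set (your $N_k=2^{\varepsilon(k-k_0)}$ with $\varepsilon<p/n$ is exactly the paper's $\tau^{p(k-k_0)/n}$ with $1<\tau<2$), with the identical tail computation producing $(k-k_0)^{1-p\sigma_2}$ in case $(i)$, whence the threshold $\sigma_2>2/p$. The only differences are cosmetic (coefficients times $(p,\infty)$-atoms versus the building blocks $b^k_i$, and a slightly more direct handling of the geometric sum in case $(ii)$).
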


\begin{theorem}\label{mainthm:5}
Let $0<\alpha<n$, $n/{(n+1)}<p\leq1$, $1/q=1/p-\alpha/n$ and $\Omega(x,z)$ satisfy the $L^{\sigma_1}$-$(\log L)^{\sigma_2}$ condition $(\ref{L-logL})$. Then $T_{\Omega,\alpha}$ is bounded from $WH^p(\mathbb R^n)$ into $WL^q(\mathbb R^n)$ provided that $\sigma_1$ and $\sigma_2$ satisfy either of the following\\
$(i)$ $\sigma_1=n(1/q-1)+\alpha$ and $\sigma_2>1/q+\max\{1,1/q\}$;\\
$(ii)$ $n(1/q-1)+\alpha<\sigma_1\leq1$ and $\sigma_2\geq0$.
\end{theorem}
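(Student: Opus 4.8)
}
The plan is the classical Fefferman--Soria scheme for weak-type bounds on $WH^p$: decompose $f$ via the weak atomic decomposition, split the atoms by height into a ``low'' and a ``high'' part at a threshold tuned to $\lambda$, treat the low part by Lebesgue boundedness (Theorem B) and the high part by the kernel regularity $(\ref{L-logL})$ together with the measure of a suitable exceptional set. Since $n/(n+1)<p\le1$, a $(p,\infty)$-atom needs only the vanishing moment $\int a=0$. Normalizing $\|f\|_{WH^p}=1$ and recalling that $\|g\|_{WL^q}^q=\sup_{\lambda>0}\lambda^q\big|\{x:|g(x)|>\lambda\}\big|$, it suffices to prove $\big|\{x:|T_{\Omega,\alpha}f(x)|>\lambda\}\big|\le C\lambda^{-q}$ for every $\lambda>0$. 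Write $f=\sum_{k\in\mathbb Z}\sum_i\lambda_i^k a_i^k$, where $a_i^k$ is a $(p,\infty)$-atom with $\mathrm{supp}\,a_i^k\subseteq B_i^k=B(x_i^k,r_i^k)$, $\lambda_i^k\approx 2^k|B_i^k|^{1/p}$, the balls $\{B_i^k\}_i$ have bounded overlap for each fixed $k$, and $\sup_{k}2^{kp}\sum_i|B_i^k|\le C$. Set $g_k=\sum_i\lambda_i^k a_i^k$, choose $k_0\in\mathbb Z$ with $2^{k_0}\approx\lambda^{q/p}$, and split $f=f_1+f_2$ with $f_1=\sum_{k\le k_0}g_k$, $f_2=\sum_{k>k_0}g_k$; I would bound $\big|\{|T_{\Omega,\alpha}f_j|>\lambda/2\}\big|$, $j=1,2$, separately.

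For $f_1$ no regularity of $\Omega$ is used. Since boundedly many $B_i^k$ cover each point and $\|a_i^k\|_\infty\le|B_i^k|^{-1/p}$, one has $\|g_k\|_{L^\infty}\le C2^k$ and $\|g_k\|_{L^1}\le C2^{k(1-p)}$, hence $\|g_k\|_{L^s}\le C2^{k(1-p/s)}$ for every $s\ge1$. Fixing $s\in(1,n/\alpha)$ and $1/t=1/s-\alpha/n$ (so $t>1$), Theorem B applies because $\Omega\in L^\infty(\mathbb R^n)\times L^r(S^{n-1})$ for every finite $r$, giving $\|T_{\Omega,\alpha}g_k\|_{L^t}\le C2^{k(1-p/s)}$; as $1-p/s>0$ the geometric sum over $k\le k_0$ yields $\|T_{\Omega,\alpha}f_1\|_{L^t}\le C2^{k_0(1-p/s)}$. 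Chebyshev gives $\big|\{|T_{\Omega,\alpha}f_1|>\lambda/2\}\big|\le C\lambda^{-t}2^{k_0t(1-p/s)}$, and the identity $1/q=1/p-\alpha/n=1/p-(1/s-1/t)$ shows that with $2^{k_0}\approx\lambda^{q/p}$ this is exactly $C\lambda^{-q}$.

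For $f_2$ the regularity of $\Omega$ is decisive. Using $\int a_i^k=0$, for $x\notin 2\sqrt n\,B_i^k$ one writes $T_{\Omega,\alpha}a_i^k(x)=\int\big[\Omega(x,x-y)|x-y|^{\alpha-n}-\Omega(x,x-x_i^k)|x-x_i^k|^{\alpha-n}\big]a_i^k(y)\,dy$, splits the bracket into a radial difference (handled by the mean value theorem) and an angular difference (handled by $(\ref{L-logL})$, the angular distance of $x-y$ and $x-x_i^k$ being $\lesssim r_i^k/|x-x_i^k|$), and uses $\sigma_1\le1$ to obtain
\begin{equation*}
|T_{\Omega,\alpha}a_i^k(x)|\le C\,\frac{(r_i^k)^{\,n(1-1/p)+\sigma_1}}{|x-x_i^k|^{\,n-\alpha+\sigma_1}}\Big(\log\frac{|x-x_i^k|}{r_i^k}\Big)^{-\sigma_2}.
\end{equation*}
Fix a small $\delta\in(0,p/n)$ and set $\widehat E=\bigcup_{k>k_0}\bigcup_i 2\sqrt n\,2^{(k-k_0)\delta}B_i^k$; since $\sum_i|B_i^k|\le C2^{-kp}$, a geometric summation gives $|\widehat E|\le C2^{-k_0p}\le C\lambda^{-q}$. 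On $\widehat E^{\,c}$ one has $|x-x_i^k|>2\sqrt n\,2^{(k-k_0)\delta}r_i^k$, so $\log(|x-x_i^k|/r_i^k)\ge c\,(1+k-k_0)$, and the change of variables $x-x_i^k=r_i^k u$ — in which the powers of $r_i^k$ cancel because $n/p-\alpha=n/q$ — gives, in case $(i)$ (where $\sigma_1=n(1/q-1)+\alpha$ forces $n-\alpha+\sigma_1=n/q$),
\begin{equation*}
\int_{\mathbb R^n\setminus 2\sqrt n\,2^{(k-k_0)\delta}B_i^k}|T_{\Omega,\alpha}a_i^k(x)|^{q}\,dx\le C\int_{c(1+k-k_0)}^{\infty}v^{-q\sigma_2}\,dv\le C\,(1+k-k_0)^{-(q\sigma_2-1)},
\end{equation*}
finite provided $q\sigma_2>1$, while in case $(ii)$, where $n-\alpha+\sigma_1>n/q$, the same integral is $\le C\,2^{-c(k-k_0)}$ with no condition on $\sigma_2$. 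Since $\widehat E^{\,c}\subseteq\mathbb R^n\setminus 2\sqrt n\,2^{(k-k_0)\delta}B_i^k$ for every $k>k_0$ and $i$, I would sum over $i$ within the layer using $\sum_i(\lambda_i^k)^{q}\le C$ when $q\le1$ (since $\sum_i|B_i^k|^{q/p}\le(\sum_i|B_i^k|)^{q/p}$ as $q/p\ge1$) and $\sum_i\lambda_i^k\le C$ when $q\ge1$ (since $\sum_i|B_i^k|^{1/p}\le(\sum_i|B_i^k|)^{1/p}$), and then sum over $k>k_0$ by $q$-subadditivity of $\|\cdot\|_{L^q}^q$ when $q\le1$ and by the triangle inequality in $L^q$ when $q\ge1$. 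In case $(i)$ this produces $\sum_{j\ge1}(1+j)^{-(q\sigma_2-1)}$ (for $q\le1$) and $\sum_{j\ge1}(1+j)^{-(\sigma_2-1/q)}$ (for $q\ge1$), both convergent exactly when $\sigma_2>1/q+\max\{1,1/q\}$; in case $(ii)$ the series converge geometrically, so $\sigma_2\ge0$ suffices. Hence $\|T_{\Omega,\alpha}f_2\|_{L^q(\widehat E^{\,c})}\le C$, Chebyshev gives $\big|\{x\in\widehat E^{\,c}:|T_{\Omega,\alpha}f_2(x)|>\lambda/2\}\big|\le C\lambda^{-q}$, and combined with $|\widehat E|\le C\lambda^{-q}$ and the $f_1$ bound this closes the estimate.

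The hard part is exactly this last summation over the high layers: once the $r_i^k$-powers cancel, the single-atom $L^q$-estimate over the complement of the plain dilate $2\sqrt n\,B_i^k$ carries no decay in $k$, so one cannot simply add over $k>k_0$. The fix is to enlarge the excluded balls by the layer-dependent factor $2^{(k-k_0)\delta}$, which keeps the exceptional set of measure $\lesssim 2^{-k_0p}\approx\lambda^{-q}$ while converting the qualitative gain $(\log\frac{|x-x_i^k|}{r_i^k})^{-\sigma_2}$ into the quantitative decay $(1+k-k_0)^{-\sigma_2}$ in the layer index; the ensuing sum in $k$ — on $q$-th powers if $q\le1$, on norms if $q>1$ — is precisely what forces the loss $\max\{1,1/q\}$ over the threshold $\sigma_2>1/q$ that already suffices in the strong Hardy-space estimate of Theorem \ref{mainthm:2}. (The remaining technical points — absolute convergence of $\sum_i\lambda_i^k T_{\Omega,\alpha}a_i^k$ a.e.\ on $\widehat E^{\,c}$, and the $L^s$-convergence of $\sum_{k\le k_0}g_k$ so that $T_{\Omega,\alpha}$ passes inside the sum — are routine.)
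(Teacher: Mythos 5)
Your proposal is correct and follows essentially the same route as the paper: the same height-$k_0$ splitting at $2^{k_0}\approx\lambda^{q/p}\big\|f\big\|_{WH^p}^{1-q/p}$, the same $L^{p_1}\to L^{q_1}$ treatment of the low part via Theorem B, the same layer-dependent dilation of the excluded cubes (the paper's factor $\tau^{{p(k-k_0)}/n}$ with $1<\tau<2$ is exactly your $2^{(k-k_0)\delta}$ with $\delta\in(0,p/n)$), and the same case split $q\le1$ versus $q>1$ producing the loss $\max\{1,1/q\}$. The only cosmetic caveat is that your single combined pointwise bound absorbs the mean-value-theorem term into the $\sigma_1$-term with the logarithm attached, which is legitimate in case $(i)$ because there $\sigma_1=n/p-n<1$, while in case $(ii)$ (where $\sigma_1=1$ is possible) the logarithm should simply be dropped for that term --- exactly as the paper keeps the two contributions separate.
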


\begin{theorem}\label{mainthm:6}
Let $1\leq \rho<n$, $n/{(n+1)}<p\leq1$, and $\Omega(x,z)$ satisfy $(\ref{cancel})$ and the $L^{\sigma_1}$-$(\log L)^{\sigma_2}$ condition $(\ref{L-logL})$. Then $\mu^{\rho}_{\Omega}$ is bounded from $WH^p(\mathbb R^n)$ into $WL^p(\mathbb R^n)$ provided that $\sigma_1$ and $\sigma_2$ satisfy either of the following\\
$(i)$ $\sigma_1=n(1/p-1)$ and $\sigma_2>2/p$; \\
$(ii)$ $n(1/p-1)<\sigma_1\leq1$ and $\sigma_2\geq0$.
\end{theorem}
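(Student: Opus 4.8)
The plan is to perform a Calder\'on--Zygmund-type decomposition of $f$ at each level $\lambda$ and to graft onto it the strong-type bound of Theorem \ref{mainthm:3}. Recall that every $f\in WH^p(\mathbb R^n)$ has an atomic decomposition $f=\sum_{k\in\mathbb Z}\sum_i\lambda_i^k a_i^k$ in $\mathcal S'(\mathbb R^n)$, where each $a_i^k$ is a $(p,\infty)$-atom with vanishing moments up to order $\lfloor n(1/p-1)\rfloor=0$ (the last equality because $n/(n+1)<p\le1$), supported in a cube $Q_i^k$, with $|\lambda_i^k|\approx 2^k|Q_i^k|^{1/p}$, with $\{Q_i^k\}_i$ of bounded overlap for each fixed $k$, and with $\sum_i|Q_i^k|\le C2^{-kp}\|f\|_{WH^p}^p$. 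Assume $\|f\|_{WH^p}=1$. Given $\lambda>0$, pick $j_0\in\mathbb Z$ with $2^{j_0}\le\lambda<2^{j_0+1}$, and split $f=f_1+f_2$ with $f_1=\sum_{k\le j_0}\sum_i\lambda_i^k a_i^k$ and $f_2=\sum_{k>j_0}\sum_i\lambda_i^k a_i^k$. It suffices to bound $\lambda^p|\{x:\mu^{\rho}_{\Omega}f_\nu(x)>\lambda/2\}|$ for $\nu=1,2$.

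For the low part, bounded overlap together with $\|a_i^k\|_\infty\le|Q_i^k|^{-1/p}$ and $|\lambda_i^k|\approx 2^k|Q_i^k|^{1/p}$ gives, via Cauchy--Schwarz, $\big\|\sum_i\lambda_i^k a_i^k\big\|_2^2\lesssim 2^{2k}\sum_i|Q_i^k|\le 2^{k(2-p)}$, hence $\|f_1\|_2\lesssim\sum_{k\le j_0}2^{k(2-p)/2}\lesssim 2^{j_0(2-p)/2}\approx\lambda^{(2-p)/2}$ (the geometric series converges since $2-p>0$). The $L^{\sigma_1}$--$(\log L)^{\sigma_2}$ condition forces $\Omega(x,z)\in L^\infty(\mathbb R^n)\times L^r(S^{n-1})$ for every $r\ge1$, so $\mu^{\rho}_{\Omega}$ is bounded on $L^2(\mathbb R^n)$; by Chebyshev's inequality, $\lambda^p|\{\mu^{\rho}_{\Omega}f_1>\lambda/2\}|\lesssim\lambda^{p-2}\|\mu^{\rho}_{\Omega}f_1\|_2^2\lesssim\lambda^{p-2}\|f_1\|_2^2\lesssim 1$.

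For the high part one cannot apply Theorem \ref{mainthm:3} to $f_2$ directly, since $\sum_{k>j_0}2^{kp}\sum_i|Q_i^k|$ need not be finite. The device is to enlarge the atomic cubes by a dilation factor that \emph{grows} with $k-j_0$: fix $\gamma\in(0,p/n)$, set $l_k=\lceil\gamma(k-j_0)\rceil\ge1$ for $k>j_0$, and put $\mathcal O=\bigcup_{k>j_0}\bigcup_i 2^{l_k}\sqrt n\,Q_i^k$. Since $\gamma n<p$, $|\mathcal O|\le\sum_{k>j_0}(2^{l_k}\sqrt n)^n\sum_i|Q_i^k|\lesssim\sum_{k>j_0}2^{l_kn-kp}\lesssim 2^{-j_0p}\sum_{m\ge1}2^{m(\gamma n-p)}\lesssim 2^{-j_0p}\approx\lambda^{-p}$, so $\lambda^p|\mathcal O|\lesssim 1$. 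Off $\mathcal O$, Chebyshev's inequality, the sublinearity of $\mu^{\rho}_{\Omega}$ and $p\le1$ give
\begin{equation*}
\lambda^p\big|\{x\notin\mathcal O:\mu^{\rho}_{\Omega}f_2(x)>\lambda/2\}\big|\lesssim\sum_{k>j_0}\sum_i|\lambda_i^k|^p\int_{(2^{l_k}\sqrt n\,Q_i^k)^c}\big|\mu^{\rho}_{\Omega}a_i^k(x)\big|^p\,dx ,
\end{equation*}
where we used $2^{l_k}\sqrt n\,Q_i^k\subseteq\mathcal O$ to pass to the larger complement. The point now is that the proof of Theorem \ref{mainthm:3} supplies the quantitative ``far-away'' estimate: for every such atom $a$ supported in a cube $Q$ and every $l\ge1$,
\begin{equation*}
\int_{(2^l\sqrt n\,Q)^c}\big|\mu^{\rho}_{\Omega}a(x)\big|^p\,dx\le C\,\theta_l,\qquad\theta_l=2^{-l\varepsilon_0}\ \text{in case }(ii),\quad\theta_l=l^{\,1-p\sigma_2}\ \text{in case }(i),
\end{equation*}
for some $\varepsilon_0=\varepsilon_0(n,p,\sigma_1)>0$; in case $(ii)$ this is the geometric gain $(\operatorname{diam}Q/|x|)^{\sigma_1-n(1/p-1)}$ in the size of $\mu^{\rho}_{\Omega}a$ away from $Q$, while in case $(i)$, where that gain degenerates, only the logarithmic gain survives and $\sum_{m\ge l}m^{-p\sigma_2}\approx l^{1-p\sigma_2}$ (which already requires $p\sigma_2>1$). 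Since $|\lambda_i^k|^p\approx 2^{kp}|Q_i^k|$ and $\sum_i|Q_i^k|\le 2^{-kp}$, the right-hand side above is $\lesssim\sum_{k>j_0}\theta_{l_k}$. In case $(ii)$ this equals $\sum_{k>j_0}2^{-\lceil\gamma(k-j_0)\rceil\varepsilon_0}<\infty$ with no restriction on $\sigma_2$; in case $(i)$ it equals $\sum_{k>j_0}\lceil\gamma(k-j_0)\rceil^{\,1-p\sigma_2}$, which is finite precisely when $1-p\sigma_2<-1$, i.e. $\sigma_2>2/p$ --- exactly the hypothesis of part $(i)$. Combining the three pieces proves Theorem \ref{mainthm:6}.

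The crux --- and the reason $\sigma_2>2/p$ is forced here where $\sigma_2>1/p$ sufficed in the strong-type Theorem \ref{mainthm:3} --- is this growing-dilation trade-off: bounding $\int_{\mathcal O^c}|\mu^{\rho}_{\Omega}a_i^k|^p$ merely by the constant of Theorem \ref{mainthm:3} would leave the divergent series $\sum_{k>j_0}1$, whereas letting the dilation factors $2^{l_k}$ grow linearly in $k-j_0$ converts this into the convergent $\sum_{k>j_0}\theta_{l_k}$ at the cost of one extra power of $k-j_0$ in the critical case, turning $l^{-p\sigma_2}$ into $l^{1-p\sigma_2}$. The remaining, more technical, obstacle is to confirm that the quantitative far-away estimate is genuinely extractable from the proof of Theorem \ref{mainthm:3}; for the parametric Marcinkiewicz operator this demands some care with the extra $t$-integration and, in particular, with the transition range $t\approx|x-y|$, which is why $\mu^{\rho}_{\Omega}$ carries the same exponents as $T_{\Omega}$ rather than those of $T_{\Omega,\alpha}$.
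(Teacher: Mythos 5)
Your proposal is correct and follows essentially the same route as the paper: the same split of the atomic decomposition at the level $2^{k_0}\approx\lambda$, the same $L^2$ treatment of the low part, the same exceptional set built from cubes dilated by a factor growing geometrically in $k-k_0$ (your $2^{\lceil\gamma(k-k_0)\rceil}$ with $\gamma<p/n$ is the paper's $\tau^{p(k-k_0)/n}$ with $\tau=2^{\gamma n/p}<2$), and the same far-away pointwise estimate for $\mu^{\rho}_{\Omega}$ on an atom, obtained exactly as you indicate by noting that the truncation of $\mathcal K^{\rho}$ forces $t\gtrsim|x-x_0|$ so that $\bigl(\int_{|x-x_0|/2}^{\infty}t^{-2\rho-1}\,dt\bigr)^{1/2}\approx|x-x_0|^{-\rho}$ restores the $T_{\Omega}$ exponents. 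Your accounting of where $\sigma_2>2/p$ enters (the extra factor $k-k_0$ from the logarithm in the critical case) matches the paper's computation.
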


It should be pointed out that for the special case of $p=1$ and $\sigma_1=0$, Theorems \ref{mainthm:1}--\ref{mainthm:6} were already obtained by the author in \cite{wang}.

\section{Notations and preliminaries}

For any $0<p<\infty$, we denote by $L^p(\mathbb R^n)$ the classical Lebesgue spaces of all functions $f$ satisfying
\begin{equation}
\big\|f\big\|_{L^p}=\left(\int_{\mathbb R^n}|f(x)|^p\,dx\right)^{1/p}<\infty.
\end{equation}
When $p=\infty$, $L^\infty(\mathbb R^n)$ will be defined as follows:
\begin{equation}
\big\|f\big\|_{L^\infty}=\underset{x\in\mathbb R^n}{\mbox{ess\,sup}}\,|f(x)|<\infty.
\end{equation}
We also denote by $WL^p(\mathbb R^n)$ the weak $L^p$ spaces consisting of all measurable functions $f$ such that
\begin{equation}
\big\|f\big\|_{WL^p}=\sup_{\lambda>0}\lambda\cdot \big|\big\{x\in\mathbb R^n:|f(x)|>\lambda \big\}\big|^{1/p}<\infty.
\end{equation}

As we know, for any $0<p\leq1$, the Hardy spaces $H^p(\mathbb R^n)$ can be defined in terms of maximal functions. We write $\mathscr S(\mathbb R^n)$ to denote the Schwartz space of all rapidly decreasing smooth functions and $\mathscr S'(\mathbb R^n)$ to denote the space of all tempered distributions, i.e., the topological dual of $\mathscr S(\mathbb R^n)$.
Let $\varphi$ be a function in $\mathscr S(\mathbb R^n)$ satisfying $\int_{\mathbb R^n}\varphi(x)\,dx=1$.
Set
\begin{equation*}
\varphi_t(x)=t^{-n}\varphi(x/t),\quad t>0,\; x\in\mathbb R^n.
\end{equation*}
We will define the radial maximal function $M_\varphi(f)$ by
\begin{equation*}
M_\varphi f(x)=\sup_{t>0}\big|(\varphi_t*f)(x)\big|.
\end{equation*}
Then the Hardy spaces $H^p(\mathbb R^n)$ consist of those tempered distributions $f\in \mathscr S'(\mathbb R^n)$ for which
$M_\varphi(f)\in L^p(\mathbb R^n)$ with $\big\|f\big\|_{H^p}=\big\|M_\varphi(f)\big\|_{L^p}$. For $0<p\leq1$, one can characterize the Hardy spaces $H^p(\mathbb R^n)$ in terms of atoms in the following way (see \cite{coifman} and \cite{latter}).

\newtheorem{defn}[theorem]{Definition}

\begin{defn}
Let $0<p\leq1$, $1<q\leq\infty$, and the nonnegative integer $s\geq N=[n(1/p-1)]$, here $[x]$ indicates the integer part of $x$. A real-valued function $a(x)$ is said to be a $(p,q,s)$-atom centered at $x_0$ if the following conditions are satisfied:

$(a)$ $a\in L^q(\mathbb R^n)$ and is supported in a cube $Q$ centered at $x_0;$

$(b)$ $\|a\|_{L^q}\leq |Q|^{1/q-1/p};$

$(c)$ $\int_{\mathbb R^n}a(x)x^\gamma\,dx=0$ for every multi-index $\gamma$ with $|\gamma|\le s$.
\end{defn}

We will need the following atomic decomposition theorem for Hardy spaces $H^p(\mathbb R^n)$ given in \cite{coifman,latter}(For more details, the reader is referred to \cite{lu} and \cite{stein2}).

\begin{theorem} \label{thm:Hardy}
Let $0<p\leq1$ and $1<q\leq\infty$. For each $f\in H^p(\mathbb R^n)$, there exist a collection of $(p,q,[n(1/p-1)])$-atoms $\{a_j\}$ and a sequence of real numbers $\{\lambda_j\}$ with $\sum_j|\lambda_j|^p\leq C\|f\|^p_{H^p}$ such that $f=\sum_j\lambda_j a_j$ both in the sense of distributions and in the $H^p$ norm. Moreover,
\begin{equation*}
\big\|f\big\|_{H^p}\sim\bigg(\sum_j|\lambda_j|^p\bigg)^{1/p},
\end{equation*}
where the infimum is taken over all the above decompositions of $f\in H^p(\mathbb R^n)$ into $(p,q,[n(1/p-1)])$-atoms.
\end{theorem}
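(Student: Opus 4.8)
The plan is to establish the decomposition by the classical Calder\'on--Zygmund decomposition of the distribution $f$ at a geometric sequence of heights, in the spirit of Fefferman--Stein, Coifman and Latter. As a first step I would replace the radial maximal function $M_\varphi$ by the grand maximal function $\mathfrak{M}f(x)=\sup\{|(\psi_t*f)(x)|:t>0,\ \psi\in\mathscr{A}_N\}$, where $\mathscr{A}_N$ is a suitable bounded family of normalized test functions with $N=N(p,n)$ large; by the Fefferman--Stein equivalence of maximal characterizations of $H^p$ for $0<p\le1$, one has $\|\mathfrak{M}f\|_{L^p}\sim\|f\|_{H^p}$, and since $\mathfrak{M}f$ is lower semicontinuous, its super-level sets are open. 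The whole argument is then a quantitative decomposition of $f$ guided by these level sets.

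For each $k\in\mathbb{Z}$ put $\Omega_k=\{x\in\mathbb{R}^n:\mathfrak{M}f(x)>2^k\}$; then $\Omega_k$ is open, $|\Omega_k|<\infty$, and $\Omega_{k+1}\subset\Omega_k$. Fix a Whitney decomposition $\Omega_k=\bigcup_j Q_{k,j}$ into dyadic cubes with $\mathrm{diam}\,Q_{k,j}\sim\mathrm{dist}(Q_{k,j},\mathbb{R}^n\setminus\Omega_k)$, together with a smooth partition of unity $\{\zeta_{k,j}\}$ adapted to slightly dilated cubes $Q_{k,j}^{*}$, satisfying $\sum_j\zeta_{k,j}=\chi_{\Omega_k}$ and $|\partial^\gamma\zeta_{k,j}|\lesssim\ell(Q_{k,j})^{-|\gamma|}$. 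With $s=[n(1/p-1)]$, define the bad parts $b_{k,j}=(f-c_{k,j})\zeta_{k,j}$, where $c_{k,j}$ is the unique polynomial of degree $\le s$ making $\int b_{k,j}(x)x^\gamma\,dx=0$ for all $|\gamma|\le s$; this is meaningful because $f$ coincides on $\Omega_k$ with a locally integrable function controlled by $\mathfrak{M}f$. The technical core is the pointwise bound $\|b_{k,j}\|_{L^\infty}\lesssim 2^k$, which follows by testing $f$ against rescaled bumps of the form $\zeta_{k,j}(\cdot)(\cdot)^\gamma$ and exploiting the Whitney property that $Q_{k,j}^{*}$ comes close to $\mathbb{R}^n\setminus\Omega_k$, where $\mathfrak{M}f\le 2^k$.

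Set $g_k=f-\sum_j b_{k,j}$. One checks $\|g_k\|_{L^\infty}\lesssim 2^k$, $g_k\to f$ in $\mathscr{S}'$ as $k\to+\infty$, and $g_k\to0$ in $\mathscr{S}'$ as $k\to-\infty$, so that $f=\sum_{k}(g_{k+1}-g_k)$ in $\mathscr{S}'$. Using $\sum_j\zeta_{k,j}\equiv1$ on the support of each $b_{k+1,i}$, one reorganizes $g_{k+1}-g_k=\sum_j A_{k,j}$, where each $A_{k,j}$ is supported in $Q_{k,j}^{*}$, satisfies $\int A_{k,j}(x)x^\gamma\,dx=0$ for $|\gamma|\le s$, and obeys $\|A_{k,j}\|_{L^\infty}\lesssim 2^k$. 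Hence $a_{k,j}:=A_{k,j}/(C2^k|Q_{k,j}^{*}|^{1/p})$ is a $(p,\infty,s)$-atom, a fortiori a $(p,q,s)$-atom for every $1<q\le\infty$, and with $\lambda_{k,j}:=C2^k|Q_{k,j}^{*}|^{1/p}$ we obtain $f=\sum_{k,j}\lambda_{k,j}a_{k,j}$. The quasi-norm estimate then reads $\sum_{k,j}|\lambda_{k,j}|^p\sim\sum_k 2^{kp}\sum_j|Q_{k,j}|\sim\sum_k 2^{kp}|\Omega_k|\sim\int_{\mathbb{R}^n}(\mathfrak{M}f)^p\,dx\sim\|f\|_{H^p}^p$, using the bounded overlap of $\{Q_{k,j}^{*}\}_j$ and the layer-cake formula; convergence of the series in $H^p$ follows by applying the same estimate to the tails. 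For the converse inequality $\|f\|_{H^p}\lesssim(\sum_j|\lambda_j|^p)^{1/p}$ one shows that every $(p,q,s)$-atom $a$ has $\|a\|_{H^p}\le C$ uniformly --- estimating $M_\varphi a$ by the Hardy--Littlewood maximal function near the supporting cube and, away from it, by a Taylor expansion of $\varphi_t$ combined with the vanishing moments --- and then sums these bounds using $p\le1$, which finishes the proof. The main obstacle throughout is obtaining the uniform $L^\infty$ control of $b_{k,j}$ and $g_k$ purely in terms of $2^k$; the remaining steps are bookkeeping with the Whitney decomposition and the distributional convergence.
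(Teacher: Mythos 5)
This theorem is not proved in the paper at all: it is the classical atomic decomposition of $H^p(\mathbb{R}^n)$, quoted from Coifman and Latter (with Lu and Stein cited for details). Your outline is precisely the standard Calder\'on--Zygmund-type argument from those references --- grand maximal function, level sets $\Omega_k$, Whitney decomposition with adapted partition of unity, moment-corrected bad parts, reassembly of $g_{k+1}-g_k$ into $(p,\infty,s)$-atoms, and the uniform $H^p$ bound for a single atom for the converse --- and it is correct as a sketch, with the genuinely technical steps (the uniform $L^\infty$ bounds on $b_{k,j}$ and $g_k$, and the bookkeeping in the reorganization of $g_{k+1}-g_k$) correctly identified but, naturally, not carried out in detail here.
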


On the other hand, the weak $H^p$ spaces have first appeared in the work of Fefferman, Rivi\`ere and Sagher \cite{cfefferman}, which are the intermediate spaces between two Hardy spaces through the real method of interpolation. The atomic decomposition characterization of weak $H^1$ space on $\mathbb R^n$ was given by Fefferman and Soria in \cite{rfefferman}. Later, Liu \cite{liu} established the weak $H^p$ spaces on homogeneous groups for the whole range $0<p\le1$. The corresponding results related to $\mathbb R^n$ can be found in \cite{lu}. Let $0<p\le1$ and $N=[n(1/p-1)]$. Define
\begin{equation*}
\mathscr A_{N}=\Big\{\varphi\in\mathscr S(\mathbb R^n):\sup_{x\in\mathbb R^n}\sup_{|\alpha|\le N+1}(1+|x|)^{N+n+1}\big|D^\alpha\varphi(x)\big|\le1\Big\},
\end{equation*}
where $\alpha=(\alpha_1,\dots,\alpha_n)\in(\mathbb N\cup\{0\})^n$, $|\alpha|=\alpha_1+\dots+\alpha_n$, and
\begin{equation*}
D^\alpha\varphi=\frac{\partial^{|\alpha|}\varphi}{\partial x^{\alpha_1}_1\cdots\partial x^{\alpha_n}_n}.
\end{equation*}
For any given $f\in\mathscr S'(\mathbb R^n)$, the grand maximal function of $f$ is defined by
\begin{equation*}
G f(x)=\sup_{\varphi\in\mathscr A_{N}}\sup_{|y-x|<t}\big|(\varphi_t*f)(y)\big|.
\end{equation*}
Then we can define the weak Hardy spaces $WH^p(\mathbb R^n)$ by $WH^p(\mathbb R^n)=\big\{f\in\mathscr S'(\mathbb R^n):G(f)\in WL^p(\mathbb R^n)\big\}$. Moreover, we set $\big\|f\big\|_{WH^p}=\big\|G(f)\big\|_{WL^p}$. We also need the following atomic decomposition theorem for weak Hardy spaces $WH^p(\mathbb R^n)$ given in \cite{rfefferman,liu} (see also \cite{lu}).

\begin{theorem} \label{thm:weak Hardy}
Let $0<p\leq1$. For every $f\in WH^p(\mathbb R^n)$, then there exists a sequence of bounded measurable functions $\{f_k\}_{k=-\infty}^\infty$ with the following properties:

$(i)$ $f=\sum_{k=-\infty}^\infty f_k$ in the sense of distributions.

$(ii)$ Each $f_k$ can be further decomposed into $f_k=\sum_i b^k_i$, where $\{b^k_i\}$ satisfies

\quad $(a)$ Each $b^k_i$ is supported in a cube $Q^k_i$ with $\sum_{i}\big|Q^k_i\big|\le c2^{-kp}$, and $\sum_i\chi_{Q^k_i}(x)\le c$. Here $\chi_E$ denotes the characteristic function of the set $E$ and $c\sim\big\|f\big\|^p_{WH^p};$

\quad $(b)$ $\big\|b^k_i\big\|_{L^\infty}\le C2^k$, where $C>0$ is independent of $i$ and $k\,;$

\quad $(c)$ $\int_{\mathbb R^n}b^k_i(x)x^\gamma\,dx=0$ for every $i,k$ and every multi-index $\gamma$ with $|\gamma|\leq[n(1/p-1)]$.

Conversely, if $f\in\mathscr S'(\mathbb R^n)$ has a decomposition satisfying $(i)$ and $(ii)$, then $f\in WH^p(\mathbb R^n)$. Moreover, we have $\big\|f\big\|^p_{WH^p}\sim c.$
\end{theorem}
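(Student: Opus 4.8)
The plan is to transplant the real-variable Calder\'on--Zygmund machinery behind the atomic decomposition of $H^p$ (Theorem~\ref{thm:Hardy}) to the weak setting, following Fefferman--Soria and Liu \cite{rfefferman,liu} (see also \cite{lu}); the one genuinely new point is to make every estimate uniform in the truncation level.

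\emph{Necessity.} Let $f\in WH^p(\mathbb R^n)$ and $c=\|f\|_{WH^p}^p$. For $k\in\mathbb Z$ set $\Omega_k=\{x\in\mathbb R^n:Gf(x)>2^k\}$; these are open, nested ($\Omega_{k+1}\subset\Omega_k$), and, since $Gf\in WL^p(\mathbb R^n)$, satisfy $|\Omega_k|\le c\,2^{-kp}$. Take a Whitney decomposition $\{Q_i^k\}_i$ of $\Omega_k$ (dyadic, pairwise disjoint, $\mathrm{diam}\,Q_i^k\sim\mathrm{dist}(Q_i^k,\Omega_k^c)$, with fixed small dilates $\widetilde Q_i^k$ having bounded overlap and still contained in $\Omega_k$) and a smooth partition of unity $\{\zeta_i^k\}$ subordinate to $\{\widetilde Q_i^k\}$. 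Carrying out the Calder\'on--Zygmund decomposition of the distribution $f$ at height $2^k$ in the usual way, $f=g_k+\sum_i b_i^k$ with $b_i^k=(f-P_i^k)\zeta_i^k$ and $P_i^k$ the polynomial of degree $\le N=[n(1/p-1)]$ making $\int b_i^k(x)x^\gamma\,dx=0$ for $|\gamma|\le N$, one obtains — using the grand maximal function control of $WH^p$ and the closeness of each $\widetilde Q_i^k$ to a point of $\Omega_k^c$ — the uniform bounds $\|b_i^k\|_{L^\infty}\le C2^k$ and $\|g_k\|_{L^\infty}\le C2^k$. Since $\Omega_k\uparrow\mathbb R^n$ as $k\to-\infty$ and $|\Omega_k|\to0$ as $k\to+\infty$, one checks that $g_k\to f$ (resp.\ $g_k\to0$) in $\mathscr S'(\mathbb R^n)$ as $k\to+\infty$ (resp.\ $k\to-\infty$), so $f=\sum_{k\in\mathbb Z}(g_{k+1}-g_k)$ in $\mathscr S'(\mathbb R^n)$. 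Putting $f_k:=g_{k+1}-g_k=b_k-b_{k+1}$ gives (i); regrouping $f_k$ into the pieces inherited from $b_k-b_{k+1}$ and relabelling the relevant dilated Whitney cubes as $\{Q_i^k\}_i$ yields the cancellation~(c) and, as $\|b_k\|_{L^\infty},\|b_{k+1}\|_{L^\infty}\lesssim2^k$, the bound~(b); finally $\sum_i|Q_i^k|\lesssim|\Omega_k|\le c\,2^{-kp}$ and the bounded overlap of Whitney cubes give~(a) with $c\sim\|f\|_{WH^p}^p$.

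\emph{Sufficiency.} Conversely, given $f=\sum_kf_k$, $f_k=\sum_ib_i^k$, satisfying (a)--(c) with constant $c$, fix $\lambda>0$, choose $k_0$ with $2^{k_0}\le\lambda<2^{k_0+1}$, and split $f=F_1+F_2$, $F_1=\sum_{k\le k_0}f_k$, $F_2=\sum_{k>k_0}f_k$, so that
\[
\big|\{x:Gf(x)>\lambda\}\big|\le\big|\{GF_1>\lambda/2\}\big|+\big|\{GF_2>\lambda/2\}\big|.
\]
Since $Gf\le CMf$ pointwise ($M$ the Hardy--Littlewood maximal operator), $G$ is bounded on $L^2(\mathbb R^n)$; and by (a), (b), $\|f_k\|_{L^2}^2\le\|f_k\|_{L^\infty}^2|\bigcup_iQ_i^k|\lesssim2^{2k}c\,2^{-kp}$, so Minkowski's inequality and $p\le1$ give $\|F_1\|_{L^2}\lesssim c^{1/2}\sum_{k\le k_0}2^{k(1-p/2)}\lesssim c^{1/2}\lambda^{1-p/2}$, whence $|\{GF_1>\lambda/2\}|\lesssim\lambda^{-2}\|F_1\|_{L^2}^2\lesssim c\,\lambda^{-p}$. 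For $F_2$, set $E=\bigcup_{k>k_0}\bigcup_i\tau Q_i^k$ for a fixed dilation factor $\tau>1$; by (a), $|E|\lesssim\sum_{k>k_0}c\,2^{-kp}\lesssim c\,\lambda^{-p}$. For $x\notin E$, the moment conditions~(c) and a Taylor-remainder estimate bound the contribution of $b_i^k$ to $(\varphi_t*F_2)(y)$ ($|y-x|<t$, $\varphi\in\mathscr A_N$) by $C2^k\ell(Q_i^k)^{n+N+1}(\ell(Q_i^k)+|x-c_i^k|)^{-(n+N+1)}$; summing over $i$ (using the bounded overlap in (a)) dominates the level-$k$ total by $C2^k[M(\sum_i\chi_{Q_i^k})(x)]^{1+(N+1)/n}$, and then summing over $k>k_0$ and applying Chebyshev shows $|\{x\notin E:GF_2(x)>\lambda/2\}|\lesssim c\,\lambda^{-p}$. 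Collecting the estimates and taking $\sup_{\lambda>0}$ gives $\|f\|_{WH^p}^p=\|Gf\|_{WL^p}^p\lesssim c$.

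\emph{Main difficulty.} The technical heart is the uniform Calder\'on--Zygmund decomposition in the first part: producing at every height $2^k$ simultaneously the bad pieces $b_i^k$ with cancellation up to order $N$ and $\|b_i^k\|_{L^\infty}\lesssim2^k$, the good part with $\|g_k\|_{L^\infty}\lesssim2^k$, and arranging the Whitney cubes at consecutive levels so that $f_k=b_k-b_{k+1}$ is genuinely localized in $\Omega_k$. This is where the grand maximal function characterization of $WH^p$ — not merely the size bound $Gf\in WL^p$ — is essential, via a Fefferman--Stein type comparison between the local behaviour of $f$ near a Whitney cube and the value of $Gf$ at a nearby point of $\Omega_k^c$; it is the one step that is not formal. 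Once it is established, the telescoping convergence and the entire sufficiency argument are routine and quantitative.
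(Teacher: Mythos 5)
The paper does not prove this statement at all: Theorem \ref{thm:weak Hardy} is quoted as a known result with references to Fefferman--Soria, Liu, and Lu, so there is no in-paper argument to compare yours against. Your sketch reconstructs the standard proof from those references (Whitney decomposition of the level sets of the grand maximal function, Calder\'on--Zygmund decomposition at every height $2^k$, telescoping $f=\sum_k(g_{k+1}-g_k)$, and the converse via the $k_0$-splitting), and the sufficiency half is essentially the computation the paper itself performs inside the proofs of Theorems \ref{mainthm:4}--\ref{mainthm:6}. So the architecture is right; but one step in the necessity part is genuinely misstated.

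The Calder\'on--Zygmund bad parts $b_i^k=(f-P_i^k)\zeta_i^k$ are \emph{not} bounded by $C2^k$: $f$ is only a tempered distribution, and the information $Gf\le 2^k$ at a point of $\Omega_k^c$ near $Q_i^k$ controls averages of $f$ at scale $\ell(Q_i^k)$ --- hence the polynomials $P_i^k$ and the good part $g_k$ --- but not $f$ itself inside the Whitney cubes. Consequently the claims $\|b_i^k\|_{L^\infty}\le C2^k$ and $\|b_k\|_{L^\infty}\lesssim 2^k$, from which you derive property $(b)$, are false as stated. The correct source of $(b)$ is that $f_k=g_{k+1}-g_k$ is a difference of two good parts, each bounded by $C2^{k+1}$; the pieces $b_i^k$ of the theorem are then produced by multiplying the bounded function $f_k$ by the level-$k$ partition of unity and subtracting the double-indexed polynomial corrections (the $P_{ij}^{k+1}$ of Fefferman--Soria/Liu) needed to restore the vanishing moments of order $\le N$ without destroying either the $L^\infty$ bound or the bounded overlap of the supports. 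You correctly flag this regrouping as the technical heart, but your sketch conflates the CZ bad parts with the final atoms and attributes the $L^\infty$ bound to objects that do not have it; that gap needs to be closed before the necessity direction is a proof.
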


Throughout this paper, the letter $C$ always denote a positive constant independent of the main parameters involved, but it may be different from line to line. Moreover, we use $A\sim B$ to mean the equivalence of $A$ and $B$; that is, there exist two positive constants $C_1$ and $C_2$ independent of $A$, $B$ such that $C_1A\le B\le C_2A$.

\section{Boundedness on the Hardy spaces $H^p(\mathbb R^n)$}

\subsection{Proof of Theorem \ref{mainthm:1}}

\begin{proof}[Proof of Theorem $\ref{mainthm:1}$]
First we observe that for $n/{(n+1)}<p\leq1$, one has $[n(1/p-1)]=0$. Then in view of Theorem \ref{thm:Hardy} and Theorem A, it suffices to show that for any $(p,2,0)$-atom $a$, there exists a constant $C>0$ independent of $a$ such that $\big\|T_\Omega(a)\big\|_{L^p}\leq C$. Let $a(x)$ be a $(p,2,0)$-atom with supp\,$a\subseteq Q=Q(x_0,r_Q)$, and let $Q^*=2\sqrt nQ$, where $Q(x_0,r_Q)$ denotes the cube centered at $x_0$ with side length $r_Q$, all cubes are assumed to have their sides parallel to the coordinate axes and $\lambda Q$ denotes the cube concentric with $Q$ whose side length is $\lambda$ times as long. Then we write
\begin{equation*}
\begin{split}
\big\|T_\Omega(a)\big\|^p_{L^p}=\int_{\mathbb R^n}\big|T_\Omega(a)(x)\big|^p\,dx&=\int_{Q^*}\big|T_\Omega(a)(x)\big|^p\,dx
+\int_{(Q^*)^c}\big|T_\Omega(a)(x)\big|^p\,dx\\
&:=I_1+I_2.
\end{split}
\end{equation*}
Since the condition (\ref{L-logL}) implies that $\Omega(x,z)$ is bounded and $\Omega(x,z)\in L^{\infty}(\mathbb R^n)\times L^r(S^{n-1})$ for any $1<r<\infty$, then by using H\"older's inequality with exponent $\nu=2/p$, Theorem A and the size condition of atom $a$, we get
\begin{align}\label{thm1:I1}
I_1&\le\left(\int_{Q^*}\big|T_{\Omega}(a)(x)\big|^2\,dx\right)^{p/2}\left(\int_{Q^*}1\,dx\right)^{1-p/2}\notag\\
&\le C\cdot\big\|T_{\Omega}(a)\big\|^p_{L^2}|Q|^{1-p/2}\notag\\
&\le C\cdot\|a\|^p_{L^2}|Q|^{1-p/2}\notag\\
&\le C.
\end{align}
Let us now turn to estimate the other term $I_2$.By the vanishing moment condition of atom $a$, for any $x\in(Q^*)^c$, we have
\begin{equation*}
\begin{split}
\big|T_{\Omega}(a)(x)\big|&=
\left|\int_{Q}\bigg[\frac{\Omega(x,x-y)}{|x-y|^n}-\frac{\Omega(x,x-x_0)}{|x-x_0|^n}\bigg]a(y)\,dy\right|\\
&\le C\int_{Q}\left|\frac{1}{|x-y|^n}-\frac{1}{|x-x_0|^n}\right|\big|a(y)\big|\,dy\\
&\ +\int_{Q}\frac{|\Omega(x,x-y)-\Omega(x,x-x_0)|}{|x-x_0|^n}\big|a(y)\big|\,dy\\
&= \mbox{\upshape I+II}.
\end{split}
\end{equation*}
For the term I, notice that when $x\in(Q^*)^c$ and $y\in Q$, then we have $|x-y|\sim |x-x_0|$. Hence, we apply the mean value theorem and the size condition of atom $a$ to obtain
\begin{equation*}
\begin{split}
\mbox{\upshape I}&\le C\int_{Q}\frac{|y-x_0|}{|x-x_0|^{n+1}}\big|a(y)\big|\,dy\\
&\le C\cdot\frac{r_Q}{|x-x_0|^{n+1}}\int_{Q}\big|a(y)\big|\,dy\\
&\le C\cdot\frac{r_Q}{|x-x_0|^{n+1}}\cdot\|a\|_{L^2}|Q|^{1/2}\\
&\le C\cdot\frac{r_Q}{|x-x_0|^{n+1}}\cdot|Q|^{1-1/p}.
\end{split}
\end{equation*}
For the term II, in this case, we still have $|x-y|\sim |x-x_0|$ if $x\in(Q^*)^c$ and $y\in Q$. Thus
\begin{equation}\label{ineq-1}
\left|\frac{x-y}{|x-y|}-\frac{x-x_0}{|x-x_0|}\right|\le C\cdot\frac{r_Q}{|x-x_0|}.
\end{equation}
So by the condition (\ref{L-logL}) and the inequality (\ref{ineq-1}), we deduce that for any $x\in(Q^*)^c$,
\begin{align}\label{omega}
\Big|\Omega(x,x-y)-\Omega(x,x-x_0)\Big|&=\left|\Omega\Big(x,\frac{x-y}{|x-y|}\Big)
-\Omega\Big(x,\frac{x-x_0}{|x-x_0|}\Big)\right|\notag\\
&\leq\left(\frac{r_Q}{|x-x_0|}\right)^{\sigma_1}\cdot\frac{C}{\big(\log\frac{|x-x_0|}{r_Q}\big)^{\sigma_2}}.
\end{align}
Substituting the above inequality (\ref{omega}) into II and then using the size condition of atom $a$, we obtain
\begin{equation*}
\begin{split}
\mbox{\upshape II}&\le \left(\frac{r_Q}{|x-x_0|}\right)^{\sigma_1}\cdot\frac{C}{|x-x_0|^n\big(\log\frac{|x-x_0|}{r_Q}\big)^{\sigma_2}}
\int_{Q}\big|a(y)\big|\,dy\\
&\le \left(\frac{r_Q}{|x-x_0|}\right)^{\sigma_1}\cdot\frac{C}{|x-x_0|^n\big(\log\frac{|x-x_0|}{r_Q}\big)^{\sigma_2}}\cdot\|a\|_{L^2}|Q|^{1/2}\\
&\le C\cdot\frac{(r_Q)^{\sigma_1}}{|x-x_0|^{n+\sigma_1}\big(\log\frac{|x-x_0|}{r_Q}\big)^{\sigma_2}}\cdot|Q|^{1-1/p}.
\end{split}
\end{equation*}
Summarizing the above two estimates for I and II, we thus have
\begin{equation*}
\begin{split}
I_2&\le C\cdot (r_Q)^p|Q|^{p-1}\int_{(Q^*)^c}\frac{dx}{|x-x_0|^{p(n+1)}}\\
&+ C\cdot(r_Q)^{p\sigma_1}|Q|^{p-1}\int_{(Q^*)^c}\frac{dx}{|x-x_0|^{p(n+\sigma_1)}\big(\log\frac{|x-x_0|}{r_Q}\big)^{p\sigma_2}}\\
&=\mbox{\upshape III+IV}.
\end{split}
\end{equation*}
If we rewrite the above two integrals in polar coordinates, then we can get
\begin{align}\label{thm1:III}
\mbox{\upshape III}&\le C\cdot (r_Q)^{p+pn-n}\int_{|y|\geq(\sqrt{n})r_Q}\frac{dy}{|y|^{p(n+1)}}\notag\\
&\le C\cdot (r_Q)^{p+pn-n}\int_{(\sqrt{n})r_Q}^\infty\frac{s^{n-1}}{s^{p(n+1)}}ds\notag\\
&\le C,
\end{align}
where the last inequality holds since $p>n/{(n+1)}$, and
\begin{align*}
\mbox{\upshape IV}&\le C\cdot (r_Q)^{p\sigma_1+pn-n}\sum_{\ell=1}^\infty\int_{(\sqrt{n})^{\ell}r_Q\leq |y|<(\sqrt{n})^{\ell+1}r_Q}
\frac{dy}{|y|^{p(n+\sigma_1)}\big(\log\frac{|y|}{r_Q}\big)^{p\sigma_2}}\\
&\leq C\cdot (r_Q)^{p\sigma_1+pn-n}\sum_{\ell=1}^\infty\frac{1}{(\log\sqrt{n}\cdot\ell)^{p\sigma_2}}
\int_{(\sqrt{n})^{\ell}r_Q}^{(\sqrt{n})^{\ell+1}r_Q}\frac{s^{n-1}}{s^{p(n+\sigma_1)}}ds.
\end{align*}

Let us now consider the following two cases:

$(i)$ $p(n+\sigma_1)=n$ and $p\sigma_2>1$. Then we have
\begin{align}\label{thm1:IV1}
\mbox{\upshape IV}&\le C\sum_{\ell=1}^\infty\frac{\log\sqrt{n}}{(\log\sqrt{n}\cdot\ell)^{p\sigma_2}}\notag\\
&\le C.
\end{align}

$(ii)$ $p(n+\sigma_1)>n$ and $\sigma_2\geq0$. So we have
\begin{align}\label{thm1:IV2}
\mbox{\upshape IV}&\leq C\cdot (r_Q)^{p\sigma_1+pn-n}\sum_{\ell=1}^\infty\frac{1}{(\log\sqrt{n}\cdot\ell)^{p\sigma_2}}
\cdot\left[\frac{1}{(\sqrt{n})^{\ell}r_Q}\right]^{p(n+\sigma_1)-n}\int_{(\sqrt{n})^{\ell}r_Q}^{(\sqrt{n})^{\ell+1}r_Q}\frac{ds}{s}\notag\\
&\leq C\cdot (r_Q)^{p\sigma_1+pn-n}\sum_{\ell=1}^\infty\frac{\log\sqrt{n}}{(\log\sqrt{n}\cdot\ell)^{p\sigma_2}}
\cdot\left[\frac{1}{(\sqrt{n})^{\ell}r_Q}\right]^{p(n+\sigma_1)-n}\notag\\
&\le C\sum_{\ell=1}^\infty\left[\frac{1}{(\sqrt n)^{p(n+\sigma_1)-n}}\right]^{\ell}\notag\\
&\le C,
\end{align}
where the last series is convergent since $\big(\sqrt n\big)^{p(n+\sigma_1)-n}>1$. Combining the inequality (\ref{thm1:I1}) with (\ref{thm1:III})--(\ref{thm1:IV2}), we then complete the proof of Theorem \ref{mainthm:1}.
\end{proof}

\subsection{Proof of Theorem \ref{mainthm:2}}

\begin{proof}[Proof of Theorem $\ref{mainthm:2}$]
Observe that $[n(1/p-1)]=0$ by our assumptions. For $0<\alpha<n$, we first choose $p_1$ and $q_1$ in such a way that $1<p_1<n/{\alpha}$ and $1/{q_1}=1/{p_1}-\alpha/n$. Then by Theorem \ref{thm:Hardy} and Theorem B, it suffices to verify that for any $(p,p_1,0)$-atom $a$, there exists a constant $C>0$ independent of $a$ such that $\big\|T_{\Omega,\alpha}(a)\big\|_{L^q}\leq C$. Let $a(x)$ be a $(p,p_1,0)$-atom with supp\,$a\subseteq Q=Q(x_0,r_Q)$, and let $Q^*=2\sqrt nQ$. One writes
\begin{equation*}
\begin{split}
\big\|T_{\Omega,\alpha}(a)\big\|_{L^q}&=\left(\int_{\mathbb R^n}\big|T_{\Omega,\alpha}(a)(x)\big|^q\,dx\right)^{1/q}\\
&=\left(\int_{Q^*}\big|T_{\Omega,\alpha}(a)(x)\big|^q\,dx\right)^{1/q}
+\left(\int_{(Q^*)^c}\big|T_{\Omega,\alpha}(a)(x)\big|^q\,dx\right)^{1/q}\\
&:=J_1+J_2.
\end{split}
\end{equation*}
Since the condition (\ref{L-logL}) implies that $\Omega(x,z)$ is bounded and $\Omega(x,z)\in L^{\infty}(\mathbb R^n)\times L^r(S^{n-1})$ for all $r>p'_1>1$, notice also that $q_1>q$ and $1/p-1/q=1/{p_1}-1/{q_1}=\alpha/n$. Then by using H\"older's inequality with exponent $\nu={q_1}/q>1$, Theorem B and the size condition of atom $a$, we obtain
\begin{align}\label{thm2:J1}
J_1&\leq\left(\int_{Q^*}\big|T_{\Omega,\alpha}(a)(x)\big|^{q_1}\,dx\right)^{1/{q_1}}
\left(\int_{Q^*}1\,dx\right)^{(1-q/{q_1})\cdot1/q}\notag\\
&\leq C\cdot\big\|T_{\Omega,\alpha}(a)\big\|_{L^{q_1}}\big|Q\big|^{1/q-1/{q_1}}\notag\\
&\leq C\cdot\big\|a\big\|_{L^{p_1}}\big|Q\big|^{1/q-1/{q_1}}\notag\\
&\leq C\cdot\big|Q\big|^{1/{p_1}-1/p+1/q-1/{q_1}}\notag\\
&\leq C.
\end{align}
Now let us consider the other term $J_2$. By the vanishing moment condition of atom $a$, for any $x\in(Q^*)^c$, we have
\begin{equation*}
\begin{split}
\big|T_{\Omega,\alpha}(a)(x)\big|&=
\left|\int_{Q}\bigg[\frac{\Omega(x,x-y)}{|x-y|^{n-\alpha}}-\frac{\Omega(x,x-x_0)}{|x-x_0|^{n-\alpha}}\bigg]
a(y)\,dy\right|\\
&\le C\int_{Q}\left|\frac{1}{|x-y|^{n-\alpha}}-\frac{1}{|x-x_0|^{n-\alpha}}\right|
\big|a(y)\big|\,dy\\
&\ +\int_{Q}\frac{|\Omega(x,x-y)-\Omega(x,x-x_0)|}{|x-x_0|^{n-\alpha}}\big|a(y)\big|\,dy\\
&= \mbox{\upshape I+II}.
\end{split}
\end{equation*}
For the term I, note that when $x\in(Q^*)^c$ and $y\in Q$, then we have $|x-y|\sim |x-x_0|$. Applying the mean value theorem and the size condition of atom $a$, we get
\begin{equation*}
\begin{split}
\mbox{\upshape I}&\le C\int_{Q}\frac{|y-x_0|}{|x-x_0|^{n-\alpha+1}}\big|a(y)\big|\,dy\\
&\le C\cdot\frac{r_Q}{|x-x_0|^{n-\alpha+1}}\int_{Q}\big|a(y)\big|\,dy\\
&\le C\cdot\frac{r_Q}{|x-x_0|^{n-\alpha+1}}\cdot\big\|a\big\|_{L^{p_1}}\big|Q\big|^{1/{p'_1}}\\
&\le C\cdot\frac{r_Q}{|x-x_0|^{n-\alpha+1}}\cdot|Q|^{1-1/p}.
\end{split}
\end{equation*}
For the term II, as above we know that $|x-y|\sim |x-x_0|$ if $x\in(Q^*)^c$ and $y\in Q$. Thus, it follows from the previous inequalities (\ref{ineq-1}) and (\ref{omega}) that
\begin{equation*}
\begin{split}
\mbox{\upshape II}&\le \left(\frac{r_Q}{|x-x_0|}\right)^{\sigma_1}\cdot\frac{C}{|x-x_0|^{n-\alpha}\big(\log\frac{|x-x_0|}{r_Q}\big)^{\sigma_2}}
\int_{Q}\big|a(y)\big|\,dy\\
&\le \left(\frac{r_Q}{|x-x_0|}\right)^{\sigma_1}\cdot\frac{C}{|x-x_0|^{n-\alpha}\big(\log\frac{|x-x_0|}{r_Q}\big)^{\sigma_2}}
\cdot\big\|a\big\|_{L^{p_1}}\big|Q\big|^{1/{p'_1}}\\
&\le \left(\frac{r_Q}{|x-x_0|}\right)^{\sigma_1}\cdot\frac{C}{|x-x_0|^{n-\alpha}\big(\log\frac{|x-x_0|}{r_Q}\big)^{\sigma_2}}
\cdot|Q|^{1-1/p}.
\end{split}
\end{equation*}
Summing up the above two estimates for I and II, we thus obtain
\begin{equation*}
\begin{split}
J_2&\le C\cdot(r_Q)\cdot|Q|^{1-1/p}\left(\int_{(Q^*)^c}\frac{dx}{|x-x_0|^{q(n-\alpha+1)}}\right)^{1/q}\\
&+ C\cdot(r_Q)^{\sigma_1}\cdot|Q|^{1-1/p}\Bigg(\int_{(Q^*)^c}\frac{dx}{|x-x_0|^{q(n-\alpha+\sigma_1)}
\big(\log\frac{|x-x_0|}{r_Q}\big)^{q\sigma_2}}\Bigg)^{1/q}\\
&=\mbox{\upshape III+IV}.
\end{split}
\end{equation*}
Observe that $q>n/{(n-\alpha+1)}$ when $n/{(n+1)}<p\leq1$ and $1/q=1/p-\alpha/n$. We then use the polar coordinates for integrals to obtain
\begin{align}\label{thm2:III}
\mbox{\upshape III}&\leq C\cdot(r_Q)^{n+1-n/p}\left(\int_{|y|\geq(\sqrt{n})r_Q}\frac{dy}{|y|^{q(n-\alpha+1)}}\right)^{1/q}\notag\\
&\leq C\cdot(r_Q)^{n+1-n/p}\left(\int_{(\sqrt{n})r_Q}^\infty\frac{s^{n-1}}{s^{q(n-\alpha+1)}}ds\right)^{1/q}\notag\\
&\leq C.
\end{align}
and
\begin{align*}
\mbox{\upshape IV}&\le C\cdot(r_Q)^{n+\sigma_1-n/p}\Bigg(\sum_{\ell=1}^\infty\int_{(\sqrt{n})^{\ell}r_Q\leq |y|<(\sqrt{n})^{\ell+1}r_Q}
\frac{dy}{|y|^{q(n-\alpha+\sigma_1)}\big(\log\frac{|y|}{r_Q}\big)^{q\sigma_2}}\Bigg)^{1/q}\notag\\
&\le C\cdot(r_Q)^{n+\sigma_1-n/p}\Bigg(\sum_{\ell=1}^\infty\frac{1}{(\log\sqrt{n}\cdot\ell)^{q\sigma_2}}
\int_{(\sqrt{n})^{\ell}r_Q}^{(\sqrt{n})^{\ell+1}r_Q}\frac{s^{n-1}}{s^{q(n-\alpha+\sigma_1)}}ds\Bigg)^{1/q}.\notag\\
\end{align*}

We now consider the following two cases:

$(i)$ $q(n-\alpha+\sigma_1)=n$ and $q\sigma_2>1$. Then we have $n+\sigma_1=n/p$. Thus
\begin{align}\label{thm2:IV1}
\mbox{\upshape IV}&\le C\cdot(r_Q)^{n+\sigma_1-n/p}
\Bigg(\sum_{\ell=1}^\infty\frac{\log\sqrt{n}}{(\log\sqrt{n}\cdot\ell)^{q\sigma_2}}\Bigg)^{1/q}\notag\\
&\le C.
\end{align}

$(ii)$ $q(n-\alpha+\sigma_1)>n$ and $\sigma_2\geq0$.
\begin{align}\label{thm2:IV2}
\mbox{\upshape IV}&\le C\cdot (r_Q)^{n+\sigma_1-n/p}\Bigg(\sum_{\ell=1}^\infty\frac{\log\sqrt{n}}{(\log\sqrt{n}\cdot\ell)^{q\sigma_2}}
\cdot\left[\frac{1}{(\sqrt{n})^{\ell}r_Q}\right]^{q(n-\alpha+\sigma_1)-n}\Bigg)^{1/q}\notag\\
&\le C\cdot (r_Q)^{n+\sigma_1-n/p}\cdot\left(\frac{1}{r_Q}\right)^{n-\alpha+\sigma_1-n/q}
\Bigg(\sum_{\ell=1}^\infty\left[\frac{1}{(\sqrt n)^{q(n-\alpha+\sigma_1)-n}}\right]^{\ell}\Bigg)^{1/q}\notag\\
&\le C,
\end{align}
where in the last inequality we have used the facts that $\big(\sqrt n\big)^{q(n-\alpha+\sigma_1)-n}>1$ and $n/p=\alpha+n/q$. Therefore, by combining the inequality (\ref{thm2:J1}) with (\ref{thm2:III})--(\ref{thm2:IV2}), we then finish the proof of Theorem \ref{mainthm:2}.
\end{proof}

\subsection{Proof of Theorem \ref{mainthm:3}}

In \cite{wang}, we have already proved the following result, which will be used in the proof of our main theorem.

\begin{theorem}\label{Mar:L2}
Suppose that $1\leq \rho<n$, $\Omega(x,z)\in L^\infty(\mathbb R^n)\times L^r(S^{n-1})$ with $r>{2(n-1)}/n$, and satisfies $(\ref{cancel})$. Then there exists a constant $C>0$ independent of $f$ such that
\begin{equation*}
\big\|\mu^{\rho}_{\Omega}(f)\big\|_{L^2}\le C\big\|f\big\|_{L^2}.
\end{equation*}
\end{theorem}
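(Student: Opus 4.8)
The plan is to prove Theorem~\ref{Mar:L2} by the Calder\'on--Zygmund method of spherical harmonics: expand $\Omega(x,\cdot)$ on $S^{n-1}$, write $\mu^{\rho}_{\Omega}$ as a superposition over the spherical degrees of parametric Marcinkiewicz integrals whose kernels are single spherical harmonics of convolution type, and treat each such piece by Plancherel's theorem. For each fixed $x\in\mathbb R^n$ I would write
\[
\Omega(x,z')=\sum_{m=1}^{\infty}\sum_{j=1}^{d_m}a_{m,j}(x)\,Y_{m,j}(z'),\qquad a_{m,j}(x)=\int_{S^{n-1}}\Omega(x,z')\,\overline{Y_{m,j}(z')}\,d\sigma(z'),
\]
where $\{Y_{m,j}\}_{j=1}^{d_m}$ is an orthonormal basis of the spherical harmonics of degree $m$, $d_m\sim m^{n-2}$, and the cancellation condition $(\ref{cancel})$ kills the degree-zero term, so the sum runs over $m\ge1$. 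I would record the addition formula $\sum_{j=1}^{d_m}|Y_{m,j}(\theta)|^2\equiv d_m/|S^{n-1}|$, the identity $\big(\sum_{j=1}^{d_m}|a_{m,j}(x)|^2\big)^{1/2}=\|P_m\Omega(x,\cdot)\|_{L^2(S^{n-1})}$ (with $P_m$ the projection onto the degree-$m$ harmonics), and the Calder\'on--Zygmund estimates bounding the degree-$m$ blocks of $\Omega(x,\cdot)$ in terms of $\|\Omega\|_{L^{\infty}(\mathbb R^n)\times L^r(S^{n-1})}$; it is these estimates, valid in the range $r>2(n-1)/n$, together with the orthogonality relation $\sum_m\|P_m\Omega(x,\cdot)\|_{L^2(S^{n-1})}^2=\|\Omega(x,\cdot)\|_{L^2(S^{n-1})}^2$ (usable directly when $r\ge2$), that explain the hypothesis on $r$.

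By Minkowski's inequality in the norm of $L^2\big((0,\infty),dt/t^{2\rho+1}\big)$ followed by the Cauchy--Schwarz inequality in $j$, one gets the pointwise bound
\[
\mu^{\rho}_{\Omega}(f)(x)\le\sum_{m=1}^{\infty}\Big(\sum_{j=1}^{d_m}|a_{m,j}(x)|^2\Big)^{1/2}\Big(\sum_{j=1}^{d_m}\mu^{\rho}_{m,j}(f)(x)^2\Big)^{1/2},
\]
where $\mu^{\rho}_{m,j}$ is the parametric Marcinkiewicz integral with the convolution kernel $Y_{m,j}(z')/|z|^{n-\rho}$, which has vanishing integral over $S^{n-1}$ since $m\ge1$. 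Taking $L^2$ norms and using the triangle inequality, it remains, for each $m$, to estimate $\big\|(\sum_j|a_{m,j}|^2)^{1/2}\big\|_{L^{\infty}(\mathbb R^n)}$ and $\big\|(\sum_j\mu^{\rho}_{m,j}(f)^2)^{1/2}\big\|_{L^2}$, and then to sum over $m$.

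For the second factor I would write $\mu^{\rho}_{m,j}(f)(x)^2=\int_0^{\infty}|K^{m,j}_t*f(x)|^2\,dt/t^{2\rho+1}$ with $K^{m,j}_t(z)=|z|^{\rho-n}Y_{m,j}(z')\chi_{\{|z|\le t\}}(z)$, and apply Fubini and Plancherel together with the homogeneity $\widehat{K^{m,j}_t}(\xi)=t^{\rho}\widehat{K^{m,j}_1}(t\xi)$ and the Bochner/Funk--Hecke identity $\widehat{K^{m,j}_1}(\eta)=Y_{m,j}(\eta')H_m(|\eta|)$, where $H_m$ is an explicit radial profile written through the Bessel function $J_{m+n/2-1}$; using the addition formula this gives
\[
\sum_{j=1}^{d_m}\big\|\mu^{\rho}_{m,j}(f)\big\|_{L^2}^2=\frac{d_m}{|S^{n-1}|}\bigg(\int_0^{\infty}|H_m(s)|^2\,\frac{ds}{s}\bigg)\big\|f\big\|_{L^2}^2.
\]
The integral $\int_0^{\infty}|H_m(s)|^2\,ds/s$ is analysed in three regimes: it is harmless near $s=0$, since $\widehat{K^{m,j}_1}(0)=0$ (again by the cancellation); for $s$ large one uses $|J_{m+n/2-1}(t)|\lesssim t^{-1/2}$ together with the fact that the Fourier transform of the homogeneous kernel $|z|^{\rho-n}Y_{m,j}(z')$ equals $c\,\gamma_m|\eta|^{-\rho}Y_{m,j}(\eta')$ with $|\gamma_m|\lesssim m^{-n/2}$; and in the transition range $s\sim m$ one uses the uniform asymptotics of Bessel functions of large order. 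This yields a bound for $\int_0^{\infty}|H_m(s)|^2\,ds/s$, hence for $\big(\sum_j\|\mu^{\rho}_{m,j}(f)\|_{L^2}^2\big)^{1/2}$, that decays in $m$ essentially like $m^{-1}\|f\|_{L^2}$ (up to at most a logarithmic factor). Combining this with the control of $\big\|(\sum_j|a_{m,j}|^2)^{1/2}\big\|_{L^{\infty}(\mathbb R^n)}$ coming from the Calder\'on--Zygmund block estimates (and, for $r\ge2$, from the orthogonality relation above), the resulting series in $m$ converges under the hypothesis $r>2(n-1)/n$, which finishes the proof.

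The step I expect to be the main obstacle is the quantitative control of $\int_0^{\infty}|H_m(s)|^2\,ds/s$ with the correct power of $m$, together with squeezing the endpoint $r=2(n-1)/n$ out of the degree-summation, where one is at the edge of convergence. For the first, the uniform-in-order asymptotics of $J_{\nu}$ across its oscillatory, transition, and exponentially small regimes must be used with care, as must the handling of the weight $dt/t^{2\rho+1}$ and of the truncation $\chi_{\{|z|\le t\}}$ near $t=0$, where it is the cancellation of $\Omega$, not its size, that guarantees integrability. The rest is the classical Calder\'on--Zygmund bookkeeping that pins the critical exponent at $r>2(n-1)/n$, exactly as for the singular integral in Theorem A. Since this estimate is needed here only as the $L^2$ input for the atomic argument in the proof of Theorem~\ref{mainthm:3}, its $L^2$ form rather than an $L^p$ version is all that is required, and it was established in this form in \cite{wang}.
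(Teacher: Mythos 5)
The first thing to say is that the paper contains no proof of Theorem \ref{Mar:L2} for you to be measured against: it is stated as a known result, imported verbatim from the author's earlier paper \cite{wang}, and is used in this paper only as the $L^2$ input for the atomic estimate in Theorem \ref{mainthm:3} (exactly parallel to the role of Theorem A for $T_\Omega$). So the honest comparison is between your outline and the proof in the cited reference, which (following Calder\'on--Zygmund and Ding--Lin--Shao) is indeed the spherical-harmonics/Plancherel argument you describe. In that sense your route is the right one and is essentially the standard one: expand $\Omega(x,\cdot)$ in spherical harmonics, use (\ref{cancel}) to drop the degree-zero term, pull the variable coefficients $a_{m,j}(x)$ out by Minkowski and Cauchy--Schwarz, and reduce to a convolution-type square function for each degree $m$, which Plancherel converts into the radial integral $\int_0^\infty|H_m(s)|^2\,ds/s$.

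That said, what you have written is a plan rather than a proof, and the two places you yourself flag as "the main obstacle" are precisely where all of the content of the theorem lives. First, the decay of $\int_0^\infty|H_m(s)|^2\,ds/s$ in $m$: the claimed rate ("essentially like $m^{-1}\|f\|_{L^2}$" for $\bigl(\sum_j\|\mu^\rho_{m,j}(f)\|_{L^2}^2\bigr)^{1/2}$, up to a logarithm) is asserted, not derived, and the threshold $r>2(n-1)/n$ is determined by the exact power of $m$ here together with $d_m\sim m^{n-2}$; get the exponent wrong by $1/2$ and the degree-sum no longer closes at the stated range of $r$. Second, and more seriously, the hypothesis allows $2(n-1)/n<r<2$, in which case $\Omega(x,\cdot)\notin L^2(S^{n-1})$ in general, the orthogonality identity $\sum_m\|P_m\Omega(x,\cdot)\|_{L^2(S^{n-1})}^2=\|\Omega(x,\cdot)\|_{L^2(S^{n-1})}^2$ is unavailable, and the entire burden falls on the "Calder\'on--Zygmund block estimates" that you invoke by name but never state or use: one must group the degrees into dyadic blocks, apply H\"older on the sphere with exponent $r$ against $L^{r'}$ bounds for sums of squares of spherical harmonics, and balance the resulting growth in $m$ against the decay of the $H_m$ integral. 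Since you ultimately fall back on "it was established in this form in \cite{wang}", the proposal is acceptable as a reading of where the theorem comes from, but as a self-contained proof it has a genuine gap at exactly the step that produces the exponent $2(n-1)/n$.
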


\begin{proof}[Proof of Theorem $\ref{mainthm:3}$]
Again, we have $[n(1/p-1)]=0$ when $n/{(n+1)}<p\leq1$. Arguing as in the proof of Theorem \ref{mainthm:1}, by using Theorem \ref{thm:Hardy} and Theorem \ref{Mar:L2}, we only have to show that for any $(p,2,0)$-atom $a$, there exists a constant $C>0$ independent of $a$ such that $\big\|\mu^{\rho}_\Omega(a)\big\|_{L^p}\leq C$. Let $a(x)$ be a $(p,2,0)$-atom with supp\,$a\subseteq Q=Q(x_0,r_Q)$, and let $Q^*=2\sqrt nQ$. Then we have
\begin{equation*}
\begin{split}
\big\|\mu^{\rho}_\Omega(a)\big\|^p_{L^p}=\int_{\mathbb R^n}\big|\mu^{\rho}_\Omega(a)(x)\big|^p\,dx&=\int_{Q^*}\big|\mu^{\rho}_\Omega(a)(x)\big|^p\,dx
+\int_{(Q^*)^c}\big|\mu^{\rho}_\Omega(a)(x)\big|^p\,dx\\
&:=K_1+K_2.
\end{split}
\end{equation*}
Applying H\"older's inequality with exponent $\nu=2/p$, Theorem \ref{Mar:L2} and the size condition of atom $a$, we get
\begin{equation*}
\begin{split}
K_1&\le\left(\int_{Q^*}\big|\mu^{\rho}_{\Omega}(a)(x)\big|^2\,dx\right)^{p/2}
\left(\int_{Q^*}1\,dx\right)^{1-p/2}\notag\\
&\le C\cdot\big\|\mu^{\rho}_{\Omega}(a)\big\|^p_{L^2}|Q|^{1-p/2}\notag\\
&\le C\cdot\|a\|^p_{L^2}|Q|^{1-p/2}\notag\\
&\le C.
\end{split}
\end{equation*}
Let us now turn to deal with the term $K_2$. If for $1\leq\rho<n$, we set
$$\mathcal K^{\rho}(x,z)=\frac{\Omega(x,z)}{|z|^{n-\rho}}\chi_{\{|z|\le1\}}(z) \quad \mbox{and} \quad \mathcal K^{\rho}_t(x,z)=\frac{1}{t^n}\cdot \mathcal K^{\rho}\Big(x,\frac{z}{t}\Big).$$
Then
\begin{equation}
\mu^{\rho}_{\Omega}(a)(x)=\left(\int_0^\infty\bigg|\int_{\mathbb R^n}\mathcal K^{\rho}_t(x,x-y)a(y)\,dy\bigg|^2\frac{dt}{t}\right)^{1/2}.
\end{equation}
For any $x\in(Q^*)^c$, supp\,$\mathcal K^{\rho}(x,\cdot)\subseteq B(0,1)$, the unit ball in $\mathbb R^n$. Then using the vanishing moment condition of atom $a$, we have
\begin{equation*}
\begin{split}
\bigg|\int_{\mathbb R^n}\mathcal K^{\rho}_t(x,x-y)a(y)\,dy\bigg|=\ &\bigg|\int_{Q}\Big[\mathcal K^{\rho}_t(x,x-y)-\mathcal K^{\rho}_t(x,x-x_0)\Big]a(y)\,dy\bigg|\\
\le\ &\frac{C}{t^{\rho}}\cdot\int_{Q}
\bigg|\frac{1}{|x-y|^{n-\rho}}-\frac{1}{|x-x_0|^{n-\rho}}\bigg|\big|a(y)\big|\,dy\\
&+\frac{1}{t^{\rho}}\cdot\int_{Q}\frac{|\Omega(x,x-y)-\Omega(x,x-x_0)|}{|x-x_0|^{n-\rho}}
\big|a(y)\big|\,dy\\
=\ &\mbox{\upshape I+II}.
\end{split}
\end{equation*}
When $x\in(Q^*)^c$ and $y\in Q$, then $|x-y|\sim |x-x_0|$. Using the mean value theorem and the size condition of atom $a$, we obtain
\begin{equation*}
\begin{split}
\mbox{\upshape I}&\le \frac{C}{t^{\rho}}\cdot\int_{Q}\frac{|y-x_0|}{|x-x_0|^{n-\rho+1}}
\big|a(y)\big|\,dy\\
&\le C\cdot\frac{r_Q}{t^{\rho}|x-x_0|^{n-\rho+1}}\int_{Q}\big|a(y)\big|\,dy\\
&\le C\cdot\frac{r_Q}{t^{\rho}|x-x_0|^{n-\rho+1}}\cdot\|a\|_{L^2}|Q|^{1/2}\\
&\le C\cdot\frac{r_Q}{t^{\rho}|x-x_0|^{n-\rho+1}}\cdot|Q|^{1-1/p}.
\end{split}
\end{equation*}
On the other hand, from the previous inequalities (\ref{ineq-1}) and (\ref{omega}), it follows that
\begin{equation*}
\begin{split}
\mbox{\upshape II}&\le C\cdot\frac{(r_Q)^{\sigma_1}}{t^{\rho}|x-x_0|^{n-\rho+\sigma_1}\big(\log\frac{|x-x_0|}{r_Q}\big)^{\sigma_2}}
\int_{Q}\big|a(y)\big|\,dy\\
&\le C\cdot\frac{(r_Q)^{\sigma_1}}{t^{\rho}|x-x_0|^{n-\rho+\sigma_1}\big(\log\frac{|x-x_0|}{r_Q}\big)^{\sigma_2}}
\cdot\|a\|_{L^2}|Q|^{1/2}\\
&\le C\cdot\frac{(r_Q)^{\sigma_1}}{t^{\rho}|x-x_0|^{n-\rho+\sigma_1}\big(\log\frac{|x-x_0|}{r_Q}\big)^{\sigma_2}}\cdot|Q|^{1-1/p}.
\end{split}
\end{equation*}
Recall that for any fixed $x$, supp\,$\mathcal K^{\rho}(x,\cdot)\subseteq\{z\in\mathbb R^n:|z|\leq 1\}$, then for any $y\in Q$ and $x\in(Q^*)^c$, we have
\begin{equation}
t\geq|x-y|\geq|x-x_0|-|y-x_0|\geq\frac{|x-x_0|}{2}.
\end{equation}
Therefore
\begin{equation*}
\begin{split}
\big|\mu^{\rho}_\Omega(a)(x)\big|&\le C\cdot|Q|^{1-1/p}
\Bigg[\frac{r_Q}{|x-x_0|^{n-\rho+1}}+
\frac{(r_Q)^{\sigma_1}}{|x-x_0|^{n-\rho+\sigma_1}\big(\log\frac{|x-x_0|}{r_Q}\big)^{\sigma_2}}\Bigg]\\
&\times\left(\int_{\frac{|x-x_0|}{2}}^\infty\frac{dt}{t^{2\rho+1}}\right)^{1/2}\\
&\le C\cdot|Q|^{1-1/p}\Bigg[\frac{r_Q}{|x-x_0|^{n+1}}+
\frac{(r_Q)^{\sigma_1}}{|x-x_0|^{n+\sigma_1}\big(\log\frac{|x-x_0|}{r_Q}\big)^{\sigma_2}}\Bigg].
\end{split}
\end{equation*}
The rest of the proof is exactly the same as that of Theorem \ref{mainthm:1}, we can also obtain
\begin{equation*}
K_2\le C.
\end{equation*}
Summing up all the above estimates, we conclude the proof of Theorem \ref{mainthm:3}.
\end{proof}

\section{Boundedness on the weak Hardy spaces $WH^p(\mathbb R^n)$}

\subsection{Proof of Theorem \ref{mainthm:4}}

\begin{proof}[Proof of Theorem $\ref{mainthm:4}$]

For any given $\lambda>0$, we may choose $k_0\in\mathbb Z$ such that $2^{k_0}\le\lambda<2^{k_0+1}$. For every $f\in WH^p(\mathbb R^n)$, then by Theorem \ref{thm:weak Hardy}, we can write
\begin{equation*}
f=\sum_{k=-\infty}^\infty f_k=\sum_{k=-\infty}^{k_0} f_k+\sum_{k=k_0+1}^\infty f_k:=F_1+F_2,
\end{equation*}
where $F_1=\sum_{k=-\infty}^{k_0} f_k=\sum_{k=-\infty}^{k_0}\sum_i b^k_i$, $F_2=\sum_{k=k_0+1}^\infty f_k=\sum_{k=k_0+1}^\infty\sum_i b^k_i$ and $\{b^k_i\}$ satisfies $(a)$--$(c)$ in Theorem \ref{thm:weak Hardy}. Then we have \begin{equation*}
\begin{split}
&\lambda^p\cdot \big|\big\{x\in\mathbb R^n:\big|T_{\Omega}(f)(x)\big|>\lambda\big\}\big|\\
\le\ &\lambda^p\cdot \big|\big\{x\in\mathbb R^n:\big|T_{\Omega}(F_1)(x)\big|>\lambda/2\big\}\big|
+\lambda^p\cdot \big|\big\{x\in\mathbb R^n:\big|T_{\Omega}(F_2)(x)\big|>\lambda/2\big\}\big|\\
:=\ &I_1+I_2.
\end{split}
\end{equation*}
First we claim that the following inequality holds:
\begin{equation}\label{F1:L2}
\big\|F_1\big\|_{L^2}\le C\cdot\lambda^{1-p/2}\big\|f\big\|^{p/2}_{WH^p}.
\end{equation}
In fact, since supp\,$b^k_i\subseteq Q^k_i=Q\big(x^k_i,r^k_i\big)$ and $\big\|b^k_i\big\|_{L^\infty}\le C 2^k$ by Theorem \ref{thm:weak Hardy}, where $Q\big(x^k_i,r^k_i\big)$ denotes the cube centered at $x^k_i$ with side length $r^k_i$. Hence, it follows from Minkowski's inequality for integrals that
\begin{equation*}
\big\|F_1\big\|_{L^2}\leq\sum_{k=-\infty}^{k_0}\Big\|\sum_i b^k_i\Big\|_{L^2}.
\end{equation*}
For each $k\in\mathbb Z$, by using the bounded overlapping property of the cubes $\big\{Q^k_i\big\}$ and the fact that $1-p/2>0$, we thus obtain
\begin{equation*}
\begin{split}
\Big\|\sum_i b^k_i\Big\|_{L^2}&\leq\sup_i\big\|b^k_i\big\|_{L^\infty}\left(\int_{x\in\cup_i Q^k_i}1\,dx\right)^{1/2}\\
&\leq C\cdot2^k\bigg(\sum_i \big|Q^k_i\big|\bigg)^{1/2}\\
&\leq C\cdot2^{k(1-p/2)}\big\|f\big\|^{p/2}_{WH^p},
\end{split}
\end{equation*}
and
\begin{equation*}
\begin{split}
\big\|F_1\big\|_{L^2}&\le C\sum_{k=-\infty}^{k_0}2^{(k-k_0)(1-p/2)}\cdot\lambda^{1-p/2}\big\|f\big\|^{p/2}_{WH^p}\\
&= C\cdot\lambda^{1-p/2}\big\|f\big\|^{p/2}_{WH^p}\sum_{k=0}^\infty\left(\frac{\,1\,}{2}\right)^{k(1-p/2)}\\
&\le C\cdot\lambda^{1-p/2}\big\|f\big\|^{p/2}_{WH^p}.
\end{split}
\end{equation*}
By the hypothesis, we know that $T_{\Omega}$ is bounded on $L^2(\mathbb R^n)$ according to Theorem A. This fact together with Chebyshev's inequality and the inequality (\ref{F1:L2}) yields
\begin{equation*}
\begin{split}
I_1&\leq \lambda^p\cdot\frac{4}{\lambda^2}\big\|T_{\Omega}(F_1)\big\|^2_{L^2}\\
&\leq C\cdot\lambda^{p-2}\big\|F_1\big\|^2_{L^2}\\
&\leq C\big\|f\big\|^p_{WH^p}.
\end{split}
\end{equation*}
Let us now turn our attention to the estimate of $I_2$. Setting
\begin{equation*}
A_{k_0}=\bigcup_{k=k_0+1}^\infty\bigcup_i \widetilde{Q^k_i},
\end{equation*}
where $\widetilde{Q^k_i}=Q\big(x^k_i,\tau^{{p(k-k_0)}/n}(2\sqrt n)r^k_i\big)$ and $\tau$ is a fixed positive number such that $1<\tau<2$. Thus, we can further decompose $I_2$ as
\begin{equation*}
\begin{split}
I_2&\le\lambda^p\cdot \big|\big\{x\in A_{k_0}:|T_{\Omega}(F_2)(x)|>\lambda/2\big\}\big|+
\lambda^p\cdot \big|\big\{x\in (A_{k_0})^c:|T_{\Omega}(F_2)(x)|>\lambda/2\big\}\big|\\
&=I'_2+I''_2.
\end{split}
\end{equation*}
For the term $I'_2$, we can deduce that
\begin{equation*}
\begin{split}
I'_2&\le\lambda^p\sum_{k=k_0+1}^\infty\sum_i \big|\widetilde{Q^k_i}\big|\\
&\le C\cdot\lambda^p\sum_{k=k_0+1}^\infty\tau^{p(k-k_0)}\sum_i \big|Q^k_i\big|\\
&\le C\big\|f\big\|^p_{WH^p}\sum_{k=k_0+1}^\infty\Big(\frac{\tau}{\,2\,}\Big)^{p(k-k_0)}\\
&= C\big\|f\big\|^p_{WH^p}\sum_{k=1}^\infty\Big(\frac{\tau}{\,2\,}\Big)^{pk}\\
&\le C\big\|f\big\|^p_{WH^p}.
\end{split}
\end{equation*}
On the other hand, it follows directly from Chebyshev's inequality that
\begin{equation*}
\begin{split}
I''_2&\le 2^p\int_{(A_{k_0})^c}\big|T_{\Omega}(F_2)(x)\big|^p\,dx\\
&\le 2^p\sum_{k=k_0+1}^\infty\sum_i
\int_{\big(\widetilde{Q^k_i}\big)^c}\big|T_{\Omega}\big(b^k_i\big)(x)\big|^p\,dx.
\end{split}
\end{equation*}
Then, by the cancellation condition of $b^k_i\in L^\infty(\mathbb R^n)$, we get
\begin{equation*}
\begin{split}
\big|T_{\Omega}\big(b^k_i\big)(x)\big|&=
\left|\int_{Q^k_i}\bigg[\frac{\Omega(x,x-y)}{|x-y|^n}-\frac{\Omega(x,x-x^k_i)}{|x-x^k_i|^n}\bigg]
b^k_i(y)\,dy\right|\\
&\le C\int_{Q^k_i}\left|\frac{1}{|x-y|^n}-\frac{1}{|x-x^k_i|^n}\right|\big|b^k_i(y)\big|\,dy\\
&\ +\int_{Q^k_i}\frac{|\Omega(x,x-y)-\Omega(x,x-x^k_i)|}{|x-x^k_i|^n}\big|b^k_i(y)\big|\,dy\\
&= \mbox{\upshape I+II}.
\end{split}
\end{equation*}
Note that for any $y\in Q^k_i$ and $x\in\big(\widetilde{Q^k_i}\big)^c$, then $|x-y|\sim |x-x^k_i|$. This estimate together with the mean value theorem implies that
\begin{equation*}
\begin{split}
\mbox{\upshape I}&\leq C\int_{Q^k_i}\frac{|y-x^k_i|}{|x-x^k_i|^{n+1}}\big|b^k_i(y)\big|\,dy\\
&\leq C\cdot\big\|b^k_i\big\|_{L^\infty}\cdot\frac{(r^k_i)^{n+1}}{|x-x^k_i|^{n+1}}.
\end{split}
\end{equation*}
For the term II, we still have $|x-y|\sim |x-x^k_i|$, when $y\in Q^k_i$ and $x\in\big(\widetilde{Q^k_i}\big)^c$. Then we can readily see that
\begin{equation}\label{ineq-2}
\left|\frac{x-y}{|x-y|}-\frac{x-x^k_i}{|x-x^k_i|}\right|\leq C\cdot\frac{r^k_i}{|x-x^k_i|}.
\end{equation}
Hence, by the condition (\ref{L-logL}) and the inequality (\ref{ineq-2}), we deduce that for any $x\in\big(\widetilde{Q^k_i}\big)^c$,
\begin{align}\label{Omega-2}
\Big|\Omega(x,x-y)-\Omega(x,x-x^k_i)\Big|&=\left|\Omega\Big(x,\frac{x-y}{|x-y|}\Big)
-\Omega\Big(x,\frac{x-x^k_i}{|x-x^k_i|}\Big)\right|\notag\\
&\le\left(\frac{r^k_i}{|x-x^k_i|}\right)^{\sigma_1}\cdot\frac{C}{\big(\log\frac{|x-x^k_i|}{r^k_i}\big)^{\sigma_2}}.
\end{align}
So we have
\begin{equation*}
\mbox{\upshape II}\le C\cdot\big\|b^k_i\big\|_{L^\infty}
\cdot\frac{(r^k_i)^{n+\sigma_1}}{|x-x^k_i|^{n+\sigma_1}\big(\log\frac{|x-x^k_i|}{r^k_i}\big)^{\sigma_2}}.
\end{equation*}
Now denote $\tau^k_{i,\ell}=\big(\tau^{{p(k-k_0)}/n}\sqrt{n}\big)^{\ell}r^k_i$ and
$$E^k_{i,\ell}=\big\{x\in\mathbb R^n:\tau^k_{i,\ell}\le|x-x^k_i|<\tau^k_{i,\ell+1}\big\},\quad \ell=1,2,\ldots.$$
Then we split the term $I''_2$ by two parts,
\begin{equation*}
\begin{split}
I''_2&\le C\sum_{k=k_0+1}^\infty\sum_i\big\|b^k_i\big\|^p_{L^\infty}\big(r^k_i\big)^{p(n+1)}
\int_{\big(\widetilde{Q^k_i}\big)^c}\frac{dx}{|x-x^k_i|^{p(n+1)}}\\
&\ +C\sum_{k=k_0+1}^\infty\sum_i\big\|b^k_i\big\|^p_{L^\infty}\big(r^k_i\big)^{p(n+\sigma_1)}
\sum_{\ell=1}^\infty\int_{E^k_{i,\ell}}\frac{dx}{|x-x^k_i|^{p(n+\sigma_1)}\big(\log\frac{|x-x^k_i|}{r^k_i}\big)^{p\sigma_2}}\\
&=\mbox{\upshape III+IV}.
\end{split}
\end{equation*}
Let us consider the term III. Rewriting the above integral in polar coordinates and using the fact that $p(n+1)>n$, then we can get
\begin{equation*}
\begin{split}
\mbox{\upshape III}&\leq C\sum_{k=k_0+1}^\infty\sum_i\big\|b^k_i\big\|^p_{L^\infty}\big(r^k_i\big)^{p(n+1)}
\int_{|y|\geq \tau^{{p(k-k_0)}/n}\sqrt{n}r^k_i}\frac{dy}{|y|^{p(n+1)}}\\
&\leq C\sum_{k=k_0+1}^\infty\sum_i\big\|b^k_i\big\|^p_{L^\infty}\big(r^k_i\big)^{p(n+1)}
\int_{\tau^{{p(k-k_0)}/n}\sqrt{n}r^k_i}^{\infty}\frac{s^{n-1}}{s^{p(n+1)}}ds\\
&\leq C\sum_{k=k_0+1}^\infty\sum_i\big\|b^k_i\big\|^p_{L^\infty}\big(r^k_i\big)^{n}
\cdot\left[\frac{1}{\tau^{{p(k-k_0)}/n}}\right]^{p(n+1)-n}.
\end{split}
\end{equation*}
Recall that $\big\|b^k_i\big\|_{L^\infty}\le C2^k$ and $\sum_i\big|Q^k_i\big|\le C\cdot2^{-kp}\big\|f\big\|^p_{WH^p}$. Then
\begin{equation*}
\begin{split}
\mbox{\upshape III}&\leq C\sum_{k=k_0+1}^\infty 2^{kp}\cdot\left[\frac{1}{\tau^{{p(k-k_0)}/n}}\right]^{p(n+1)-n}\sum_i\big|Q^k_i\big|\\
&\leq C\big\|f\big\|^p_{WH^p}\sum_{k=k_0+1}^\infty\left[\frac{1}{\tau^{{p(k-k_0)}/n}}\right]^{p(n+1)-n}\\
&\leq C\big\|f\big\|^p_{WH^p}.
\end{split}
\end{equation*}
For the last term IV, we will also use the polar coordinates for integrals to obtain
\begin{equation*}
\begin{split}
\mbox{\upshape IV}&\leq C\sum_{k=k_0+1}^\infty\sum_i\big\|b^k_i\big\|^p_{L^\infty}\big(r^k_i\big)^{p(n+\sigma_1)}
\sum_{\ell=1}^\infty\int_{\tau^k_{i,\ell}\le|y|<\tau^k_{i,\ell+1}}
\frac{dy}{|y|^{p(n+\sigma_1)}\big(\log\frac{|y|}{r^k_i}\big)^{p\sigma_2}}\\
&\leq C\sum_{k=k_0+1}^\infty\sum_i\big\|b^k_i\big\|^p_{L^\infty}\big(r^k_i\big)^{p(n+\sigma_1)}
\sum_{\ell=1}^\infty\frac{1}{\big[(k-k_0)\log\tau\cdot\ell\big]^{p\sigma_2}}
\int_{\tau^k_{i,\ell}}^{\tau^k_{i,\ell+1}}\frac{s^{n-1}}{s^{p(n+\sigma_1)}}ds.
\end{split}
\end{equation*}

Let us now consider the following two cases:

$(i)$ $p(n+\sigma_1)=n$ and $p\sigma_2>2>1$.
\begin{equation*}
\begin{split}
\mbox{\upshape IV}&\leq C\sum_{k=k_0+1}^\infty\sum_i
\big\|b^k_i\big\|^p_{L^\infty}\big(r^k_i\big)^{n}\cdot\frac{(k-k_0)\log\tau}{[(k-k_0)\log\tau]^{p\sigma_2}}
\cdot\sum_{\ell=1}^\infty\frac{1}{\ell^{p\sigma_2}}\\
&\leq C\sum_{k=k_0+1}^\infty 2^{kp}\cdot\frac{(k-k_0)}{(k-k_0)^{p\sigma_2}}\sum_i\big|Q^k_i\big|\\
&\leq C\big\|f\big\|^p_{WH^p}\sum_{k=k_0+1}^\infty\frac{1}{(k-k_0)^{p\sigma_2-1}}\\
&\leq C\big\|f\big\|^p_{WH^p},
\end{split}
\end{equation*}
where in the last inequality we have used the fact that $p\sigma_2-1>1$.

$(ii)$ $p(n+\sigma_1)>n$ and $\sigma_2\geq0$. In this case, we have
\begin{equation*}
\begin{split}
\mbox{\upshape IV}&\leq C\sum_{k=k_0+1}^\infty\sum_i
\big\|b^k_i\big\|^p_{L^\infty}\big(r^k_i\big)^{n}\cdot\frac{(k-k_0)\log\tau}{[(k-k_0)\log\tau]^{p\sigma_2}}\\
&\times\sum_{\ell=1}^\infty\frac{1}{\ell^{p\sigma_2}}\left[\frac{1}{\tau^{{\ell p(k-k_0)}/n}}\right]^{p(n+\sigma_1)-n}\\
&\leq C\sum_{k=k_0+1}^\infty\sum_i
\big\|b^k_i\big\|^p_{L^\infty}\big(r^k_i\big)^{n}\cdot\frac{1}{(k-k_0)^{p\sigma_2-1}}\\
&\times\sum_{\ell=1}^\infty\left[\frac{1}{\tau^{{\ell p(k-k_0)}/n}}\right]^{p(n+\sigma_1)-n}.
\end{split}
\end{equation*}
Letting $\varepsilon=\frac{p[p(n+\sigma_1)-n]}{n}>0$. Since $\tau>1$, then we can easily see that
\begin{equation*}
\lim_{\ell\rightarrow\infty}\frac{\ell^{2+\varepsilon}}{\tau^{\ell\varepsilon}}=0.
\end{equation*}
Thus, for any $\ell\in\mathbb N_+$, there exists an absolute constant $C>0$ such that
\begin{equation*}
\frac{\ell^{2+\varepsilon}}{\tau^{\ell\varepsilon}}\leq C,\quad \ell=1,2,\ldots.
\end{equation*}
Therefore,
\begin{equation*}
\begin{split}
\mbox{\upshape IV}&\leq C\sum_{k=k_0+1}^\infty\sum_i
\big\|b^k_i\big\|^p_{L^\infty}\big(r^k_i\big)^{n}\cdot\frac{1}{(k-k_0)^{p\sigma_2-1}}\sum_{\ell=1}^\infty\frac{1}{[(k-k_0)\ell]^{2+\varepsilon}}\\
&\leq C\sum_{k=k_0+1}^\infty 2^{kp}\cdot\frac{1}{(k-k_0)^{p\sigma_2+1+\varepsilon}}\sum_i\big|Q^k_i\big|\\
&\leq C\big\|f\big\|^p_{WH^p}\sum_{k=k_0+1}^\infty\frac{1}{(k-k_0)^{p\sigma_2+1+\varepsilon}}\\
&\leq C\big\|f\big\|^p_{WH^p},
\end{split}
\end{equation*}
where the last inequality follows from the fact that $p\sigma_2+1+\varepsilon>1$.
Combining the above estimates for $I_1$ and $I_2$, and then taking the supremum over all $\lambda>0$, we complete the proof of Theorem \ref{mainthm:4}.
\end{proof}

\subsection{Proof of Theorem \ref{mainthm:5}}

\begin{proof}[Proof of Theorem $\ref{mainthm:5}$]
For any fixed $\lambda>0$, we may choose $k_0\in\mathbb Z$ satisfying $2^{k_0}\le\xi<2^{k_0+1}$, where we define $\xi=\lambda^{q/p}\big\|f\big\|^{1-q/p}_{WH^p}$. For every $f\in WH^p(\mathbb R^n)$, then in view of Theorem \ref{thm:weak Hardy}, we can write
\begin{equation*}
f=\sum_{k=-\infty}^\infty f_k=\sum_{k=-\infty}^{k_0} f_k+\sum_{k=k_0+1}^\infty f_k:=F_1+F_2,
\end{equation*}
where $F_1=\sum_{k=-\infty}^{k_0} f_k=\sum_{k=-\infty}^{k_0}\sum_i b^k_i$, $F_2=\sum_{k=k_0+1}^\infty f_k=\sum_{k=k_0+1}^\infty\sum_i b^k_i$ and $\{b^k_i\}$ satisfies $(a)$--$(c)$ in Theorem \ref{thm:weak Hardy}. Then we have
\begin{equation*}
\begin{split}
&\lambda\cdot \big|\big\{x\in\mathbb R^n:\big|T_{\Omega,\alpha}(f)(x)\big|>\lambda\big\}\big|^{1/q}\\
\leq\ &\lambda\cdot \big|\big\{x\in\mathbb R^n:\big|T_{\Omega,\alpha}(F_1)(x)\big|>\lambda/2\big\}\big|^{1/q}
+\lambda\cdot \big|\big\{x\in\mathbb R^n:\big|T_{\Omega,\alpha}(F_2)(x)\big|>\lambda/2\big\}\big|^{1/q}\\
:=\ &J_1+J_2.
\end{split}
\end{equation*}
For $0<\alpha<n$, we are able to choose $p_1$ and $q_1>p_1$ such that $1<p_1<n/{\alpha}$ and $1/{q_1}=1/{p_1}-\alpha/n$. Similar to the proof of Theorem \ref{mainthm:4}, we first claim that the following inequality holds:
\begin{equation}\label{ineq-3}
\big\|F_1\big\|_{L^{p_1}}\le C\cdot\xi^{1-p/{p_1}}\big\|f\big\|^{p/{p_1}}_{WH^p}.
\end{equation}
Indeed, since supp\,$b^k_i\subseteq Q^k_i=Q\big(x^k_i,r^k_i\big)$ and $\big\|b^k_i\big\|_{L^\infty}\le C 2^k$ by Theorem \ref{thm:weak Hardy}, then by using Minkowski's inequality for integrals, we get
\begin{equation*}
\big\|F_1\big\|_{L^{p_1}}\leq\sum_{k=-\infty}^{k_0}\Big\|\sum_i b^k_i\Big\|_{L^{p_1}}.
\end{equation*}
For each $k\in\mathbb Z$, by using the finitely overlapping property of the cubes $\{Q^k_i\}$ and the fact that $1-p/{p_1}>0$, we thus obtain
\begin{equation*}
\begin{split}
\Big\|\sum_i b^k_i\Big\|_{L^{p_1}}&\leq\sup_i\big\|b^k_i\big\|_{L^\infty}\left(\int_{x\in\cup_i Q^k_i}1\,dx\right)^{1/{p_1}}\\
&\leq C\cdot2^k\bigg(\sum_i \big|Q^k_i\big|\bigg)^{1/{p_1}}\\
&\leq C\cdot2^{k(1-p/{p_1})}\big\|f\big\|^{p/{p_1}}_{WH^p},
\end{split}
\end{equation*}
and
\begin{equation*}
\begin{split}
\big\|F_1\big\|_{L^{p_1}}&\leq C\sum_{k=-\infty}^{k_0}2^{(k-k_0)(1-p/{p_1})}\cdot\xi^{1-p/{p_1}}\big\|f\big\|^{p/{p_1}}_{WH^p}\\
&=C\cdot\xi^{1-p/{p_1}}\big\|f\big\|^{p/{p_1}}_{WH^p}\sum_{k=0}^\infty\left(\frac{\,1\,}{2}\right)^{k(1-p/{p_1})}\\
&\leq C\cdot\xi^{1-p/{p_1}}\big\|f\big\|^{p/{p_1}}_{WH^p}.
\end{split}
\end{equation*}
By our assumption, we know that $T_{\Omega,\alpha}$ is bounded from $L^{p_1}(\mathbb R^n)$ to $L^{q_1}(\mathbb R^n)$ according to Theorem B. This fact along with Chebyshev's inequality and the inequality (\ref{ineq-3}) implies that
\begin{equation*}
\begin{split}
J_1&\le\lambda\cdot\Big(\frac{2}{\lambda}\Big)^{{q_1}/q}
\Big(\big\|T_{\Omega,\alpha}(F_1)\big\|_{L^{q_1}}\Big)^{{q_1}/q}\\
&\le C\cdot\lambda^{1-{q_1}/q}\Big(\big\|F_1\big\|_{L^{p_1}}\Big)^{{q_1}/q}\\
&\le C\cdot\lambda^{1-{q_1}/q}\xi^{(1-p/{p_1})\cdot{q_1}/q}\big\|f\big\|^{{(pq_1)}/{(p_1 q)}}_{WH^p}\\
&\le C\cdot\lambda^{1-{q_1}/q}\Big(\lambda^{q/p}\big\|f\big\|^{1-q/p}_{WH^p}\Big)^{(1-p/{p_1})\cdot{q_1}/q}
\big\|f\big\|^{{(pq_1)}/{(p_1 q)}}_{WH^p}.
\end{split}
\end{equation*}
Notice that $1/p-1/q=1/{p_1}-1/{q_1}=\alpha/n$, then it is easy to check that
\begin{equation*}
\begin{split}
&1-{q_1}/q+q/p\cdot(1-p/{p_1})\cdot{q_1}/q\\
=&1-{q_1}/q+q_1\cdot(1/p-1/{p_1})\\
=&1-{q_1}/q+q_1\cdot(1/q-1/{q_1})\\
=&0
\end{split}
\end{equation*}
and
\begin{equation*}
\begin{split}
&(1-q/p)(1-p/{p_1})\cdot{q_1}/q+{(pq_1)}/{(p_1 q)}\\
=&(1-q/p)(1-p/{p_1})\cdot{q_1}/q-(1-p/{p_1})\cdot{q_1}/q+{q_1}/q\\
=&q/p\cdot(p/{p_1}-1)\cdot{q_1}/q+{q_1}/q\\
=&q_1(1/{q_1}-1/q)+{q_1}/q\\
=&1.
\end{split}
\end{equation*}
Hence
\begin{equation*}
J_1\le C\big\|f\big\|_{WH^p}.
\end{equation*}
We now turn our attention to the estimate of $J_2$. Setting
\begin{equation*}
A_{k_0}=\bigcup_{k=k_0+1}^\infty\bigcup_i \widetilde{Q^k_i},
\end{equation*}
where $\widetilde{Q^k_i}=Q\big(x^k_i,\tau^{{p(k-k_0)}/n}(2\sqrt n)r^k_i\big)$ and $\tau$ is also a fixed positive number such that $1<\tau<2$. Thus, we can further split $J_2$ into two parts,
\begin{equation*}
\begin{split}
J_2&\le\lambda\cdot \big|\big\{x\in A_{k_0}:|T_{\Omega,\alpha}(F_2)(x)|>\lambda/2\big\}\big|^{1/q}\\
&+
\lambda\cdot \big|\big\{x\in (A_{k_0})^c:|T_{\Omega,\alpha}(F_2)(x)|>\lambda/2\big\}\big|^{1/q}\\
&=J'_2+J''_2.
\end{split}
\end{equation*}
For the term $J'_2$, we can see that
\begin{equation*}
\begin{split}
J'_2&\leq\lambda\bigg(\sum_{k=k_0+1}^\infty\sum_i \big|\widetilde{Q^k_i}\big|\bigg)^{1/q}\\
&\leq C\cdot\lambda\bigg(\sum_{k=k_0+1}^\infty\tau^{p(k-k_0)}\sum_i \big|Q^k_i\big|\bigg)^{1/q}\\
&\leq C\cdot\lambda\cdot\xi^{-p/q}
\big\|f\big\|_{WH^p}^{p/q}\bigg(\sum_{k=k_0+1}^\infty\Big[\frac{\tau}{\,2\,}\Big]^{{p(k-k_0)}}\bigg)^{1/q}\\
&\leq C\cdot\lambda\cdot\Big(\lambda^{q/p}\big\|f\big\|^{1-q/p}_{WH^p}\Big)^{-p/q}\big\|f\big\|_{WH^p}^{p/q}\\
&=  C\big\|f\big\|_{WH^p}.
\end{split}
\end{equation*}
For the term $J''_2$, note that $n/{(n-\alpha+1)}<q\leq n/{(n-\alpha)}$ when $n/{(n+1)}<p\leq1$ and $1/q=1/p-\alpha/n$. For the case of $q>1$, by using Chebyshev's inequality and Minkowski's inequality for integrals, we have
\begin{equation*}
\begin{split}
J''_2&\le 2\left(\int_{(A_{k_0})^c}\big|T_{\Omega,\alpha}(F_2)(x)\big|^q\,dx\right)^{1/q}\\
&\le 2\sum_{k=k_0+1}^\infty\sum_i
\left(\int_{\big(\widetilde{Q^k_i}\big)^c}\big|T_{\Omega,\alpha}\big(b^k_i\big)(x)\big|^q\,dx\right)^{1/q}.
\end{split}
\end{equation*}
On the other hand, for the case of $q\leq1$, then by using Chebyshev's inequality and the well-known inequality $(\sum_i\mu_i)^{q}\leq\sum_i(\mu_i)^{q}$, we conclude that
\begin{equation*}
\begin{split}
J''_2&\le 2\left(\int_{(A_{k_0})^c}\big|T_{\Omega,\alpha}(F_2)(x)\big|^q\,dx\right)^{1/q}\\
&\le 2\left(\sum_{k=k_0+1}^\infty\sum_i\int_{\big(\widetilde{Q^k_i}\big)^c}\big|T_{\Omega,\alpha}\big(b^k_i\big)(x)\big|^q\,dx\right)^{1/q}.
\end{split}
\end{equation*}
Furthermore, by the cancellation condition of $b^k_i\in L^\infty(\mathbb R^n)$, we get
\begin{equation*}
\begin{split}
\big|T_{\Omega,\alpha}\big(b^k_i\big)(x)\big|&=
\left|\int_{Q^k_i}\bigg[\frac{\Omega(x,x-y)}{|x-y|^{n-\alpha}}-\frac{\Omega(x,x-x^k_i)}{|x-x^k_i|^{n-\alpha}}\bigg]
b^k_i(y)\,dy\right|\\
&\le C\int_{Q^k_i}\left|\frac{1}{|x-y|^{n-\alpha}}-\frac{1}{|x-x^k_i|^{n-\alpha}}\right|
\big|b^k_i(y)\big|\,dy\\
&\ +\int_{Q^k_i}\frac{|\Omega(x,x-y)-\Omega(x,x-x^k_i)|}{|x-x^k_i|^{n-\alpha}}\big|b^k_i(y)\big|\,dy\\
&= \mbox{\upshape I+II}.
\end{split}
\end{equation*}
Let us consider the term I. Noting that if $y\in Q^k_i$ and $x\in\big(\widetilde{Q^k_i}\big)^c$, then we have $|x-y|\sim |x-x^k_i|$. Using the mean value theorem again, we obtain
\begin{equation*}
\begin{split}
\mbox{\upshape I}&\leq C\int_{Q^k_i}\frac{|y-x^k_i|}{|x-x^k_i|^{n-\alpha+1}}\big|b^k_i(y)\big|\,dy\\
&\leq C\cdot\big\|b^k_i\big\|_{L^\infty}\cdot\frac{(r^k_i)^{n+1}}{|x-x^k_i|^{n-\alpha+1}}.
\end{split}
\end{equation*}
For the other term II, we still have $|x-y|\sim |x-x^k_i|$ for all $i$ and $k$, when $y\in Q^k_i$ and $x\in\big(\widetilde{Q^k_i}\big)^c$. Hence, it follows from the previous inequalities (\ref{ineq-2}) and (\ref{Omega-2}) that
\begin{equation*}
\mbox{\upshape II}\le C\cdot\big\|b^k_i\big\|_{L^\infty}\cdot\frac{(r^k_i)^{n+\sigma_1}}{|x-x^k_i|^{n-\alpha+\sigma_1}
\big(\log\frac{|x-x^k_i|}{r^k_i}\big)^{\sigma_2}}.
\end{equation*}
We denote $\tau^k_{i,\ell}=\big(\tau^{{p(k-k_0)}/n}\sqrt{n}\big)^{\ell}r^k_i$ and
$$E^k_{i,\ell}=\big\{x\in\mathbb R^n:\tau^k_{i,\ell}\le|x-x^k_i|<\tau^k_{i,\ell+1}\big\},\quad \ell=1,2,\ldots.$$
Consequently, for the case of $q>1$, we have
\begin{equation*}
\begin{split}
J''_2&\le C\sum_{k=k_0+1}^\infty\sum_i\big\|b^k_i\big\|_{L^\infty}\big(r^k_i\big)^{n+1}
\left(\int_{\big(\widetilde{Q^k_i}\big)^c}\frac{dx}{|x-x^k_i|^{q(n-\alpha+1)}}\right)^{1/q}\\
&\ +C\sum_{k=k_0+1}^\infty\sum_i\big\|b^k_i\big\|_{L^\infty}\big(r^k_i\big)^{n+\sigma_1}
\left(\sum_{\ell=1}^\infty\int_{E^k_{i,\ell}}\frac{dx}{|x-x^k_i|^{q(n-\alpha+\sigma_1)}
\big(\log\frac{|x-x^k_i|}{r^k_i}\big)^{q\sigma_2}}\right)^{1/q}\\
&=\mbox{\upshape III}^{(1)}+\mbox{\upshape IV}^{(1)}.
\end{split}
\end{equation*}
Notice that $q(n-\alpha+1)>n$ and $1/q=1/p-\alpha/n$. We then use the polar coordinates for integrals to obtain
\begin{equation*}
\begin{split}
\mbox{\upshape III}^{(1)}&\leq C\sum_{k=k_0+1}^\infty\sum_i\big\|b^k_i\big\|_{L^\infty}\big(r^k_i\big)^{n+1}
\left(\int_{|y|\geq \tau^{{p(k-k_0)}/n}\sqrt{n}r^k_i}\frac{dy}{|y|^{q(n-\alpha+1)}}\right)^{1/q}\\
&\leq C\sum_{k=k_0+1}^\infty\sum_i\big\|b^k_i\big\|_{L^\infty}\big(r^k_i\big)^{n+1}
\left(\int_{\tau^{{p(k-k_0)}/n}\sqrt{n}r^k_i}^{\infty}\frac{s^{n-1}}{s^{q(n-\alpha+1)}}ds\right)^{1/q}\\
&\leq C\sum_{k=k_0+1}^\infty\sum_i\big\|b^k_i\big\|_{L^\infty}\big(r^k_i\big)^{n+1}
\cdot\left[\frac{1}{\tau^{{p(k-k_0)}/n}\cdot r^k_i}\right]^{n-\alpha+1-n/q}\\
&\leq C\sum_{k=k_0+1}^\infty 2^k\cdot\left[\frac{1}{\tau^{{p(k-k_0)}/n}}\right]^{n-\alpha+1-n/q}\sum_i\big|Q^k_i\big|^{1/p}.
\end{split}
\end{equation*}
Since $1/p\geq1$, by using the well-known inequality $\sum_i(\mu_i)^{1/p}\le(\sum_i\mu_i)^{1/p}$ and $\sum_i\big|Q^k_i\big|\le C\cdot2^{-kp}\big\|f\big\|^p_{WH^p}$, we have
\begin{equation*}
\begin{split}
\mbox{\upshape III}^{(1)}&\leq C\sum_{k=k_0+1}^\infty 2^k\cdot\left[\frac{1}{\tau^{{p(k-k_0)}/n}}\right]^{n-\alpha+1-n/q}
\left(\sum_i\big|Q^k_i\big|\right)^{1/p}\\
&\leq C\big\|f\big\|_{WH^p}\sum_{k=k_0+1}^\infty\left[\frac{1}{\tau^{{p(k-k_0)}/n}}\right]^{n-\alpha+1-n/q}\\
&\leq C\big\|f\big\|_{WH^p}.
\end{split}
\end{equation*}
In order to estimate the last term $\mbox{\upshape IV}^{(1)}$, we will use the polar coordinates for integrals again to obtain
\begin{equation*}
\begin{split}
\mbox{\upshape IV}^{(1)}&\leq C\sum_{k=k_0+1}^\infty\sum_i\big\|b^k_i\big\|_{L^\infty}\big(r^k_i\big)^{n+\sigma_1}
\left(\sum_{\ell=1}^\infty\int_{\tau^k_{i,\ell}\le|y|<\tau^k_{i,\ell+1}}
\frac{dy}{|y|^{q(n-\alpha+\sigma_1)}\big(\log\frac{|y|}{r^k_i}\big)^{q\sigma_2}}\right)^{1/q}\\
&\leq C\sum_{k=k_0+1}^\infty\sum_i\big\|b^k_i\big\|_{L^\infty}\big(r^k_i\big)^{n+\sigma_1}
\left(\sum_{\ell=1}^\infty\frac{1}{\big[(k-k_0)\log\tau\cdot\ell\big]^{q\sigma_2}}
\int_{\tau^k_{i,\ell}}^{\tau^k_{i,\ell+1}}\frac{s^{n-1}}{s^{q(n-\alpha+\sigma_1)}}ds\right)^{1/q}.
\end{split}
\end{equation*}

We are going to consider the following two cases:

$(i)$ $q(n-\alpha+\sigma_1)=n$ and $\sigma_2>1/q+1$. Then we know that $n+\sigma_1=n/p$. Hence, by using the well-known inequality $\sum_i(\mu_i)^{1/p}\le(\sum_i\mu_i)^{1/p}$ and $\sum_i\big|Q^k_i\big|\le C\cdot2^{-kp}\big\|f\big\|^p_{WH^p}$ again, we have
\begin{equation*}
\begin{split}
\mbox{\upshape IV}^{(1)}&\leq C\sum_{k=k_0+1}^\infty\sum_i
\big\|b^k_i\big\|_{L^\infty}\big(r^k_i\big)^{n/p}\cdot\frac{[(k-k_0)\log\tau]^{1/q}}{[(k-k_0)\log\tau]^{\sigma_2}}
\left(\sum_{\ell=1}^\infty\frac{1}{\ell^{q\sigma_2}}\right)^{1/q}\\
&\leq C\sum_{k=k_0+1}^\infty 2^k\cdot\frac{(k-k_0)^{1/q}}{(k-k_0)^{\sigma_2}}
\sum_i\big|Q^k_i\big|^{1/p}\\
\end{split}
\end{equation*}
\begin{equation*}
\begin{split}
&\leq C\sum_{k=k_0+1}^\infty 2^k\cdot\frac{(k-k_0)^{1/q}}{(k-k_0)^{\sigma_2}}
\left(\sum_i\big|Q^k_i\big|\right)^{1/p}\\
&\leq C\big\|f\big\|_{WH^p}\sum_{k=k_0+1}^\infty\frac{1}{(k-k_0)^{\sigma_2-1/q}}\\
&\leq C\big\|f\big\|_{WH^p}.
\end{split}
\end{equation*}

$(ii)$ $q(n-\alpha+\sigma_1)>n$ and $\sigma_2\geq 0$. In this case, we have
\begin{equation*}
\begin{split}
\mbox{\upshape IV}^{(1)}&\leq C\sum_{k=k_0+1}^\infty\sum_i
\big\|b^k_i\big\|_{L^\infty}\big(r^k_i\big)^{n+\sigma_1}\cdot\frac{[(k-k_0)\log\tau]^{1/q}}{[(k-k_0)\log\tau]^{\sigma_2}}\\
&\times\left(\sum_{\ell=1}^\infty\frac{1}{\ell^{q\sigma_2}}\left[\frac{1}{\tau^{{\ell p(k-k_0)}/n}\cdot r^k_i}\right]^{q(n-\alpha+\sigma_1)-n}\right)^{1/q}\\
&\leq C\sum_{k=k_0+1}^\infty\sum_i
\big\|b^k_i\big\|_{L^\infty}\big(r^k_i\big)^{\alpha+n/q}\cdot\frac{1}{(k-k_0)^{\sigma_2-1/q}}\\
&\times\left(\sum_{\ell=1}^\infty\left[\frac{1}{\tau^{{\ell p(k-k_0)}/n}}\right]^{q(n-\alpha+\sigma_1)-n}\right)^{1/q}.
\end{split}
\end{equation*}
Letting $\varepsilon'=\frac{p[q(n-\alpha+\sigma_1)-n]}{n}>0$. Since $\tau>1$, it is easy to verify that
\begin{equation*}
\lim_{\ell\rightarrow\infty}\frac{\ell^{q+1+\varepsilon'}}{\tau^{\ell\varepsilon'}}=0.
\end{equation*}
Thus, for any $\ell\in\mathbb N_+$, there exists an absolute constant $C>0$ such that
\begin{equation*}
\frac{\ell^{q+1+\varepsilon'}}{\tau^{\ell\varepsilon'}}\leq C,\quad \ell=1,2,\ldots.
\end{equation*}
Therefore,
\begin{equation*}
\begin{split}
\mbox{\upshape IV}^{(1)}&\leq C\sum_{k=k_0+1}^\infty\sum_i
\big\|b^k_i\big\|_{L^\infty}\big(r^k_i\big)^{n/p}\cdot\frac{1}{(k-k_0)^{\sigma_2-1/q}}\\
&\times\left(\sum_{\ell=1}^\infty\frac{1}{[(k-k_0)\ell]^{q+1+\varepsilon'}}\right)^{1/q}\\
&\leq C\sum_{k=k_0+1}^\infty 2^{k}\cdot\frac{1}{(k-k_0)^{\sigma_2+1+{\varepsilon'}/q}}\sum_i\big|Q^k_i\big|^{1/p}\\
&\leq C\big\|f\big\|_{WH^p}\sum_{k=k_0+1}^\infty\frac{1}{(k-k_0)^{\sigma_2+1+{\varepsilon'}/q}}\\
&\leq C\big\|f\big\|_{WH^p},
\end{split}
\end{equation*}
where the last inequality follows from our assumption that $\sigma_2+1+{\varepsilon'}/q>1$. On the other hand, for the case of $q\leq1$, we have
\begin{equation*}
\begin{split}
J''_2&\le C\left(\sum_{k=k_0+1}^\infty\sum_i\big\|b^k_i\big\|^q_{L^\infty}\big(r^k_i\big)^{q(n+1)}
\int_{\big(\widetilde{Q^k_i}\big)^c}\frac{dx}{|x-x^k_i|^{q(n-\alpha+1)}}\right)^{1/q}\\
&\ +C\left(\sum_{k=k_0+1}^\infty\sum_i\big\|b^k_i\big\|^q_{L^\infty}\big(r^k_i\big)^{q(n+\sigma_1)}
\sum_{\ell=1}^\infty\int_{E^k_{i,\ell}}\frac{dx}{|x-x^k_i|^{q(n-\alpha+\sigma_1)}
\big(\log\frac{|x-x^k_i|}{r^k_i}\big)^{q\sigma_2}}\right)^{1/q}\\
&=\mbox{\upshape III}^{(2)}+\mbox{\upshape IV}^{(2)}.
\end{split}
\end{equation*}
For the term $\mbox{\upshape III}^{(2)}$, note that $q(n-\alpha+1)>n$ and $n/q=n/p-\alpha$. Making use of the polar coordinates for integrals, we find that
\begin{equation*}
\begin{split}
\mbox{\upshape III}^{(2)}&\leq C\left(\sum_{k=k_0+1}^\infty\sum_i\big\|b^k_i\big\|^q_{L^\infty}\big(r^k_i\big)^{q(n+1)}
\int_{|y|\geq \tau^{{p(k-k_0)}/n}\sqrt{n}r^k_i}\frac{dy}{|y|^{q(n-\alpha+1)}}\right)^{1/q}\\
&\leq C\left(\sum_{k=k_0+1}^\infty\sum_i\big\|b^k_i\big\|^q_{L^\infty}\big(r^k_i\big)^{q(n+1)}
\int_{\tau^{{p(k-k_0)}/n}\sqrt{n}r^k_i}^{\infty}\frac{s^{n-1}}{s^{q(n-\alpha+1)}}ds\right)^{1/q}\\
&\leq C\left(\sum_{k=k_0+1}^\infty\sum_i\big\|b^k_i\big\|^q_{L^\infty}\big(r^k_i\big)^{q(n+1)}
\cdot\left[\frac{1}{\tau^{{p(k-k_0)}/n}\cdot r^k_i}\right]^{q(n-\alpha+1)-n}\right)^{1/q}\\
&\leq C\left(\sum_{k=k_0+1}^\infty 2^{kq}\cdot\left[\frac{1}{\tau^{{p(k-k_0)}/n}}\right]^{q(n-\alpha+1)-n}
\sum_i\big|Q^k_i\big|^{q/p}\right)^{1/q}.
\end{split}
\end{equation*}
Since $q/p>1$, by using the well-known inequality $\sum_i(\mu_i)^{q/p}\le(\sum_i\mu_i)^{q/p}$ and $\sum_i\big|Q^k_i\big|\le C\cdot2^{-kp}\big\|f\big\|^p_{WH^p}$, we have
\begin{equation*}
\begin{split}
\mbox{\upshape III}^{(2)}&\leq C\left(\sum_{k=k_0+1}^\infty 2^{kq}\cdot\left[\frac{1}{\tau^{{p(k-k_0)}/n}}\right]^{q(n-\alpha+1)-n}
\left[\sum_i\big|Q^k_i\big|\right]^{q/p}\right)^{1/q}\\
&\leq C\left(\sum_{k=k_0+1}^\infty\big\|f\big\|^q_{WH^p}\cdot\left[\frac{1}{\tau^{{p(k-k_0)}/n}}\right]^{q(n-\alpha+1)-n}\right)^{1/q}\\
&\leq C\big\|f\big\|_{WH^p}.
\end{split}
\end{equation*}
For the last term $\mbox{\upshape IV}^{(2)}$, we will use the polar coordinates for integrals again to obtain
\begin{equation*}
\begin{split}
\mbox{\upshape IV}^{(2)}&\leq C\left(\sum_{k=k_0+1}^\infty\sum_i\big\|b^k_i\big\|^q_{L^\infty}\big(r^k_i\big)^{q(n+\sigma_1)}
\sum_{\ell=1}^\infty\int_{\tau^k_{i,\ell}\le|y|<\tau^k_{i,\ell+1}}
\frac{dy}{|y|^{q(n-\alpha+\sigma_1)}\big(\log\frac{|y|}{r^k_i}\big)^{q\sigma_2}}\right)^{1/q}\\
&\leq C\left(\sum_{k=k_0+1}^\infty\sum_i\big\|b^k_i\big\|^q_{L^\infty}\big(r^k_i\big)^{q(n+\sigma_1)}
\sum_{\ell=1}^\infty\frac{1}{\big[(k-k_0)\log\tau\cdot\ell\big]^{q\sigma_2}}
\int_{\tau^k_{i,\ell}}^{\tau^k_{i,\ell+1}}\frac{s^{n-1}}{s^{q(n-\alpha+\sigma_1)}}ds\right)^{1/q}.
\end{split}
\end{equation*}

We are going to discuss the following two cases:

$(i)$ $q(n-\alpha+\sigma_1)=n$ and $q\sigma_2>2>1$. Then we know that $n+\sigma_1=n/p$. Thus, by using the well-known inequality $\sum_i(\mu_i)^{q/p}\le(\sum_i\mu_i)^{q/p}$ and $\sum_i\big|Q^k_i\big|\le C\cdot2^{-kp}\big\|f\big\|^p_{WH^p}$ again, we get
\begin{equation*}
\begin{split}
\mbox{\upshape IV}^{(2)}&\leq C\left(\sum_{k=k_0+1}^\infty\sum_i
\big\|b^k_i\big\|^q_{L^\infty}\big(r^k_i\big)^{{nq}/p}\cdot\frac{[(k-k_0)\log\tau]}{[(k-k_0)\log\tau]^{q\sigma_2}}
\sum_{\ell=1}^\infty\frac{1}{\ell^{q\sigma_2}}\right)^{1/q}\\
&\leq C\left(\sum_{k=k_0+1}^\infty 2^{kq}\cdot\frac{(k-k_0)}{(k-k_0)^{q\sigma_2}}
\sum_i\big|Q^k_i\big|^{q/p}\right)^{1/q}\\
&\leq C\left(\sum_{k=k_0+1}^\infty 2^{kq}\cdot\frac{(k-k_0)}{(k-k_0)^{q\sigma_2}}
\left[\sum_i\big|Q^k_i\big|\right]^{q/p}\right)^{1/q}\\
&\leq C\big\|f\big\|_{WH^p}\left(\sum_{k=k_0+1}^\infty\frac{1}{(k-k_0)^{q\sigma_2-1}}\right)^{1/q}\\
&\leq C\big\|f\big\|_{WH^p}.
\end{split}
\end{equation*}

$(ii)$ $q(n-\alpha+\sigma_1)>n$ and $\sigma_2\geq 0$. In the present situation, we have
\begin{equation*}
\begin{split}
\mbox{\upshape IV}^{(2)}&\leq C\Bigg(\sum_{k=k_0+1}^\infty\sum_i
\big\|b^k_i\big\|^q_{L^\infty}\big(r^k_i\big)^{q(n+\sigma_1)}\cdot\frac{[(k-k_0)\log\tau]}{[(k-k_0)\log\tau]^{q\sigma_2}}\\
&\times\sum_{\ell=1}^\infty\frac{1}{\ell^{q\sigma_2}}\left[\frac{1}{\tau^{{\ell p(k-k_0)}/n}\cdot r^k_i}\right]^{q(n-\alpha+\sigma_1)-n}\Bigg)^{1/q}\\
&\leq C\Bigg(\sum_{k=k_0+1}^\infty\sum_i
\big\|b^k_i\big\|^q_{L^\infty}\big(r^k_i\big)^{q\alpha+n}\cdot\frac{1}{(k-k_0)^{q\sigma_2-1}}\\
&\times\sum_{\ell=1}^\infty\left[\frac{1}{\tau^{{\ell p(k-k_0)}/n}}\right]^{q(n-\alpha+\sigma_1)-n}\Bigg)^{1/q}.
\end{split}
\end{equation*}
Letting $\varepsilon''=\frac{p[q(n-\alpha+\sigma_1)-n]}{n}>0$. Since $\tau>1$, it is easy to see that
\begin{equation*}
\lim_{\ell\rightarrow\infty}\frac{\ell^{2+\varepsilon''}}{\tau^{\ell\varepsilon''}}=0.
\end{equation*}
Thus, for any $\ell\in\mathbb N_+$, there exists an absolute constant $C>0$ such that
\begin{equation*}
\frac{\ell^{2+\varepsilon''}}{\tau^{\ell\varepsilon''}}\leq C,\quad \ell=1,2,\ldots.
\end{equation*}
Taking into account the facts that $q\alpha+n={nq}/p$ and $q/p>1$, we have eventually obtain
\begin{equation*}
\begin{split}
\mbox{\upshape IV}^{(2)}&\leq C\Bigg(\sum_{k=k_0+1}^\infty\sum_i
\big\|b^k_i\big\|^q_{L^\infty}\big(r^k_i\big)^{{nq}/p}\cdot\frac{1}{(k-k_0)^{q\sigma_2-1}}\\
&\times\sum_{\ell=1}^\infty\frac{1}{[(k-k_0)\ell]^{2+\varepsilon''}}\Bigg)^{1/q}\\
&\leq C\left(\sum_{k=k_0+1}^\infty 2^{kq}\cdot\frac{1}{(k-k_0)^{q\sigma_2+1+\varepsilon''}}\sum_i\big|Q^k_i\big|^{q/p}\right)^{1/q}\\
&\leq C\left(\sum_{k=k_0+1}^\infty 2^{kq}\cdot\frac{1}{(k-k_0)^{q\sigma_2+1+\varepsilon''}}
\left[\sum_i\big|Q^k_i\big|\right]^{q/p}\right)^{1/q}\\
&\leq C\big\|f\big\|_{WH^p}\left(\sum_{k=k_0+1}^\infty\frac{1}{(k-k_0)^{q\sigma_2+1+\varepsilon''}}\right)^{1/q}\\
&\leq C\big\|f\big\|_{WH^p},
\end{split}
\end{equation*}
where the last inequality is due to $q\sigma_2+1+\varepsilon''>1$. Collecting all the above estimates and then taking the supremum over all $\lambda>0$, we finish the proof of Theorem \ref{mainthm:5}.
\end{proof}

\subsection{Proof of Theorem \ref{mainthm:6}}

\begin{proof}[Proof of Theorem $\ref{mainthm:6}$]
Arguing as in the proof of Theorem \ref{mainthm:4}, for any fixed $\lambda>0$, we can choose $k_0\in\mathbb Z$ satisfying $2^{k_0}\leq\lambda<2^{k_0+1}$. For every $f\in WH^p(\mathbb R^n)$, then in view of Theorem \ref{thm:weak Hardy}, we may write
\begin{equation*}
f=\sum_{k=-\infty}^\infty f_k=\sum_{k=-\infty}^{k_0} f_k+\sum_{k=k_0+1}^\infty f_k:=F_1+F_2,
\end{equation*}
where $F_1=\sum_{k=-\infty}^{k_0} f_k=\sum_{k=-\infty}^{k_0}\sum_i b^k_i$, $F_2=\sum_{k=k_0+1}^\infty f_k=\sum_{k=k_0+1}^\infty\sum_i b^k_i$ and $\{b^k_i\}$ satisfies $(a)$--$(c)$ in Theorem \ref{thm:weak Hardy}. Then we have
\begin{equation*}
\begin{split}
&\lambda^p\cdot \big|\big\{x\in\mathbb R^n:\big|\mu^{\rho}_{\Omega}(f)(x)\big|>\lambda\big\}\big|\\
\leq\ &\lambda^p\cdot \big|\big\{x\in\mathbb R^n:\big|\mu^{\rho}_{\Omega}(F_1)(x)\big|>\lambda/2\big\}\big|
+\lambda^p\cdot \big|\big\{x\in\mathbb R^n:\big|\mu^{\rho}_{\Omega}(F_2)(x)\big|>\lambda/2\big\}\big|\\
:=\ &K_1+K_2.
\end{split}
\end{equation*}
Applying Chebyshev's inequality, Theorem \ref{Mar:L2} and the inequality (\ref{F1:L2}), we get
\begin{equation*}
\begin{split}
K_1&\leq \lambda^p\cdot\frac{4}{\lambda^2}\big\|\mu^{\rho}_{\Omega}(F_1)\big\|^2_{L^2}\notag\\
&\leq C\cdot\lambda^{p-2}\big\|F_1\big\|^2_{L^2}\notag\\
&\leq C\big\|f\big\|^p_{WH^p}.
\end{split}
\end{equation*}
Let us now consider the other term $K_2$. As before, we set
\begin{equation*}
A_{k_0}=\bigcup_{k=k_0+1}^\infty\bigcup_i \widetilde{Q^k_i},
\end{equation*}
where $\widetilde{Q^k_i}=Q\big(x^k_i,\tau^{{p(k-k_0)}/n}(2\sqrt n)r^k_i\big)$ and $\tau$ is an appropriately chosen number such that $1<\tau<2$. Thus, we can further decompose $K_2$ as
\begin{equation*}
\begin{split}
K_2&\leq\lambda^p\cdot \big|\big\{x\in A_{k_0}:\big|\mu^{\rho}_{\Omega}(F_2)(x)\big|>\lambda/2\big\}\big|
+\lambda^p\cdot \big|\big\{x\in (A_{k_0})^c:\big|\mu^{\rho}_{\Omega}(F_2)(x)\big|>\lambda/2\big\}\big|\\
&=K'_2+K''_2.
\end{split}
\end{equation*}
By using the same procedure as in Theorem \ref{mainthm:4}, we can also obtain
\begin{equation*}
K'_2\le C\big\|f\big\|^p_{WH^p}.
\end{equation*}
It remains to estimate the last term $K''_2$. We first apply Chebyshev's inequality to obtain
\begin{equation*}
\begin{split}
K''_2&\leq 2^p\int_{(A_{k_0})^c}\big|\mu^{\rho}_{\Omega}(F_2)(x)\big|^p\,dx\\
&\leq 2^p\sum_{k=k_0+1}^\infty\sum_i
\int_{\big(\widetilde{Q^k_i}\big)^c}\big|\mu^{\rho}_{\Omega}\big(b^k_i\big)(x)\big|^p\,dx.
\end{split}
\end{equation*}
As before, for $1\leq\rho<n$, if we set
$$\mathcal K^{\rho}(x,z)=\frac{\Omega(x,z)}{|z|^{n-\rho}}\chi_{\{|z|\le1\}}(z) \quad \mbox{and} \quad \mathcal K^{\rho}_t(x,z)=\frac{1}{t^n}\cdot \mathcal K^{\rho}\Big(x,\frac{z}{t}\Big),$$
then
\begin{equation}
\mu^{\rho}_{\Omega}\big(b^k_i\big)(x)=\left(\int_0^\infty\bigg|\int_{\mathbb R^n}\mathcal K^{\rho}_t(x,x-y)b^k_i(y)\,dy\bigg|^2\frac{dt}{t}\right)^{1/2}.
\end{equation}
For any $x\in\big(\widetilde{Q^k_i}\big)^c$, supp\,$\mathcal K^{\rho}(x,\cdot)\subseteq B(0,1)$, the unit ball in $\mathbb R^n$. By the cancellation condition of $b^k_i\in L^\infty(\mathbb R^n)$, we can get
\begin{equation*}
\begin{split}
\bigg|\int_{\mathbb R^n}\mathcal K^{\rho}_t(x,x-y)b^k_i(y)\,dy\bigg|=\ &
\left|\int_{Q^k_i}\Big[\mathcal K^{\rho}_t(x,x-y)-\mathcal K^{\rho}_t(x,x-x^k_i)\Big]
b^k_i(y)\,dy\right|\\
\le\ &\frac{C}{t^{\rho}}\cdot\int_{Q^k_i}
\bigg|\frac{1}{|x-y|^{n-\rho}}-\frac{1}{|x-x^k_i|^{n-\rho}}\bigg|\big|b^k_i(y)\big|\,dy\\
&+\frac{1}{t^{\rho}}\cdot\int_{Q^k_i}\frac{|\Omega(x,x-y)-\Omega(x,x-x^k_i)|}{|x-x^k_i|^{n-\rho}}
\big|b^k_i(y)\big|\,dy\\
=\ &\mbox{\upshape I+II}.
\end{split}
\end{equation*}
If $y\in Q^k_i$ and $x\in\big(\widetilde{Q^k_i}\big)^c$, then we still have $|x-y|\sim|x-x^k_i|$ for all $i$ and $k$. Again, we apply the mean value theorem to obtain
\begin{equation*}
\begin{split}
\mbox{\upshape I}&\leq\frac{C}{t^{\rho}}\int_{Q^k_i}\frac{|y-x^k_i|}{|x-x^k_i|^{n-\rho+1}}\big|b^k_i(y)\big|\,dy\\
&\leq C\cdot\big\|b^k_i\big\|_{L^\infty}\cdot\frac{(r^k_i)^{n+1}}{t^{\rho}|x-x^k_i|^{n-\rho+1}}.
\end{split}
\end{equation*}
In addition, it follows from the previous inequalities (\ref{ineq-2}) and (\ref{Omega-2}) that
\begin{equation*}
\mbox{\upshape II}\le C\cdot\big\|b^k_i\big\|_{L^\infty}\cdot
\frac{(r^k_i)^{n+\sigma_1}}{t^{\rho}|x-x^k_i|^{n-\rho+\sigma_1}\big(\log\frac{|x-x^k_i|}{r^k_i}\big)^{\sigma_2}}.
\end{equation*}
Recall that for any fixed $x$, supp\,$\mathcal K^{\rho}(x,\cdot)\subseteq\{z\in\mathbb R^n:|z|\leq 1\}$. When $y\in Q^k_i$ and $x\in\big(\widetilde{Q^k_i}\big)^c$, then a direct calculation shows that
\begin{equation}
t\ge|x-y|\ge\big|x-x^k_i\big|-\big|y-x^k_i\big|\ge\frac{|x-x^k_i|}{2}.
\end{equation}
Summarizing the above two estimates for I and II, for any $x\in \big(\widetilde{Q^k_i}\big)^c$, we have
\begin{equation*}
\begin{split}
\big|\mu^{\rho}_\Omega\big(b^k_i\big)(x)\big|&\leq C\cdot\big\|b^k_i\big\|_{L^\infty}
\Bigg[\frac{(r^k_i)^{n+1}}{|x-x^k_i|^{n-\rho+1}}+
\frac{(r^k_i)^{n+\sigma_1}}{|x-x^k_i|^{n-\rho+\sigma_1}\big(\log\frac{|x-x^k_i|}{r^k_i}\big)^{\sigma_2}}\Bigg]\\
&\times\left(\int_{\frac{|x-x^k_i|}{2}}^\infty\frac{dt}{t^{2\rho+1}}\right)^{1/2}\\
&\leq C\cdot \big\|b^k_i\big\|_{L^\infty}
\Bigg[\frac{(r^k_i)^{n+1}}{|x-x^k_i|^{n+1}}+
\frac{(r^k_i)^{n+\sigma_1}}{|x-x^k_i|^{n+\sigma_1}\big(\log\frac{|x-x^k_i|}{r^k_i}\big)^{\sigma_2}}\Bigg].
\end{split}
\end{equation*}
Repeating the arguments used in the proof of Theorem \ref{mainthm:4}, we can also prove that
\begin{equation*}
K''_2\le C\big\|f\big\|^p_{WH^p}.
\end{equation*}
Summing up all the above estimates and then taking the supremum over all $\lambda>0$, we conclude the proof of Theorem \ref{mainthm:6}.
\end{proof}

At the end of this section, we remark that for any function $f$, a straightforward computation shows that the radial maximal function of $f$ is pointwise dominated by $M(f)$, where $M$ denotes the standard Hardy--Littlewood maximal operator. Hence, by the weak type (1,1) estimate of $M$, it is easy to see that the space $L^1(\mathbb R^n)$ is continuously embedded as a subspace of $WH^1(\mathbb R^n)$, and we have $\|f\|_{WH^1}\le C\|f\|_{L^1}$ for any $f\in L^1(\mathbb R^n)$. Therefore, as direct consequences of Theorems \ref{mainthm:4}--\ref{mainthm:6}, we immediately obtain the following result.

\newtheorem{corollary}[theorem]{Corollary}

\begin{corollary}\label{cor:1}
Let $n\geq 2$ and $\Omega(x,z)$ satisfy $(\ref{cancel})$ and the $L^{\sigma_1}$-$(\log L)^{\sigma_2}$ condition $(\ref{L-logL})$. Then $T_{\Omega}$ is bounded from $L^1(\mathbb R^n)$ into $WL^1(\mathbb R^n)$ provided that $\sigma_1$ and $\sigma_2$ satisfy either of the following\\
$(i)$ $\sigma_1=0$ and $\sigma_2>2$; \\
$(ii)$ $0<\sigma_1\leq1$ and $\sigma_2\geq0$.
\end{corollary}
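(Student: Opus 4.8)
The plan is to obtain Corollary~\ref{cor:1} as the endpoint case $p=1$ of Theorem~\ref{mainthm:4}, combined with the continuous embedding $L^1(\mathbb R^n)\hookrightarrow WH^1(\mathbb R^n)$. Note first that when $p=1$ one has $n(1/p-1)=0$ and $2/p=2$, so the two alternatives in Theorem~\ref{mainthm:4} become exactly $(i)$ $\sigma_1=0$, $\sigma_2>2$ and $(ii)$ $0<\sigma_1\le1$, $\sigma_2\ge0$ --- precisely the hypotheses of the corollary. Thus it suffices to prove the two inequalities
\[
\big\|T_\Omega f\big\|_{WL^1}\le C\big\|f\big\|_{WH^1}\qquad\text{and}\qquad \big\|f\big\|_{WH^1}\le C\big\|f\big\|_{L^1},
\]
the first being Theorem~\ref{mainthm:4} with $p=1$, and then to chain them.

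To establish the embedding I would first show the pointwise domination $Gf(x)\le C\,Mf(x)$, where $M$ denotes the Hardy--Littlewood maximal operator. Since $p=1$ gives $N=[n(1/p-1)]=0$, every $\varphi\in\mathscr A_N$ satisfies $(1+|x|)^{n+1}|\varphi(x)|\le1$, hence $|\varphi(x)|\le\psi(x):=(1+|x|)^{-(n+1)}$, and $\psi$ is a radial, radially decreasing, integrable majorant. The classical estimate for convolution against dilates of such a majorant then yields $\sup_{t>0}\sup_{|y-x|<t}\big|(\varphi_t*f)(y)\big|\le C\|\psi\|_{L^1}\,Mf(x)$, with $C$ independent of $\varphi\in\mathscr A_N$; taking the supremum over $\mathscr A_N$ gives $Gf\le C\,Mf$ pointwise. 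The weak-type $(1,1)$ bound for $M$ then furnishes
\[
\big\|f\big\|_{WH^1}=\big\|Gf\big\|_{WL^1}\le C\big\|Mf\big\|_{WL^1}\le C\big\|f\big\|_{L^1}
\]
for all $f\in L^1(\mathbb R^n)$, so in particular $L^1(\mathbb R^n)$ embeds continuously into $WH^1(\mathbb R^n)$.

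Given these ingredients the conclusion is immediate: for $f\in L^1(\mathbb R^n)$, $\|T_\Omega f\|_{WL^1}\le C\|f\|_{WH^1}\le C\|f\|_{L^1}$, which is the assertion of Corollary~\ref{cor:1}. I would also record that exactly the same argument applied to Theorems~\ref{mainthm:5} and~\ref{mainthm:6} at $p=1$ yields the companion weak-type statements for $T_{\Omega,\alpha}$ (from $L^1$ into $WL^q$ with $1/q=1-\alpha/n$) and for $\mu^\rho_\Omega$ (from $L^1$ into $WL^1$), under the parameter ranges obtained by specializing those theorems to $p=1$.

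There is no real obstruction in this argument; the only point requiring a little care is the pointwise estimate $Gf\le C\,Mf$, since $Gf$ is a supremum not only over $t>0$ but also over the non-tangential region $|y-x|<t$ and over the entire normalized family $\mathscr A_N$. This is the standard fact that the non-tangential maximal function associated to a fixed Schwartz bump is controlled by $Mf$ (using that, for $|y-x|<t$, $t+|y-z|\ge\tfrac12(t+|x-z|)$), made uniform over $\mathscr A_N$ through the common majorant $\psi$. Alternatively, one could avoid $WH^1$ entirely and prove $T_\Omega\colon L^1\to WL^1$ directly via a Calder\'on--Zygmund decomposition, invoking Theorem~A for the good part and the kernel smoothness estimate~(\ref{omega}) for the bad part; but passing through Theorem~\ref{mainthm:4} is more economical once that theorem is in hand.
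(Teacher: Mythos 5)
Your argument is exactly the paper's: the corollary is deduced as the $p=1$ case of Theorem~\ref{mainthm:4} combined with the continuous embedding $L^1(\mathbb R^n)\hookrightarrow WH^1(\mathbb R^n)$, which the paper likewise obtains from pointwise domination of the maximal function by $M(f)$ and the weak type $(1,1)$ bound for $M$. Your treatment of the grand maximal function $G$ via the common majorant $\psi$ is in fact slightly more careful than the paper's remark, which only mentions the radial maximal function, but the route is the same.
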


\begin{corollary}\label{cor:2}
Let $0<\alpha<n$, $1/q=1-\alpha/n$ and $\Omega(x,z)$ satisfy the $L^{\sigma_1}$-$(\log L)^{\sigma_2}$ condition $(\ref{L-logL})$. Then $T_{\Omega,\alpha}$ is bounded from $L^1(\mathbb R^n)$ into $WL^q(\mathbb R^n)$ provided that $\sigma_1$ and $\sigma_2$ satisfy either of the following\\
$(i)$ $\sigma_1=0$ and $\sigma_2>1/q+1$;\\
$(ii)$ $0<\sigma_1\leq1$ and $\sigma_2\geq0$.
\end{corollary}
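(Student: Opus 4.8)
The plan is to obtain this as a direct corollary of Theorem \ref{mainthm:5}, combined with the continuous embedding $L^1(\mathbb R^n)\hookrightarrow WH^1(\mathbb R^n)$ recorded in the remark just above. First I would specialize Theorem \ref{mainthm:5} to the endpoint exponent $p=1$: the relation $1/q=1/p-\alpha/n$ there becomes precisely $1/q=1-\alpha/n$, which is the hypothesis of the corollary, so $T_{\Omega,\alpha}$ is bounded from $WH^1(\mathbb R^n)$ into $WL^q(\mathbb R^n)$ whenever $\Omega$ satisfies the appropriate smoothness condition.

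The next step is to verify that the two alternative smoothness conditions in Theorem \ref{mainthm:5} collapse, at $p=1$, to conditions $(i)$ and $(ii)$ of Corollary \ref{cor:2}. Since $0<\alpha<n$, we have $1/q=1-\alpha/n\in(0,1)$, hence $q>1$ and $\max\{1,1/q\}=1$; moreover $n/q=n-\alpha$ gives the critical index
\begin{equation*}
n(1/q-1)+\alpha=(n-\alpha)-n+\alpha=0.
\end{equation*}
Thus alternative $(i)$ of Theorem \ref{mainthm:5} reads $\sigma_1=0$ and $\sigma_2>1/q+1$, while alternative $(ii)$ reads $0<\sigma_1\leq1$ and $\sigma_2\geq0$, exactly matching the hypotheses of the corollary.

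Finally, I would chain the two bounds: for every $f\in L^1(\mathbb R^n)$,
\begin{equation*}
\big\|T_{\Omega,\alpha}(f)\big\|_{WL^q}\leq C\big\|f\big\|_{WH^1}\leq C\big\|f\big\|_{L^1},
\end{equation*}
where the first inequality is Theorem \ref{mainthm:5} with $p=1$ and the second is the embedding $\|f\|_{WH^1}\le C\|f\|_{L^1}$. This establishes that $T_{\Omega,\alpha}$ maps $L^1(\mathbb R^n)$ boundedly into $WL^q(\mathbb R^n)$.

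There is no real analytic difficulty in this argument, since the statement is a formal consequence of a theorem already proved in the paper. The only step needing attention is the bookkeeping of exponents: one must check that $q>1$ so that the factor $\max\{1,1/q\}$ in Theorem \ref{mainthm:5}$(i)$ reduces to $1$, and that the critical smoothness index $n(1/q-1)+\alpha$ vanishes at $p=1$ under the constraint $1/q=1-\alpha/n$. Once these elementary identities are in place, the conclusion follows at once.
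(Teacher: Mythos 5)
Your proposal is correct and is exactly the argument the paper intends: the corollary is stated as a direct consequence of Theorem \ref{mainthm:5} with $p=1$ via the embedding $\|f\|_{WH^1}\le C\|f\|_{L^1}$ recorded in the preceding remark, and your exponent checks ($q>1$ so $\max\{1,1/q\}=1$, and $n(1/q-1)+\alpha=0$) correctly verify that the hypotheses specialize as claimed.
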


\begin{corollary}\label{cor:3}
Let $1\leq \rho<n$ and $\Omega(x,z)$ satisfy $(\ref{cancel})$ and the $L^{\sigma_1}$-$(\log L)^{\sigma_2}$ condition $(\ref{L-logL})$. Then $\mu^{\rho}_{\Omega}$ is bounded from $L^1(\mathbb R^n)$ into $WL^1(\mathbb R^n)$ provided that $\sigma_1$ and $\sigma_2$ satisfy either of the following\\
$(i)$ $\sigma_1=0$ and $\sigma_2>2$; \\
$(ii)$ $0<\sigma_1\leq1$ and $\sigma_2\geq0$.
\end{corollary}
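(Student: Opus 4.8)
The plan is to obtain Corollary~\ref{cor:3} as an immediate consequence of Theorem~\ref{mainthm:6} specialized to $p=1$, combined with the continuous inclusion $L^1(\mathbb R^n)\hookrightarrow WH^1(\mathbb R^n)$ alluded to in the remark above.

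The first step is to record the embedding $\|f\|_{WH^1}\le C\|f\|_{L^1}$ for all $f\in L^1(\mathbb R^n)$. When $p=1$ we have $N=[n(1/p-1)]=0$, so every $\varphi\in\mathscr A_0$ has dilates obeying the uniform bound $|\varphi_t(w)|\le C\,t^{-n}\bigl(1+|w|/t\bigr)^{-n-1}$, i.e.\ an $L^1$-normalized bump of scale $t$ centered at the origin. Consequently, for any $x$ and any $(y,t)$ with $|y-x|<t$,
\[
\big|(\varphi_t*f)(y)\big|\le \int_{\mathbb R^n}\big|\varphi_t(y-z)\big|\,|f(z)|\,dz\le C\,Mf(x),
\]
where $M$ is the Hardy--Littlewood maximal operator; taking the supremum over $\varphi\in\mathscr A_0$ and over all such $(y,t)$ gives the pointwise majorization $Gf(x)\le C\,Mf(x)$. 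The weak type $(1,1)$ bound for $M$ then yields $\|f\|_{WH^1}=\|Gf\|_{WL^1}\le C\,\|Mf\|_{WL^1}\le C\,\|f\|_{L^1}$.

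The second step is to invoke Theorem~\ref{mainthm:6} with $p=1$. The admissible range $n/(n+1)<p\le1$ contains $p=1$, and at $p=1$ one has $n(1/p-1)=0$ and $2/p=2$, so condition $(i)$ there ($\sigma_1=0$, $\sigma_2>2$) and condition $(ii)$ there ($0<\sigma_1\le1$, $\sigma_2\ge0$) coincide precisely with the two alternatives in the present corollary. Hence, under either hypothesis, $\mu^{\rho}_{\Omega}$ is bounded from $WH^1(\mathbb R^n)$ into $WL^1(\mathbb R^n)$. Combining this with the inclusion of the first step, for every $f\in L^1(\mathbb R^n)$ we get
\[
\big\|\mu^{\rho}_{\Omega}(f)\big\|_{WL^1}\le C\,\big\|f\big\|_{WH^1}\le C\,\big\|f\big\|_{L^1},
\]
which is the desired conclusion.

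There is essentially no obstacle in this argument; the only detail deserving attention is that $WH^1$ in Section~2 is defined through the grand maximal function $G$ rather than the radial one, so one must verify $Gf\le C\,Mf$, which is exactly the bump-function estimate carried out in the first step. After that, the result is simply a composition of two bounded maps, once the exponent conditions of Theorem~\ref{mainthm:6} at $p=1$ are matched against those in the corollary.
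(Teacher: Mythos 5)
Your argument is exactly the paper's: the remark preceding Corollaries 4.8--4.10 establishes the embedding $\|f\|_{WH^1}\le C\|f\|_{L^1}$ via domination of the maximal function by the Hardy--Littlewood operator $M$ and its weak type $(1,1)$ bound, and the corollary is then read off from Theorem \ref{mainthm:6} at $p=1$. Your additional care in checking that the grand maximal function $G$ (not just the radial one) is controlled by $Mf$ is a correct and worthwhile refinement of the paper's slightly loose remark, but the route is the same.
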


It is worth pointing out that the conclusions $(i)$ of Corollaries \ref{cor:1}--\ref{cor:3} were also given by the author in \cite{wang}.

\section{Boundedness on the Hardy--Lorentz spaces $H^{p,q}(\mathbb R^n)$}

For $0<p<\infty$ and $0<q\leq\infty$, the Lorentz space $L^{p,q}(\mathbb R^n)$ consists of those measurable functions $f$ with finite quasi-norm $\|f\|_{p,q}$ given by
\begin{equation*}
\|f\|_{L^{p,q}}=
\begin{cases}\displaystyle\left(\frac{\,q\,}{p}\int_0^\infty [t^{1/p}f^*(t)]^q\frac{dt}{t}\right)^{1/q}, \quad & 0<q<\infty,\\
\displaystyle\sup_{t>0}\,[t^{1/p}f^*(t)], & q=\infty.
\end{cases}
\end{equation*}
where $f^*$ is the nonincreasing rearrangement of $f$ on $(0,\infty)$. Note that, in particular, $L^{p,p}(\mathbb R^n)=L^{p}(\mathbb R^n)$ and $L^{p,\infty}(\mathbb R^n)=WL^{p}(\mathbb R^n)$. For any $0<p<\infty$ and $p<q<\infty$, from the theory of real interpolation, we have the following result (see \cite{bergh,grafakos})
\begin{equation}\label{Lorentz}
\big(L^{p,p},L^{p,\infty}\big)_{\theta,q}=L^{p,q},
\end{equation}
where $1/q={(1-\theta)}/p+\theta/{\infty}={(1-\theta)}/p$ and $0<\theta<1$.

Just as in the case of $H^p(\mathbb R^n)$, the Hardy--Lorentz spaces $H^{p,q}(\mathbb R^n)$ can also be defined in terms of radial maximal functions for all $0<p\leq1$ and $0<q\leq\infty$. Let $\varphi$ be a function in $\mathscr S(\mathbb R^n)$ satisfying $\int_{\mathbb R^n}\varphi(x)\,dx=1$ and define the radial maximal function $M_\varphi(f)=\sup_{t>0}\big|(\varphi_t*f)(x)\big|$. Then the Hardy--Lorentz space $H^{p,q}(\mathbb R^n)$ consists of those tempered distributions $f\in \mathscr S'(\mathbb R^n)$ for which
$M_\varphi(f)\in L^{p,q}(\mathbb R^n)$ with $\big\|f\big\|_{H^{p,q}}=\big\|M_\varphi(f)\big\|_{L^{p,q}}$. Moreover, for any $0<p\leq1$, we can see that $H^{p,p}(\mathbb R^n)=H^{p}(\mathbb R^n)$ and $H^{p,\infty}(\mathbb R^n)=WH^{p}(\mathbb R^n)$. For more information about the properties and applications of Hardy--Lorentz spaces, the reader is referred to \cite{cfefferman,abu}. For all $0<p\leq1$ and $p<q<\infty$, we need the following interpolation result for the Hardy--Lorentz spaces $H^{p,q}(\mathbb R^n)$ with the same first index $p$, which was shown by Abu-Shammala and Torchinsky in \cite{abu}.
\begin{equation}\label{Hardy-Lorentz}
\big(H^{p,p},H^{p,\infty}\big)_{\theta,q}=H^{p,q},
\end{equation}
where $1/q={(1-\theta)}/p+\theta/{\infty}={(1-\theta)}/p$ and $0<\theta<1$. Therefore, using the facts (\ref{Lorentz}) and (\ref{Hardy-Lorentz}) mentioned above, together with the main theorems stated in Section 1, we finally obtain

\begin{theorem}
Let $n\geq 2$, $n/{(n+1)}<p\leq1$, and $\Omega(x,z)$ satisfy $(\ref{cancel})$ and the $L^{\sigma_1}$-$(\log L)^{\sigma_2}$ condition $(\ref{L-logL})$. Then for any $p<q<\infty$, $T_{\Omega}$ is bounded from $H^{p,q}(\mathbb R^n)$ into $L^{p,q}(\mathbb R^n)$ provided that $\sigma_1$ and $\sigma_2$ satisfy either of the following\\
$(i)$ $\sigma_1=n(1/p-1)$ and $\sigma_2>2/p$; \\
$(ii)$ $n(1/p-1)<\sigma_1\leq1$ and $\sigma_2\geq0$.
\end{theorem}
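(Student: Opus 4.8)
The plan is to deduce this statement from Theorem~\ref{mainthm:1} and Theorem~\ref{mainthm:4} by real interpolation, exactly as announced in the sentence preceding it. Fix $p<q<\infty$ and choose $\theta\in(0,1)$ so that $1/q=(1-\theta)/p$, i.e.\ $\theta=1-p/q$. First I would check that the hypotheses imposed here are at least as strong as those of both endpoint theorems: in case $(i)$ we assume $\sigma_1=n(1/p-1)$ and $\sigma_2>2/p$, which in particular implies the weaker requirement $\sigma_2>1/p$ of Theorem~\ref{mainthm:1}$(i)$ and coincides with the requirement $\sigma_2>2/p$ of Theorem~\ref{mainthm:4}$(i)$; in case $(ii)$ the hypothesis $n(1/p-1)<\sigma_1\le1$, $\sigma_2\ge0$ is common to both theorems. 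Hence, under either $(i)$ or $(ii)$, Theorem~\ref{mainthm:1} gives that $T_\Omega$ is bounded from $H^p(\mathbb R^n)$ into $L^p(\mathbb R^n)$, and Theorem~\ref{mainthm:4} gives that $T_\Omega$ is bounded from $WH^p(\mathbb R^n)$ into $WL^p(\mathbb R^n)$.

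Next I would recast these two endpoint estimates in the Lorentz scale. Recall from Section~5 that $H^{p,p}(\mathbb R^n)=H^p(\mathbb R^n)$, $H^{p,\infty}(\mathbb R^n)=WH^p(\mathbb R^n)$, $L^{p,p}(\mathbb R^n)=L^p(\mathbb R^n)$ and $L^{p,\infty}(\mathbb R^n)=WL^p(\mathbb R^n)$. Thus $T_\Omega$ maps the compatible couple $\big(H^{p,p},H^{p,\infty}\big)$ boundedly into the compatible couple $\big(L^{p,p},L^{p,\infty}\big)$; both couples are compatible since all the spaces involved embed continuously into $\mathscr S'(\mathbb R^n)$. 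Since $T_\Omega$ is linear, the real interpolation functor $(\cdot,\cdot)_{\theta,q}$ applies, and we obtain that $T_\Omega$ is bounded from $\big(H^{p,p},H^{p,\infty}\big)_{\theta,q}$ into $\big(L^{p,p},L^{p,\infty}\big)_{\theta,q}$. Finally I would invoke the interpolation identities $(\ref{Lorentz})$ and $(\ref{Hardy-Lorentz})$ with this value of $\theta$, namely $\big(L^{p,p},L^{p,\infty}\big)_{\theta,q}=L^{p,q}$ and $\big(H^{p,p},H^{p,\infty}\big)_{\theta,q}=H^{p,q}$, to conclude that $T_\Omega$ is bounded from $H^{p,q}(\mathbb R^n)$ into $L^{p,q}(\mathbb R^n)$, which is the desired conclusion.

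I do not expect a serious obstacle here: once the two endpoint estimates are available, the argument is a direct application of the real interpolation method. The only points that deserve a little care are (a) verifying, as above, that the present hypotheses genuinely imply those of \emph{both} Theorem~\ref{mainthm:1} and Theorem~\ref{mainthm:4} --- in particular that the threshold $\sigma_2>2/p$ in case $(i)$ is precisely what is needed so that the weak-type endpoint (the more demanding of the two) is in force --- and (b) noting that $T_\Omega$ is a genuinely linear operator, so that the interpolation theorem for linear maps between compatible quasi-Banach couples applies without the additional bookkeeping that sublinear operators would require. An entirely analogous scheme yields the corresponding Hardy--Lorentz estimates for $T_{\Omega,\alpha}$ and $\mu^{\rho}_{\Omega}$ from Theorems~\ref{mainthm:2}, \ref{mainthm:5} and Theorems~\ref{mainthm:3}, \ref{mainthm:6}, respectively.
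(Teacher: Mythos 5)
Your proposal is correct and is essentially the paper's own argument: the paper likewise deduces this theorem by applying the real interpolation identities (\ref{Lorentz}) and (\ref{Hardy-Lorentz}) to the endpoint bounds of Theorem \ref{mainthm:1} ($H^p\to L^p$) and Theorem \ref{mainthm:4} ($WH^p\to WL^p$), which is why the stronger threshold $\sigma_2>2/p$ from the weak-type endpoint appears in case $(i)$. Your additional remarks on hypothesis-checking and linearity only make explicit what the paper leaves implicit.
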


\begin{theorem}
Let $0<\alpha<n$, $n/{(n+1)}<p\leq1$, $1/q=1/p-\alpha/n$ and $\Omega(x,z)$ satisfy the $L^{\sigma_1}$-$(\log L)^{\sigma_2}$ condition $(\ref{L-logL})$. Then for any $q<s<\infty$, $T_{\Omega,\alpha}$ is bounded from $H^{p,s}(\mathbb R^n)$ into $L^{q,s}(\mathbb R^n)$ provided that $\sigma_1$ and $\sigma_2$ satisfy either of the following\\
$(i)$ $\sigma_1=n(1/q-1)+\alpha$ and $\sigma_2>1/q+\max\{1,1/q\}$;\\
$(ii)$ $n(1/q-1)+\alpha<\sigma_1\leq1$ and $\sigma_2\geq0$.
\end{theorem}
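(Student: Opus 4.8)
The plan is to derive the estimate by real interpolation, taking as the two endpoints the strong-type bound on $H^p$ from Theorem~\ref{mainthm:2} and the weak-type bound on $WH^p$ from Theorem~\ref{mainthm:5}, and then reading off the Hardy--Lorentz and Lorentz spaces as real interpolation spaces via (\ref{Hardy-Lorentz}) and (\ref{Lorentz}).

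First I would check that the hypotheses here are inherited by both Theorem~\ref{mainthm:2} and Theorem~\ref{mainthm:5}. In case $(ii)$ the assumption $n(1/q-1)+\alpha<\sigma_1\le1$, $\sigma_2\ge0$ is literally case $(ii)$ of each of those theorems. In case $(i)$ we have $\sigma_1=n(1/q-1)+\alpha$ and $\sigma_2>1/q+\max\{1,1/q\}$; since $1/q+\max\{1,1/q\}>1/q$, this implies the condition $\sigma_2>1/q$ required in case $(i)$ of Theorem~\ref{mainthm:2}, while it is exactly the condition required in case $(i)$ of Theorem~\ref{mainthm:5}. Hence, under the present hypotheses, $T_{\Omega,\alpha}$ is bounded from $H^p(\mathbb R^n)$ into $L^q(\mathbb R^n)$ and, at the same time, from $WH^p(\mathbb R^n)$ into $WL^q(\mathbb R^n)$.

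Next, using $H^{p,p}=H^p$, $H^{p,\infty}=WH^p$, $L^{q,q}=L^q$, $L^{q,\infty}=WL^q$, the previous step shows that $T_{\Omega,\alpha}$ maps the couple $(H^{p,p},H^{p,\infty})$ into the couple $(L^{q,q},L^{q,\infty})$ boundedly at both endpoints. By the interpolation theorem for linear operators on real interpolation spaces, $T_{\Omega,\alpha}$ is then bounded from $(H^{p,p},H^{p,\infty})_{\theta,s}$ into $(L^{q,q},L^{q,\infty})_{\theta,s}$ for all $0<\theta<1$ and all admissible second indices $s$. Fixing $q<s<\infty$ (so that $s>q>p$), I would invoke (\ref{Hardy-Lorentz}) to identify $(H^{p,p},H^{p,\infty})_{\theta,s}=H^{p,s}$ and (\ref{Lorentz}) with $q$ in place of $p$ to identify $(L^{q,q},L^{q,\infty})_{\theta,s}=L^{q,s}$ for the appropriate $\theta\in(0,1)$. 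Combining these with the interpolated bound gives that $T_{\Omega,\alpha}$ is bounded from $H^{p,s}(\mathbb R^n)$ into $L^{q,s}(\mathbb R^n)$ for every $q<s<\infty$, which is the claim.

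The delicate point I expect is the bookkeeping of the interpolation parameters: one must arrange that a single functor parameter $(\theta,s)$ simultaneously realizes $H^{p,s}$ from the couple $(H^{p,p},H^{p,\infty})$ and $L^{q,s}$ from the couple $(L^{q,q},L^{q,\infty})$, even though these two couples have different first indices $p\ne q$. This is precisely why $s$ is restricted to $q<s<\infty$: $s$ must exceed both first indices so that $H^{p,s}$ and $L^{q,s}$ genuinely occur as real interpolation spaces of the respective endpoint pairs. Everything else---the two endpoint mapping properties and the quoted identities (\ref{Lorentz}), (\ref{Hardy-Lorentz})---is then used verbatim, so no additional estimates are needed.
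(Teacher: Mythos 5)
Your route is exactly the paper's: the paper obtains this theorem by interpolating the $H^p\to L^q$ bound of Theorem \ref{mainthm:2} against the $WH^p\to WL^q$ bound of Theorem \ref{mainthm:5} and invoking (\ref{Lorentz}) and (\ref{Hardy-Lorentz}), and your check that hypotheses $(i)$ and $(ii)$ imply the hypotheses of both endpoint theorems is correct. However, the ``delicate point'' you flag is a genuine gap, and the restriction $s>q$ does not resolve it. The identities (\ref{Lorentz}) and (\ref{Hardy-Lorentz}) tie the second functor index to $\theta$ through the first index of the couple: $(H^{p,p},H^{p,\infty})_{\theta,s}=H^{p,s}$ forces $1/s=(1-\theta)/p$, i.e.\ $\theta=1-p/s$, while $(L^{q,q},L^{q,\infty})_{\theta,s}=L^{q,s}$ forces $1/s=(1-\theta)/q$, i.e.\ $\theta=1-q/s$. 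Since $\alpha>0$ gives $p<q$, these two values of $\theta$ differ, whereas the real interpolation theorem for a linear operator transports only the \emph{same} pair $(\theta,s)$ from the source couple to the target couple.

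Concretely, choosing $\theta=1-p/s$ so that the source is $H^{p,s}$, the target you obtain is $(L^{q,q},L^{q,\infty})_{1-p/s,\,s}$, which by monotonicity of the real method in the second index embeds into $(L^{q,q},L^{q,\infty})_{1-p/s,\,qs/p}=L^{q,qs/p}$ but not into $L^{q,s}$; so the argument as written yields $T_{\Omega,\alpha}:H^{p,s}\to L^{q,qs/p}$ with $qs/p>s$, a strictly weaker conclusion than the one claimed. (For the companion theorems about $T_{\Omega}$ and $\mu^{\rho}_{\Omega}$ the source and target couples share the same first index $p$, the two values of $\theta$ coincide, and the argument does go through; the trouble is specific to the fractional integral, where $p\neq q$.) To close the gap one would need either endpoint bounds at two different values of $p$, so that an off-diagonal Calder\'on--Marcinkiewicz-type interpolation applies, or a direct estimate via an atomic decomposition of $H^{p,s}$; neither is supplied in your proposal, and the paper's own one-sentence derivation is open to the same objection.
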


\begin{theorem}
Let $1\leq \rho<n$, $n/{(n+1)}<p\leq1$, and $\Omega(x,z)$ satisfy $(\ref{cancel})$ and the $L^{\sigma_1}$-$(\log L)^{\sigma_2}$ condition $(\ref{L-logL})$. Then for any $p<q<\infty$, $\mu^{\rho}_{\Omega}$ is bounded from $H^{p,q}(\mathbb R^n)$ into $L^{p,q}(\mathbb R^n)$ provided that $\sigma_1$ and $\sigma_2$ satisfy either of the following\\
$(i)$ $\sigma_1=n(1/p-1)$ and $\sigma_2>2/p$; \\
$(ii)$ $n(1/p-1)<\sigma_1\leq1$ and $\sigma_2\geq0$.
\end{theorem}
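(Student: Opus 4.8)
The plan is to derive this statement purely by real interpolation from the two endpoint results already in hand: Theorem \ref{mainthm:3} (strong-type boundedness $\mu^{\rho}_{\Omega}:H^p\to L^p$) and Theorem \ref{mainthm:6} (weak-type boundedness $\mu^{\rho}_{\Omega}:WH^p\to WL^p$). First I would note that, with the identifications $H^{p,p}(\mathbb R^n)=H^p(\mathbb R^n)$, $H^{p,\infty}(\mathbb R^n)=WH^p(\mathbb R^n)$, $L^{p,p}(\mathbb R^n)=L^p(\mathbb R^n)$ and $L^{p,\infty}(\mathbb R^n)=WL^p(\mathbb R^n)$ recalled in Section 5, the hypotheses $(i)$, $(ii)$ imposed here on $(\sigma_1,\sigma_2)$ are precisely those of Theorem \ref{mainthm:6}, and they are no weaker than those of Theorem \ref{mainthm:3} (in case $(i)$ one has $\sigma_2>2/p>1/p$ since $p\leq1$, and case $(ii)$ is literally the same). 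Hence, under the present assumptions, $\mu^{\rho}_{\Omega}$ is simultaneously bounded from $H^{p,p}=H^p$ into $L^{p,p}=L^p$ and from $H^{p,\infty}=WH^p$ into $L^{p,\infty}=WL^p$.

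Next I would observe that $\mu^{\rho}_{\Omega}$ is a sublinear operator: since $F^{\rho}_{\Omega,t}(x)$ depends linearly on $f$, Minkowski's inequality in $L^2(dt/t^{2\rho+1})$ yields the pointwise bound $\mu^{\rho}_{\Omega}(f+g)(x)\leq\mu^{\rho}_{\Omega}(f)(x)+\mu^{\rho}_{\Omega}(g)(x)$; moreover $H^p\subseteq WH^p$, so $\mu^{\rho}_{\Omega}$ is already well defined on the sum of the two domains, and in fact on $H^{p,q}\subseteq WH^p$ for every $q<\infty$. Then I would invoke the real interpolation theorem for sublinear operators acting on compatible couples: applied to the couple $(H^{p,p},H^{p,\infty})$ on the domain side and $(L^{p,p},L^{p,\infty})$ on the target side, it gives that $\mu^{\rho}_{\Omega}$ is bounded from $(H^{p,p},H^{p,\infty})_{\theta,q}$ into $(L^{p,p},L^{p,\infty})_{\theta,q}$ for every $0<\theta<1$ and every $0<q\leq\infty$.

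Finally, given $p<q<\infty$, I would choose $\theta=1-p/q$, so that $\theta\in(0,1)$ and $(1-\theta)/p=1/q$. By the interpolation identity $(\ref{Lorentz})$ the target space is $(L^{p,p},L^{p,\infty})_{\theta,q}=L^{p,q}$, and by the identity $(\ref{Hardy-Lorentz})$ of Abu-Shammala and Torchinsky the source space is $(H^{p,p},H^{p,\infty})_{\theta,q}=H^{p,q}$; combining these gives the boundedness of $\mu^{\rho}_{\Omega}$ from $H^{p,q}(\mathbb R^n)$ into $L^{p,q}(\mathbb R^n)$, as claimed. The only point that needs care — and the main, though standard, obstacle — is the legitimacy of applying the real interpolation functor to the non-linear operator $\mu^{\rho}_{\Omega}$ between these particular couples; this is handled through the $K$-functional formulation of real interpolation, which uses nothing beyond the subadditivity $\mu^{\rho}_{\Omega}(f+g)\leq\mu^{\rho}_{\Omega}(f)+\mu^{\rho}_{\Omega}(g)$ together with the two endpoint bounds, exactly as in the classical Marcinkiewicz interpolation argument. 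The same scheme, with Theorem \ref{mainthm:1}/Theorem \ref{mainthm:4} or Theorem \ref{mainthm:2}/Theorem \ref{mainthm:5} in place of Theorem \ref{mainthm:3}/Theorem \ref{mainthm:6}, proves the two preceding theorems of this section.
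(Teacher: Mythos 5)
Your proposal is correct and follows essentially the same route as the paper, which likewise obtains this theorem by real interpolation between the endpoint results $\mu^{\rho}_{\Omega}\colon H^{p,p}=H^p\to L^{p,p}=L^p$ (Theorem \ref{mainthm:3}) and $\mu^{\rho}_{\Omega}\colon H^{p,\infty}=WH^p\to L^{p,\infty}=WL^p$ (Theorem \ref{mainthm:6}) using the identities (\ref{Lorentz}) and (\ref{Hardy-Lorentz}). In fact you supply more detail than the paper does (the choice $\theta=1-p/q$, the comparison of the hypotheses on $(\sigma_1,\sigma_2)$, and the sublinearity needed to interpolate), since the paper simply states that the result follows from these facts.
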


\end{document}